\newtheorem{theorem}{Theorem}[section]
\newtheorem{lemma}[theorem]{Lemma}
\newtheorem{proposition}[theorem]{Proposition}
\newtheorem{corollary}[theorem]{Corollary}
\theoremstyle{definition}
\newtheorem{remark}[theorem]{Remark}
\newtheorem{example}[theorem]{Example}
\newtheorem{definition}[theorem]{Definition}
\newtheorem{open}{Open problem}
\newtheorem*{ack}{Acknowledgments}
\numberwithin{equation}{section}
\title{Variations on the capacitary inradius}
\author[Bozzola]{Francesco Bozzola}
\address[F.\ Bozzola]{DIME Dipartimento di ingegneria meccanica, energetica, gestionale e dei trasporti 
	\newline\indent
	Universit\`a di Genova
	\newline\indent
via alla Opera Pia 15, 16145 Genova, Italy}
\email{francesco.bozzola@edu.unige.it}
\author[Brasco]{Lorenzo Brasco}
\address[L.\ Brasco]{Dipartimento di Matematica e Informatica
	\newline\indent
	Universit\`a degli Studi di Ferrara
	\newline\indent
Via Machiavelli 35, 44121 Ferrara, Italy}
\email{lorenzo.brasco@unife.it}
\dedicatory{Remembering Roberto Franceschi, gone too soon}
\date{\today}
\subjclass[2010]{46E35, 35P30, 31C45}
\keywords{Poincar\'e-Sobolev inequality, inradius, capacity, Buser's inequality.}
\begin{document}

\begin{abstract}
We discuss some properties of the {\it capacitary inradius} for an open set. This is an extension of the classical concept of {\it inradius} (i.e. the radius of a largest inscribed ball), which takes into account capacitary effects. Its introduction dates back to the pioneering works of Vladimir Maz'ya. We present some variants of this object and their mutual relations, as well as their connections with Poincar\'e inequalities. We also show that, under a mild regularity assumption on the boundary of the sets, the capacitary inradius is equivalent to the classical inradius. This comes with an explicit estimate and it permits to get a Buser--type inequality for a large class of open sets, whose boundaries may have power-like cusps of arbitrary order.
Finally, we present a couple of open problems.
\end{abstract}

	\maketitle

\begin{center}
\begin{minipage}{10cm}
\small
\tableofcontents
\end{minipage}
\end{center}

\section{Introduction}

\subsection{Motivation}
In this note, we pursue our study on the {\it capacitary inradius} of open subsets of $\mathbb{R}^N$, that we started in our recent paper \cite{BozBra2}. In order to gently introduce the reader to the subject, it is certainly useful to start by recalling the definition of {\it inradius} of an open set $\Omega\subseteq\mathbb{R}^N$: this is the geometric quantity given by
\[
r_\Omega=\sup\Big\{r>0\, :\, \text{there exists a ball}\ B_r(x_0)\subseteq\Omega\Big\}.
\]
It can be seen as a simple (and quite rough) measure of ``fatness'' of an open set. In connection with functional inequalities, its importance is due to the following fact: the condition $r_\Omega<+\infty$ is {\it necessary} in order to infer the existence of a constant $c_\Omega>0$ such that
\[
c_\Omega\,\int_\Omega |\varphi|^p\,dx\le \int_\Omega |\nabla\varphi|^p\,dx,\qquad \text{for every}\ \varphi\in C^\infty_0(\Omega),
\]
where $1\le p<\infty$ (see for example \cite[Proposition 2.1]{Sou}).
Thus, the finiteness of the inradius is a necessary condition for the validity of the Poincar\'e inequality. If we define the sharp constant in the previous inequality, i.e.
\[
\lambda_p(\Omega):=\inf_{\varphi\in C^\infty_0(\Omega)} \left\{\int_\Omega |\nabla \varphi|^p\,dx\, :\, \|\varphi\|_{L^p(\Omega)}=1\right\},
\]
the previous condition can be expressed in a quantitative way through the following well-known sharp estimate
\[
\lambda_p(\Omega)\le \frac{\lambda_p(B_1)}{r_\Omega^p}.
\]
We briefly recall that in general it is not always possible to reverse the previous inequality, unless some further restrictions on the open sets are imposed. We refer to the introductions of our previous papers \cite{BozBra, BozBra2} for a list of results in this direction (see also \cite{Boz} and the references therein included). 
The crucial obstruction to the validity of a lower bound of the type
\begin{equation} \label{gallina-vecchia}
	\frac{C_{N,p}}{r_\Omega^p} \le \lambda_p(\Omega),
\end{equation}
is a ``removability issue'': roughly spealing, while $r_\Omega$ can be altered by the removal of a single point, the Poincar\'e constant $\lambda_p(\Omega)$ is only affected by the removal of ``sufficiently large'' sets. This said, it is quite easy to produce counterexamples to the validity of \eqref{gallina-vecchia}.
The previous largeness condition depends on the exponent $p$ and is expressed in terms of $p-$capacity. In this paper, we will mainly work with the so-called {\it relative $p-$capacity}, defined by
\[
\mathrm{cap}_p(\Sigma;E)=\inf_{\varphi\in C^\infty_0(E)}\left\{\int_E |\nabla \varphi|^p\,dx\, :\, \varphi \ge 1 \text{ on } \Sigma\right\},
\]
for every open bounded set $E \subseteq \mathbb{R}^N$ (typically, a ball) and every compact set $\Sigma\Subset E$.
\vskip.2cm\noindent
Under this premise, it is natural to introduce a variant of $r_\Omega$ which disregards sets having zero $p-$capacity, i.e. we wish to consider a relaxed notion of inradius which shares with $\lambda_{p}(\Omega)$ the same removable sets. This is the idea for introducing the {\it capacitary inradius} of an open set $\Omega$, whose precise definition is as follows:
 for every $0<\gamma<1$ and for every $1 \le p < \infty$, this is defined by 
\begin{equation}
\label{cap_inradius}
R_{p,\gamma}(\Omega):=\sup\Big\{r>0\, :\, \exists x_0\in\mathbb{R}^N\ \text{such that}\ \overline{B_r(x_0)}\setminus\Omega\ \text{is $(p,\gamma)-$negligible}\Big\}.
\end{equation}
Here $(p,\gamma)-$negligible (in the sense of Molchanov) means that $\overline{B_r(x_0)}\setminus\Omega$ occupies a portion of a reference concentric ball, say $B_{2r}(x_0)$, which is at most $\gamma$ in the sense of $p-$capacity. More precisely, we have 
\[
\mathrm{cap}_p\left(\overline{B_r(x_0)}\setminus\Omega;B_{2r}(x_0)\right)\le \gamma\,\mathrm{cap}_p\left(\overline{B_r(x_0)};B_{2r}(x_0)\right).
\]
The quantity \eqref{cap_inradius} has been explicitly introduced in our recent paper \cite{BozBra2}, by drawing inspiration both from the
{\it interior capacitary radius} used by Maz'ya and Shubin in \cite{MS} 
and from the {\it inner cubic diameter} (see \cite[Definition 14.2.2]{Maz}), which has been extensively used by Maz'ya. 
We postpone to the following section a more detailed discussion about the comparison between these capacitary variants of the inradius. 
\begin{remark}
The extremal cases $\gamma=1$ and $\gamma=0$ deserve a comment: in the first case, every set would be $(p,\gamma)-$negligible and thus the corresponding capacitary inradius would be $+\infty$ for every open set. The case $\gamma=0$ is more interesting: in this case, the quantity $R_{p,0}(\Omega)$ would give the radius of the largest ball contained in $\Omega$, {\it up to a set of zero $p-$capacity}.
However, this capacitary inradius would not be strong enough to permit the lower bound
\[
\left(\frac{1}{R_{p,0}(\Omega)}\right)^p\lesssim \lambda_p(\Omega),
\]
see \cite[Example A.1]{BozBra2} for a counter-example.
\end{remark}
On the contrary, when $0<\gamma<1$ one can prove that $R_{p,\gamma}(\Omega)$ has the desired property: it is actually equivalent to $\lambda_p(\Omega)$. More precisely, in \cite[Main Theorem]{BozBra2} we proved the following two-sided estimate
\begin{equation}
\label{capin}
\sigma_{N,p}\,\gamma\,\left(\frac{1}{R_{p,\gamma}(\Omega)}\right)^p\le \lambda_p(\Omega)\le C_{N,p,\gamma}\,\left(\frac{1}{R_{p,\gamma}(\Omega)}\right)^p,
\end{equation}
which holds {\it for every} $0<\gamma<1$ and within the range $1 \le p \le N$. Actually, a related result holds for the sharp Poincar\'e-Sobolev constants (see \cite[Theorem 6.1]{BozBra2})
\[
\lambda_{p, q}(\Omega)=\inf_{\varphi\in C^\infty_0(\Omega)} \left\{\int_\Omega |\nabla \varphi|^p\,dx\, :\, \|\varphi\|_{L^q(\Omega)}=1\right\},
\]
each time $q>p$ is a {\it subcritical} (in the sense of Sobolev embeddings) exponent.  
\begin{remark}[The case $p>N$]
The regime $p>N$ is less interesting. In this case, points have positive $p-$capacity and thus they are not removable sets. Accordingly, one can prove that \eqref{gallina-vecchia} holds for every open sets\footnote{This result has been proved and re-proved various times, with slightly different proofs and different estimates of the constant $C_{N,p}$.
However, it seems correct to attribute it to Maz'ya, see for example \cite[Theorem 11.4.1]{Maz85}.}.
We also mention that in \cite[Proposition 7.3]{BozBra2} it is shown that $R_{p,\gamma}(\Omega)=r_\Omega$, when $p>N$ and $0<\gamma\le \gamma_0$, for an explicit (and optimal) parameter $\gamma_0$. 
For these reasons, in this paper we will confine our discussion to the case $p\le N$.
\end{remark}
\subsection{Description of the results}

In the first part of the paper, we want to consider some variants of $R_{p,\gamma}$ and discuss their mutual relations. We will focus especially on two different kinds of variants:
\vskip.2cm
\begin{itemize}
\item[(A)] changing the {\it metric}, i.e. replacing balls in the definition of $R_{p,\gamma}$ with a family of rescaled copies of a more general fixed ``shape'';
\vskip.2cm
\item[(B)] changing the {\it capacity}, i.e. replacing the relative $p-$capacity with other notions of $p-$capacity. 
\end{itemize}
\vskip.2cm
The first question may look a bit academic, but actually it is not: indeed, the first appearing of the capacitary inradius can be traced back in some works by Maz'ya from the '70s of the 20th century, as summarized in his books \cite{Maz,Maz85}. In these works, the capacitary inradius is defined by using {\it cubes} in place of {\it balls}: it is precisely the {\it inner cubic diameter} mentioned above (see \cite[Definition 10.2.2]{Maz85} and \cite[Definition 14.2.2]{Maz}). It is crucially exploited in order to derive necessary and sufficient conditions for the validity of Poincar\'e and Poincar\'e-Sobolev inequalities on open subsets of $\mathbb{R}^N$.
The use of cubes has the advantage that they can tile the space, differently from balls; on the other hand, by using balls it is simpler to prove a two-sided estimate like \eqref{capin} valid {\it for every} $0<\gamma<1$, as shown in \cite{MS} (for $p=2<N$)
and in our paper \cite{BozBra2} (general case). It is thus natural to inquire to which extent the shape of the chosen ``ball'' can affect this type of estimates. It should be also remarked that this kind of generalization has been considered already in \cite[Chapter 18]{Maz} (for the case $p=2$). 
\par
As for point (B), we remark that we do not perversely consider any possible variant one could imagine: rather, our interest is to consider a couple of alternative notions already existing in the literature. The first one is what we call the {\it Maz'ya-Shubin capacitary inradius}, used in \cite{MS}, a paper which very much inspired \cite{BozBra2}. By referring to Definition \ref{def:ms-capin} for its precise form, we point out that the main difference with $R_{p,\gamma}$ is the use of the following {\it absolute} $p-$capacity
\[
\mathrm{cap}_p(\Sigma)=\inf_{\varphi\in C^\infty_0(\mathbb{R}^N)} \left\{\int_{\mathbb{R}^N} |\nabla \varphi|^p\,dx\, :\, \varphi\ge 1\ \text{on}\ \Sigma\right\}.
\] 
As already explained in the Introduction of \cite{BozBra2}, the main drawback of this notion is that it does not
permit to consider the limit case $p=N$.
\par
The second variant is the {\it Gallagher capacitary inradius}, recently considered in \cite{Ga1, Ga2}. Apart for the use of yet another notion of $p-$capacity (which permits to include the case $p=N$, as well), the main difference is that this inradius {\it is not} defined in terms of a negligibility condition {\it \`a la} Molchanov. Rather, it can be regarded as a sort of ``endpoint'', i.e. it coincides with the limit of $R_{p,\gamma}$ as the negligibility parameter $\gamma$ goes to $0$ (see Proposition \ref{prop:GvsUs}). This has already been observed by Gallagher in \cite{Ga2}. Thus, even if this inradius can be used to characterize the validity of Poincar\'e inequality, it {\it can not} be directly used to provide two-sided estimates like \eqref{capin} (more precisely, the lower bound fails, see Example \ref{exa:0fesso}).
\vskip.2cm\noindent
The second part of the paper is focused on answering the following natural question: {\it under which conditions on the open set, the capacitary inradius $R_{p,\gamma}(\Omega)$ can be compared with the classical inradius $r_\Omega$?}
In Section \ref{sec:5}, we will show how a simple {\it measure density condition} on the open sets imply that 
\[
r_\Omega\le R_{p,\gamma}(\Omega)\lesssim r_\Omega,\qquad \text{for every}\ 0<\gamma<1.
\]
Thus, the capacitary inradius $R_{p,\gamma}(\Omega)$ turns out to be equivalent to the usual inradius for these sets. This is the content of Theorem \ref{prop:meas-density-ball} below. Our measure density condition amounts to require the following
\[
\theta^*_{\Omega,r_0}(t):=\inf\left\{\left(\frac{r_0}{r}\right)^t\,\dfrac{|B_r(x)\setminus \Omega|}{|B_r(x)|}\, : x\in \partial \Omega,\ 0 < r \le r_0\right\}>0,
\]
for some $r_0>0$ and $t\ge 0$.
Such a condition is certainly not the sharpest possible assumption on the open sets, in order to have such an equivalence. However, we think that the result is quite interesting for two reasons: the condition is sufficiently general to encompass a large class of open sets (we admit power-like cusps of arbitrary order for their boundaries); at the same time, it is easy to check. We refer to Lemma \ref{lm:funnel} for a large class of open sets which satisfy it.
Moreover, the equivalence between $R_{p,\gamma}(\Omega)$ and $r_\Omega$ comes with an explicit estimate. 
\par
When combined with the lower bound in \eqref{capin}, this estimate in turn permits to infer the validity of \eqref{gallina-vecchia}
for this class of sets, with a precise control on the constant involved. At the same price, we can get the same result also for the Poincar\'e-Sobolev constants $\lambda_{p,q}$, see Corollary \ref{coro:kroffo}. 
As a remarkable consequence, we can obtain a {\it Buser-type inequality} for open sets satisfying the previous measure density condition. More precisely, we get 
\begin{equation}
\label{busa}
\lambda(\Omega)\le C\,\Big(h(\Omega)\Big)^2,
\end{equation}
where $C>0$ is constant depending only on $N$ and the measure density index $\theta_{\Omega,r_0}^*(t)$ (in an explicit way) and:
\begin{itemize}
\item $\lambda(\Omega)$ is the bottom of the spectrum of the Dirichlet-Laplacian on $\Omega$, that is
\[
\lambda(\Omega):=\inf_{\varphi\in C^\infty_0(\Omega)} \left\{\int_\Omega |\nabla \varphi|^2\,dx\, :\, \|\varphi\|_{L^2(\Omega)}=1\right\};
\]
\vskip.2cm
\item $h(\Omega)$ is the {\it Cheeger constant of $\Omega$}, defined by
\[
h(\Omega)=\inf\left\{\frac{\mathcal{H}^{N-1}(\partial E)}{|E|}\, :\, E\Subset \Omega\ \text{open set with smooth boundary}\right\}.
\]
\end{itemize}
We refer to Corollary \ref{coro:buser} for the precise statement. 
\par
We conclude by recalling that for general open sets it is not possible to have an estimate like \eqref{busa}: we refer to \cite[Chaper 4, Section 3]{Maz} for a counter-example and to \cite{BozBra, Bra, Par} for some positive results, under suitable topological/geometric assumptions on the open sets. Inequality \eqref{busa} is sometimes also called {\it reverse Cheeger's inequality} (see for example \cite{Bu,Bu2, Le2} and \cite{Le} for this result in the context of Riemannian manifolds). The motivation for this terminology is easily understood, once we recall the following {\it Cheeger inequality}
\begin{equation}
\label{ciga}
\left(\frac{h(\Omega)}{2}\right)^2\le \lambda(\Omega),
\end{equation}
see \cite[equation (4.2.5)]{Maz}, which holds for {\it every} open set $\Omega\subseteq\mathbb{R}^N$.

\subsection{Plan of the paper}
In Section \ref{sec:2}, we settle the notation and state some basic facts which will be repeatedly used throughout the paper. In Section \ref{sec:3}, we compare our capacitary inradius with the variant obtained by considering a more general class of test ``shapes", other than balls. An interesting open problem is presented, as well. Section \ref{sec:4} 
is still devoted to comparison estimates with other notions of capacitary inradius, this time obtained by considering different capacities. These variants includes the one considered by Maz'ya and Shubin in \cite{MS} and the one considered by Gallagher in \cite{Ga1, Ga2}. This section is complemented with examples and remarks, aimed at clarifying some substantial differences between $R_{p, \gamma}$ and the inradius considered in \cite{Ga1, Ga2}. In the final section, i.e. Section \ref{sec:5}, we state and prove our main result, about the comparison between the capacitary inradius and the classical one, under a measure density condition (see Theorem \ref{prop:meas-density-ball}). We also briefly discuss some consequences which can be drawn from it. 
\begin{ack}
We are grateful to Vladimir Bobkov for drawing our attention to the papers \cite{Ga1, Ga2}. 
F.\,B. is a member of the {\it Gruppo Nazionale per l'Analisi Matematica, la Probabilit\`a
e le loro Applicazioni} (GNAMPA) of the Istituto Nazionale di Alta Matematica (INdAM) and partially supported by the ``INdAM - GNAMPA Project Ottimizzazione Spettrale, Geometrica e Funzionale", CUP E5324001950001. This paper has been finalized during the {\sc XXXIV Convegno Nazionale di Calcolo delle Variazioni}, held in Riccione in February 2025.
\end{ack}

\section{Preliminaries}
\label{sec:2}
Unless differently stated, in this paper we will always take the dimension to be $N\ge 2$.
For a point $x_0\in\mathbb{R}^N$ and a positive real number $R$, we will indicate by $B_R(x_0)$ the $N-$dimensional open ball, centered at $x_0$ and with radius $R$, i.e.
\[
B_R(x_0)=\Big\{x\in\mathbb{R}^N\, :\, |x-x_0|<R\Big\}.
\]
When the center $x_0$ coincides with the origin, we will simply write $B_R$. We indicate by $\omega_N$ the volume of the $N-$dimensional ball having radius $1$. We also denote by $Q_R(x_0)$ the $N-$dimensional open hypercube centered at $x_0$, given by 
\[
Q_R(x_0)=\prod_{i=1}^N (x_0^i-R,x_0^i+R),\qquad \text{where}\ x_0=(x_0^1,\dots,x_0^N).
\]
We recall the definition of $p-$capacity we wish to work with.
\begin{definition}
\label{defi:capacity}
Let $1\le p<\infty$, for every $E\subseteq\mathbb{R}^N$ open set and every $\Sigma\subseteq E$ compact set, we define the {\it $p-$capacity of $\Sigma$ relative to $E$} as follows
\[
\mathrm{cap}_p(\Sigma;E)=\inf_{\varphi\in C^\infty_0(E)}\left\{\int_E |\nabla \varphi|^p\,dx\, :\, \varphi \ge 1 \text{ on } \Sigma\right\}.
\]
By using standard approximation methods, it is not difficult to see that this infimum is unchanged, if we replace $C^\infty_0(E)$ by the space of Lipschitz functions, compactly supported in $E$.
\end{definition}
We refer to \cite{Maz} for a comprehensive study on the properties of the various notions of $p-$capacity used in this paper. We also mention the paper \cite{Ho} for a succint presentation of the subject, in the case of the relative $p-$capacity introduced above.
\vskip.2cm 
From its definition, it is easy to see that we have the following monotonicity relations
\begin{equation}
\label{monotone}
\mathrm{cap}_p(\Sigma_0;E)\le \mathrm{cap}_p(\Sigma_1;E),\qquad \text{if}\ \Sigma_0\subseteq\Sigma_1\Subset E,
\end{equation}
and
\[
\mathrm{cap}_p(\Sigma;E_1)\le \mathrm{cap}_p(\Sigma;E_0),\qquad \text{if}\ \Sigma\Subset E_0\subseteq E_1,
\]
that will be used repeatedly. 
\par
We also recall the following explicit formula for the $p-$capacity of concentric balls: they can be found for example in \cite[page 148]{Maz}. We limit ourselves to the case $1\le p\le N$, which will be the most interesting one:
\begin{equation}
\label{eqn:cap-ball0}
\mathrm{cap}_1\left(\overline{B_r};B_{R}\right)=N\,\omega_N\,r^{N-1},
\end{equation}
\begin{equation}
\label{eqn:cap-ball}
\mathrm{cap}_p\left(\overline{B_r};B_{R}\right)=N\,\omega_N\, \left(\frac{N-p}{p-1}\right)^{p-1}\,\frac{r^{N-p}}{\left(1-\left(\dfrac{r}{R}\right)^\frac{N-p}{p-1}\right)^{p-1}}, \qquad \text{ if } 1<p<N,
\end{equation}
and 
\begin{equation}
\label{eqn:cap-ballN}
\mathrm{cap}_N\left(\overline{B_r};B_{R}\right)=N\, \omega_N\, \left(\log \left(\frac{R}{r}\right)\right)^{1-N}.
\end{equation}
The next simple result is well-known (see for example \cite[Corollary 2.3.4]{Maz}), we recall its proof for the reader's convenience. It will be crucially exploited in the sequel.
\begin{lemma}
\label{lemma:2}
Let $E\subseteq\mathbb{R}^N$ be an open set and let $\Sigma\subseteq E$ be a compact set. For every $1\le p<\infty$ we have 
\[
|\Sigma|\,\lambda_p(E)\le \mathrm{cap}_p(\Sigma;E).
\]
\end{lemma}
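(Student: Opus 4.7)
The plan is to exploit the two variational problems directly: use a capacity-admissible $\varphi$ to test the Poincaré quotient, and then take the infimum. The only potential subtlety is making sure the pointwise inequality $\varphi \ge 1$ on $\Sigma$ can be translated into an $L^p$ lower bound over $\Sigma$, which is straightforward.

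More concretely, I would fix an arbitrary admissible competitor in $\mathrm{cap}_p(\Sigma;E)$, i.e.\ some $\varphi\in C^\infty_0(E)$ with $\varphi\ge 1$ on $\Sigma$. Since $\Sigma\subseteq E$, this gives at once
\[
|\Sigma|=\int_\Sigma 1\,dx\le \int_\Sigma |\varphi|^p\,dx\le \int_E |\varphi|^p\,dx,
\]
using $|\varphi|^p\ge \varphi^p\ge 1$ on $\Sigma$. By the very definition of the sharp Poincaré constant $\lambda_p(E)$, the admissible function $\varphi$ also satisfies
\[
\lambda_p(E)\int_E |\varphi|^p\,dx\le \int_E |\nabla \varphi|^p\,dx.
\]

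Combining these two inequalities yields
\[
|\Sigma|\,\lambda_p(E)\le \lambda_p(E)\int_E |\varphi|^p\,dx\le \int_E|\nabla \varphi|^p\,dx,
\]
and taking the infimum over all such $\varphi$ produces exactly the claimed bound $|\Sigma|\,\lambda_p(E)\le \mathrm{cap}_p(\Sigma;E)$. I do not foresee a genuine obstacle here; the only thing worth flagging is the harmless observation that one does not need any truncation of $\varphi$, because the defining sign condition $\varphi\ge 1$ on $\Sigma$ already forces $|\varphi|^p\ge 1$ pointwise on $\Sigma$, and the finiteness of $\lambda_p(E)$ can be assumed without loss of generality (otherwise the inequality is trivial, since $\mathrm{cap}_p(\Sigma;E)$ is then automatically infinite or the statement is vacuous for finite right-hand side).
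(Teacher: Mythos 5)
Your proof is correct and follows the same argument as the paper: test the Poincar\'e inequality with an admissible capacity competitor $\varphi$, use $|\varphi|^p\ge 1$ on $\Sigma$ to bound $|\Sigma|$ from above by $\int_E|\varphi|^p\,dx$, and take the infimum over $\varphi$. The paper likewise dispenses with the case $\lambda_p(E)=0$ as trivial, so there is nothing to add.
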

\begin{proof}
We can suppose that $\lambda_p(E)>0$, otherwise there is nothing to prove. Let $\varphi\in C^\infty_0(E)$ be such that $\varphi\ge 1$ on $\Sigma$. Then, we have 
\[
\lambda_p(E)\,|\Sigma|\le \lambda_p(E)\,\int_\Sigma |\varphi|^p\,dx\le \lambda_p(E)\, \int_E |\varphi|^p\,dx\le \int_E |\nabla \varphi|^p\,dx.
\]
Taking the infimum over $\varphi$, we get the claim.
\end{proof}
The following property immediately follows from the explicit expressions \eqref{eqn:cap-ball0}, \eqref{eqn:cap-ball} and \eqref{eqn:cap-ballN}.  
\begin{lemma}
\label{lemma:3}
Let $1\le p\le N$. For every $\alpha>1$, the function
\[
f(r)=\mathrm{cap}_p\left(\overline{B_r(x_0)};B_{\alpha\,r}(x_0)\right),\qquad \text{for}\ r>0.
\]
is non-decreasing.
\end{lemma}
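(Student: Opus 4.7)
The statement indicates the explicit formulas \eqref{eqn:cap-ball0}, \eqref{eqn:cap-ball}, \eqref{eqn:cap-ballN} can be used directly. My plan is to substitute $R=\alpha\,r$ into each of the three cases and observe that the resulting expression factors as a constant (depending only on $N$, $p$, $\alpha$) times a non-negative power of $r$.

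More concretely: by translation invariance of $\mathrm{cap}_p$, we can assume $x_0=0$. For $p=1$, formula \eqref{eqn:cap-ball0} gives $f(r)=N\,\omega_N\,r^{N-1}$, which is strictly increasing in $r$ (since $N\ge 2$). For $1<p<N$, setting $R=\alpha\,r$ in \eqref{eqn:cap-ball} eliminates the $r$-dependence of the denominator, which collapses to a positive constant
\[
\left(1-\alpha^{-\frac{N-p}{p-1}}\right)^{p-1}>0,
\]
(positive thanks to $\alpha>1$ and $N>p$); hence $f(r)=c(N,p,\alpha)\,r^{N-p}$ with $N-p>0$. For $p=N$, formula \eqref{eqn:cap-ballN} yields $f(r)=N\,\omega_N\,(\log\alpha)^{1-N}$, which is constant in $r$ (hence trivially non-decreasing). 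In every case $f$ is non-decreasing.

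A conceptually cleaner alternative, which I would mention as a remark, is to avoid the explicit formulas altogether and argue by the scaling property of the relative $p$-capacity. The change of variables $y=\lambda\,x$ applied to any admissible $\varphi\in C^\infty_0(B_{\alpha r}(x_0))$ with $\varphi\ge 1$ on $\overline{B_r(x_0)}$ produces $\psi(x)=\varphi(x/\lambda)$ which is admissible for the pair $(\overline{B_{\lambda r}(\lambda x_0)}, B_{\alpha\lambda r}(\lambda x_0))$, with $\int|\nabla \psi|^p\,dx=\lambda^{N-p}\int|\nabla \varphi|^p\,dy$. Taking the infimum (and using the reverse scaling for the opposite inequality) gives
\[
f(\lambda\,r)=\lambda^{N-p}\,f(r),\qquad \text{for every}\ \lambda>0,
\]
and the conclusion follows since $N\ge p$.

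I do not expect any real obstacle here: the argument is essentially a one-line consequence of the formulas, and the only thing to be mindful of is the positivity of the denominator in \eqref{eqn:cap-ball} (guaranteed by $\alpha>1$) and the fact that the condition $p\le N$ ensures the exponent of $r$ is non-negative in every case.
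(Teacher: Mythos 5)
Your proof is correct and follows exactly the route the paper intends: the paper gives no written proof, stating only that the lemma ``immediately follows from the explicit expressions'' \eqref{eqn:cap-ball0}, \eqref{eqn:cap-ball} and \eqref{eqn:cap-ballN}, which is precisely your substitution $R=\alpha\,r$ yielding $f(r)=c(N,p,\alpha)\,r^{N-p}$ in each case. Your alternative scaling argument is also valid and self-contained, but it is not needed here.
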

\section{Different metrics}
\label{sec:3}
Following \cite[Definition 18.1]{Maz}, we want to consider a variant of our capacitary inradius $R_{p,\gamma}$, where we replace balls with more general ``shapes''. We first need to define the kind of ``shapes'' we wish to consider.
\par
We say that an open bounded set $\mathcal{K} \subseteq \mathbb{R}^N$ is a {\it standard body} if it is starshaped with respect to the ball $B_1$.
Such a set $\mathcal{K}$ can be described as follows
\[
	\mathcal{K} = \{x = r\, \omega \in \mathbb{R}^N\,:\, \omega \in \mathbb{S}^{N-1},\, 0 \le r < r(\omega)\},
\]
for a Lipschitz function $r: \mathbb{S}^{N-1} \to (0, \infty)$, the so--called {\it radial function of $\mathcal{K}$}, given by 
\[
r(\omega) := \sup\{r \ge 0\ : \ r\,\omega \in \mathcal{K}\}, \qquad \mbox{for every}\ \omega \in \mathbb{S}^{N-1},
\]
see \cite[Lemma 1.1.8]{Maz}.
Moreover, we have
\[
	r(\omega)\,\omega \in \partial \mathcal{K} \qquad \mbox{ and } \qquad 1\le r(\omega) \le R_\mathcal{K}, \qquad \mbox{ for every } \omega \in \mathbb{S}^{N-1},
\]
where we set 
\[
R_\mathcal{K} := \displaystyle \max_{x \in \partial \mathcal{K}} |x| = \max_{\omega \in \mathbb{S}^{N-1}} r(\omega).
\]
Observe that we used that $B_1 \subseteq \mathcal{K}$, by our assumption. 
\par
For every $r > 0$ and for every $x_0 \in \mathbb{R}^N$, we denote by $\mathcal{K}_r(x_0)$ the open set obtained from $\mathcal{K}$ by combining an homothety (centered at $0$) with coefficient $r$ and a translation by the vector $x_0$, that is
\[
\mathcal{K}_r(x_0):= \left\{x \in \mathbb{R}^N\,:\,\frac{x-x_0}{r} \in \mathcal{K}\right\}.
\]
When the ``center'' $x_0$ coincides with the origin, we will simply write $\mathcal{K}_r$ in place of $\mathcal{K}_{r}(x_0)$. For $r=1$ we get back the original shape, thus we will write $\mathcal{K}$ in place of $\mathcal{K}_1$.
\par
This family of sets will replace the balls $B_r(x_0)$ in the capacitary inradius we are going to define. 
Indeed, we have the following definition, which is essentially contained in \cite[Chapter 18]{Maz}.
\begin{definition}
Let $1\le p<\infty$ and $0 < \gamma < 1$. A compact set $\Sigma\subseteq \overline{\mathcal{K}_r(x_0)}$ is said to be {\it $(p,\gamma)-$negligible relative to $\mathcal{K}$} if 
\[
\mathrm{cap}_p(\Sigma;\mathcal{K}_{2r}(x_0))\le \gamma\,\mathrm{cap}_p\left(\overline{\mathcal{K}_r(x_0)};\mathcal{K}_{2r}(x_0)\right).
\]
For an open set $\Omega \subseteq \mathbb{R}^N$ we define its {\it capacitary inradius relative to $\mathcal{K}$} as 
\begin{equation*}
	R_{p, \gamma}(\Omega; \mathcal{K}):= \sup\Big\{r>0\, :\, \exists x_0\in\mathbb{R}^N\ \text{such that}\ \overline{\mathcal{K}_r(x_0)}\setminus\Omega\ \text{is $(p,\gamma)-$negligible relative to}\ \mathcal{K}\Big\}. 
\end{equation*}
\end{definition}
A couple of comments are in order, concerning the previous definition.
\begin{remark}
A particularly interesting instance is when $\mathcal{K}$ coincides with the unit ball of a norm on $\mathbb{R}^N$.
For example, by choosing $\mathcal{K}=B_1$ the Euclidean ball, one gets back our initial definition. On the other hand, by choosing the hypercube
\[
\mathcal{K}=Q_1(0)=(-1,1)^N,
\]
i.e. the unit ball of the norm $\|\cdot\|_{\ell^\infty}$, 
the quantity $R_{p, \gamma}(\Omega; \mathcal{K})$ coincides with the so-called {\it inner cubic diameter}, extensively used by Maz'ya in his works, as recalled in the Introduction.
\end{remark}
\begin{remark}
We point out that the requirement on the standard body $\mathcal{K}$ to be starshaped with respect {\it to a ball} is needed in order to guarantee that $\mathcal{K}_r(x_0)$ is compactly contained in $\mathcal{K}_{2r}(x_0)$. In this way, the quantity $\mathrm{cap}_p(\overline{\mathcal{K}_r(x_0)};\mathcal{K}_{2r}(x_0))$ is well-defined.
\end{remark}
\begin{lemma} \label{lm:cap_palleVSbody}
Let $1\le p<\infty$, for every compact set $\Sigma\subseteq B_\varrho(x_0)$ we have
	\begin{equation}
	\label{francesco}
		\,\mathrm{cap}_p(\Sigma; B_\varrho(x_0)) \leq\left(\dfrac{\varrho}{\mathrm{dist}(\Sigma,\partial B_\varrho(x_0))}\,\frac{1}{\lambda_{p}(\mathcal{K})^{\frac{1}{p}}}+1\right)^{p}\, \mathrm{cap}_p(\Sigma; \mathcal{K}_\varrho(x_0)). 
	\end{equation}
\end{lemma}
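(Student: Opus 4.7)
The plan is to take any admissible test function $\varphi\in C^\infty_0(\mathcal{K}_\varrho(x_0))$ for the capacity on the right-hand side (that is, $\varphi\ge 1$ on $\Sigma$), and to cut it off so as to produce a Lipschitz competitor for the capacity relative to $B_\varrho(x_0)$. Note this is a sensible direction of comparison because $B_1\subseteq\mathcal{K}$ forces $B_\varrho(x_0)\subseteq\mathcal{K}_\varrho(x_0)$, so test functions on the larger set simply need to be squeezed into the smaller one. Setting $d:=\mathrm{dist}(\Sigma,\partial B_\varrho(x_0))>0$, I would take the standard Lipschitz cutoff
\[
\eta(x):=\min\left\{1,\,\frac{\mathrm{dist}(x,\mathbb{R}^N\setminus B_\varrho(x_0))}{d}\right\},
\]
which vanishes outside $B_\varrho(x_0)$, equals $1$ on $\Sigma$ (since each point of $\Sigma$ has distance at least $d$ from $\partial B_\varrho(x_0)$), and satisfies $|\nabla\eta|\le 1/d$ a.e. Then $\eta\,\varphi$ is Lipschitz, compactly supported in $B_\varrho(x_0)$, and $\ge 1$ on $\Sigma$, hence admissible for $\mathrm{cap}_p(\Sigma;B_\varrho(x_0))$ by the remark in Definition \ref{defi:capacity}.

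Next, I would compute $\nabla(\eta\varphi)=\eta\,\nabla\varphi+\varphi\,\nabla\eta$ and apply Minkowski's inequality in $L^p(B_\varrho(x_0))$:
\[
\|\nabla(\eta\varphi)\|_{L^p(B_\varrho(x_0))}\le \|\eta\,\nabla\varphi\|_{L^p(B_\varrho(x_0))}+\|\varphi\,\nabla\eta\|_{L^p(B_\varrho(x_0))}.
\]
The first summand is bounded by $\|\nabla\varphi\|_{L^p(\mathcal{K}_\varrho(x_0))}$ since $0\le\eta\le 1$ and $B_\varrho(x_0)\subseteq\mathcal{K}_\varrho(x_0)$. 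For the second, the estimate $|\nabla\eta|\le 1/d$ together with the Poincar\'e inequality on $\mathcal{K}_\varrho(x_0)$ (and the scaling identity $\lambda_p(\mathcal{K}_\varrho(x_0))=\varrho^{-p}\,\lambda_p(\mathcal{K})$) gives
\[
\|\varphi\,\nabla\eta\|_{L^p(B_\varrho(x_0))}\le \frac{1}{d}\,\|\varphi\|_{L^p(\mathcal{K}_\varrho(x_0))}\le \frac{\varrho}{d\,\lambda_p(\mathcal{K})^{1/p}}\,\|\nabla\varphi\|_{L^p(\mathcal{K}_\varrho(x_0))}.
\]

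Combining the two bounds, raising to the $p$-th power, and taking the infimum over admissible $\varphi$ yields \eqref{francesco}. There is no real obstacle: the argument is a standard ``cutoff plus Poincar\'e'' sandwich. The only delicate point is the choice of cutoff, where the sharp Lipschitz bound $|\nabla\eta|\le 1/d$ (rather than some worse dimensional multiple) is what produces the clean additive $+1$ in the prefactor of \eqref{francesco}; using a smooth mollified version would cost an extra factor that cannot be absorbed into the stated form.
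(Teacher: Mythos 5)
Your argument is essentially the paper's own proof: cut off an admissible test function for $\mathrm{cap}_p(\Sigma;\mathcal{K}_\varrho(x_0))$ by a distance-type Lipschitz function of slope $1/d$, then apply Minkowski's inequality together with the scaled Poincar\'e inequality $\lambda_p(\mathcal{K}_\varrho(x_0))=\varrho^{-p}\lambda_p(\mathcal{K})$. The one point to repair is that your $\eta$ is supported on all of $\overline{B_\varrho(x_0)}$, so $\eta\,\varphi$ need not be \emph{compactly} supported in the open ball (the support of $\varphi\in C^\infty_0(\mathcal{K}_\varrho(x_0))$ may well reach $\partial B_\varrho(x_0)$); the paper handles this by using a cutoff $\eta_\delta$ vanishing outside $B_{\varrho-\delta}(x_0)$ with Lipschitz constant $1/(d-\delta)$ and letting $\delta\to 0$ at the end, which costs nothing in the final constant --- so, contrary to your closing remark, this regularization does not spoil the clean additive $+1$.
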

\begin{proof}
We first observe that if $\Sigma\subseteq B_\varrho(x_0)$, then we have $\Sigma\subseteq \mathcal{K}_{\varrho}(x_0)$, as well. This simply follows from the fact that
\[
B_\varrho(x_0)\subseteq \mathcal{K}_\varrho(x_0),
\]
thanks to the fact that the standard body $\mathcal{K}$ is starshaped with respect to $B_1$. 
\par
We set for simplicity $d=\mathrm{dist}(\Sigma,\partial B_\varrho(x_0))$, then we have $\Sigma\subseteq B_{\varrho-d}(x_0)$. For every $0<\delta<d$, we define the cut-off function 
	\[
	\eta_\delta(x) := \min\left\{\left(\dfrac{(\varrho- \delta) - |x-x_0|}{d- \delta}\right)_{+},\, 1\right\}. 
	\]
	In particular, $\eta_\delta$ is a Lipschitz function with compact support in $B_{\varrho}(x_0)$ and 
	\[
	\eta_\delta = 1 \quad \mbox{ on } \overline{B_{\varrho-d}(x_0)}, \qquad |\nabla \eta_\delta| \leq \frac{1}{d - \delta}, \qquad \eta_\delta = 0 \quad \mbox{ on } \mathbb{R}^N \setminus B_{\varrho- \delta}(x_0).
	\]  
We take $u\in C^\infty_0(\mathcal{K}_\varrho(x_0))$ such that $u\ge 1$ on $\Sigma$.
Then, by the definition of relative capacity and Minkowski's inequality, we have \[
	\begin{split}
		\Big(\mathrm{cap}_p(\Sigma; B_{\varrho}(x_0))\Big)^{\frac{1}{p}} &\leq \|\nabla(u\,\eta_\delta)\|_{L^p(B_{\varrho}(x_0))} \\
		&\leq \frac{1}{d-\delta}\,\|u\|_{L^p(\mathcal{K}_{\varrho}(x_0))} + \|\nabla u\|_{L^p(\mathcal{K}_{\varrho}(x_0))} \\
		&\leq \left(\frac{1}{d - \delta}\,\frac{1}{\lambda_p(\mathcal{K}_{\varrho}(x_0))^{\frac{1}{p}}} + 1\right)\,\|\nabla u\|_{L^p(\mathcal{K}_{\varrho}(x_0))},
	\end{split}
	\] 
	where in the last line we used Poincar\'e inequality on the bounded set $\mathcal{K}_{\varrho}(x_0)$. By sending $\delta$ to $0$ and by the arbitrariness of $u$, we then obtain the claimed estimate.
\end{proof}
The previous result has a sort of converse.
\begin{lemma} \label{lm:ball-VS-body}
Let $1\le p<\infty$, for every compact set $\Sigma\subseteq \mathcal{K}_\varrho(x_0)$ we have
\[
\mathrm{cap}_p(\Sigma; \mathcal{K}_{\varrho}(x_0)) \leq  \left(\frac{\varrho}{\mathrm{dist}(\Sigma,\partial \mathcal{K}_\varrho(x_0))}\,\frac{R_\mathcal{K}}{\lambda_{p}(B_1)^{\frac{1}{p}}} + 1\right)^p\,\mathrm{cap}_p(\Sigma; B_{R_\mathcal{K}\varrho}(x_0)).
\]
\end{lemma}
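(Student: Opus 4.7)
The proof is meant to mirror Lemma \ref{lm:cap_palleVSbody}, with the roles of the ball $B_\varrho(x_0)$ and the body $\mathcal K_\varrho(x_0)$ interchanged. First I would check that the right-hand side is meaningful: since $B_1\subseteq\mathcal K$ and $r(\omega)\le R_\mathcal K$ for every $\omega\in\mathbb S^{N-1}$, every $x\in\mathcal K_\varrho(x_0)$ satisfies
\[
|x-x_0|<r\!\left(\frac{x-x_0}{|x-x_0|}\right)\varrho\le R_\mathcal K\,\varrho,
\]
so $\mathcal K_\varrho(x_0)\subseteq B_{R_\mathcal K\varrho}(x_0)$, and hence the compact set $\Sigma\subseteq\mathcal K_\varrho(x_0)$ lies compactly in $B_{R_\mathcal K\varrho}(x_0)$.

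Next I would construct a Lipschitz cut-off supported in $\mathcal K_\varrho(x_0)$ and equal to $1$ on $\Sigma$. Setting $d:=\mathrm{dist}(\Sigma,\partial\mathcal K_\varrho(x_0))>0$, for every $0<\delta<d$ I define
\[
\eta_\delta(x):=\min\left\{\left(\frac{\mathrm{dist}(x,\partial\mathcal K_\varrho(x_0))-\delta}{d-\delta}\right)_+,\,1\right\}.
\]
Since the Euclidean distance function to any closed set is $1$-Lipschitz, $\eta_\delta$ is Lipschitz with $|\nabla\eta_\delta|\le 1/(d-\delta)$; by construction it vanishes on a neighborhood of $\partial\mathcal K_\varrho(x_0)$ (so it is compactly supported in $\mathcal K_\varrho(x_0)$) and equals $1$ on the set $\{\mathrm{dist}(\cdot,\partial\mathcal K_\varrho(x_0))\ge d\}\supseteq\Sigma$. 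The convenience of working with the Euclidean distance (rather than the Minkowski functional of $\mathcal K$) is precisely that its Lipschitz constant does not depend on $\mathcal K$.

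Then I would pick any $u\in C^\infty_0(B_{R_\mathcal K\varrho}(x_0))$ with $u\ge 1$ on $\Sigma$. The product $u\,\eta_\delta$ is Lipschitz, compactly supported in $\mathcal K_\varrho(x_0)$, and not less than $1$ on $\Sigma$, so it is an admissible competitor for $\mathrm{cap}_p(\Sigma;\mathcal K_\varrho(x_0))$ (recall Definition \ref{defi:capacity}). Minkowski's inequality yields
\[
\mathrm{cap}_p(\Sigma;\mathcal K_\varrho(x_0))^{1/p}\le\|\nabla u\|_{L^p(\mathcal K_\varrho(x_0))}+\frac{1}{d-\delta}\,\|u\|_{L^p(\mathcal K_\varrho(x_0))}.
\]
I would then enlarge the two domains to $B_{R_\mathcal K\varrho}(x_0)$ and apply the Poincar\'e inequality on this ball together with the scaling $\lambda_p(B_{R_\mathcal K\varrho}(x_0))=\lambda_p(B_1)/(R_\mathcal K\varrho)^p$, obtaining
\[
\mathrm{cap}_p(\Sigma;\mathcal K_\varrho(x_0))^{1/p}\le\left(1+\frac{1}{d-\delta}\,\frac{R_\mathcal K\,\varrho}{\lambda_p(B_1)^{1/p}}\right)\|\nabla u\|_{L^p(B_{R_\mathcal K\varrho}(x_0))}.
\]
Taking the infimum over $u$ and then sending $\delta\to 0$ delivers the stated estimate.

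The argument is essentially routine; the only non-obvious choice is the cut-off, because $\mathcal K_\varrho(x_0)$ has just Lipschitz boundary, not smooth or radial structure. Using the Euclidean distance to $\partial\mathcal K_\varrho(x_0)$ keeps the gradient bound sharp (and equal to $1/d$), so that the dependence on $\mathcal K$ enters the final constant only through the Poincar\'e step on the enclosing ball $B_{R_\mathcal K\varrho}(x_0)$, producing exactly the factor $R_\mathcal K/\lambda_p(B_1)^{1/p}$ appearing in the statement.
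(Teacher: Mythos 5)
Your proof is correct and follows essentially the same route as the paper's: reduce to the inclusion $\mathcal{K}_\varrho(x_0)\subseteq B_{R_\mathcal{K}\varrho}(x_0)$, multiply an admissible $u$ for the ball by a distance-type Lipschitz cut-off supported in $\mathcal{K}_\varrho(x_0)$, and conclude via Minkowski plus the Poincar\'e inequality on $B_{R_\mathcal{K}\varrho}(x_0)$ with the scaling $\lambda_p(B_{R_\mathcal{K}\varrho})=\lambda_p(B_1)/(R_\mathcal{K}\varrho)^p$. The only (immaterial) difference is the exact form of the cut-off: you use the Euclidean distance to $\partial\mathcal{K}_\varrho(x_0)$ shifted by $\delta$, while the paper uses the distance to the boundary of the shrunk body $\mathcal{K}_{\varrho-\delta}(x_0)$; both are $1/(d-\delta)$-Lipschitz and yield the same constant.
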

\begin{proof}
We first observe that if $\Sigma\subseteq \mathcal{K}_\varrho(x_0)$, then we have $\Sigma\subseteq B_{R_\mathcal{K}\varrho}(x_0)$, as well. This simply follows from the fact that
\[
\mathcal{K}_\varrho(x_0)\subseteq B_{R_\mathcal{K}\varrho}(x_0),
\]
just by recalling the definition of $R_\mathcal{K}$.
\par
Let $u \in C^\infty_0(B_{R_\mathcal{K} \varrho}(x_0))$ be such that $u \ge 1$ on $\Sigma$. For every $0 < \delta <d:=\mathrm{dist}(\Sigma,\partial \mathcal{K}_\varrho(x_0))$, we introduce the cut-off function $\xi_\delta$
	given by \[
	\xi_\delta(x) = \min\left\{\frac{1}{d - \delta}\,\mathrm{dist}\Big(x;\,\, \partial \mathcal{K}_{\varrho - \delta}(x_0)\Big),\,\, 1\right\}, \qquad \mbox{ for } x \in \mathcal{K}_{\varrho - \delta}(x_0),  
	\]
	and extended by zero over the whole $\mathbb{R}^N$. This is a Lipschitz function with compact support in $\mathcal{K}_\varrho(x_0)$ such that $\xi_\delta=1$ on $\Sigma$. Thus, by the definition of relative capacity and Minkowski's inequality, we obtain	\[
	\begin{split}
		\Big( \mathrm{cap}_p(\Sigma; \mathcal{K}_\varrho(x_0))\Big)^{\frac{1}{p}} &\leq \|\nabla \left(u\,\xi_\delta\right)\|_{L^p(\mathcal{K}_\varrho(x_0))} \\ 
		&\leq \frac{1}{d - \delta}\,\|u\|_{L^p(B_{R_\mathcal{K}\varrho})} + \|\nabla u\|_{L^p(B_{R_\mathcal{K}\varrho}(x_0))} \\
		&\leq \left(\frac{\varrho}{d - \delta}\,\frac{R_\mathcal{K}}{\lambda_{p}(B_1)^{\frac{1}{p}}} + 1\right)\,\|\nabla u\|_{L^p(B_{R_\mathcal{K}\varrho}(x_0))},
	\end{split}
	\] 
	where in the last line we used Poincar\'e inequality for $B_{R_{\mathcal{K}}\varrho}(x_0)$. By the arbitrariness of $u$ and $\delta$, we then have  the desired estimate.
\end{proof}
We have the following result, which the quantities $R_{p,\gamma}(\Omega;\mathcal{K})$ with $R_{p,\gamma}(\Omega)$. This is a generalization of \cite[Proposition 2.2.5]{Boz}, contained in the Ph.D. thesis of the second author.
\begin{proposition} 
	Let $1 \leq p \leq N$ and let $\Omega \subseteq \mathbb{R}^N$ be an open set. For every standard body $\mathcal{K} \subseteq \mathbb{R}^N$, there exist two constants $0 < c \leq 1\le d$, both depending only on $N$, $p$ and $\mathcal{K}$,  such that we have 
	\begin{equation} \label{equiv-innercubic-capin}
		R_{p, c\cdot\gamma}(\Omega; \mathcal{K}) \leq R_{p, \gamma}(\Omega), \qquad\text{for every}\ 0<\gamma<1,
	\end{equation}
	and
	\begin{equation} \label{equiv-innercubic-capin2}	
	R_{p, \gamma}(\Omega) 	\leq R_\mathcal{K}\,R_{p, d\cdot\gamma}(\Omega; \mathcal{K}),\qquad\text{for every}\ 0<\gamma<\frac{1}{d},
	\end{equation}
	where $R_\mathcal{K} = \max_{x \in \partial \mathcal{K}} |x|. $
\end{proposition}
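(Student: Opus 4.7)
The plan is to exploit the inclusions $B_r(x_0)\subseteq\mathcal{K}_r(x_0)\subseteq B_{R_\mathcal{K} r}(x_0)$ (both consequences of $B_1\subseteq\mathcal{K}\subseteq B_{R_\mathcal{K}}$) in order to pass between negligibility statements phrased with balls and those phrased with rescaled copies of $\mathcal{K}$. The comparison Lemmas \ref{lm:cap_palleVSbody} and \ref{lm:ball-VS-body} will swap the reference set with a multiplicative loss depending only on $N$, $p$ and $\mathcal{K}$, while the standard scaling of the relative $p$-capacity (visible in \eqref{eqn:cap-ball0}--\eqref{eqn:cap-ballN}) yields the universal identity
\[
\mathrm{cap}_p(\overline{\mathcal{K}_r(x_0)};\mathcal{K}_{2r}(x_0)) = D\cdot \mathrm{cap}_p(\overline{B_r(x_0)};B_{2r}(x_0)),\qquad D:=\frac{\mathrm{cap}_p(\overline{\mathcal{K}};\mathcal{K}_2)}{\mathrm{cap}_p(\overline{B_1};B_2)},
\]
valid for every $r>0$ and every $1\le p\le N$. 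Monotonicity \eqref{monotone} will be used repeatedly to enlarge or shrink the test set once the reference set is fixed.

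For \eqref{equiv-innercubic-capin}, fix $r<R_{p,c\gamma}(\Omega;\mathcal{K})$ with a witness center $x_0$ and set $\Sigma=\overline{B_r(x_0)}\setminus\Omega$, so that $\Sigma\subseteq\overline{\mathcal{K}_r(x_0)}\setminus\Omega$ and $\mathrm{dist}(\Sigma,\partial B_{2r}(x_0))\ge r$. Applying Lemma \ref{lm:cap_palleVSbody} at $\varrho=2r$, then monotonicity, then the $(p,c\gamma)$-negligibility of $\overline{\mathcal{K}_r(x_0)}\setminus\Omega$ relative to $\mathcal{K}$, and finally the scaling identity, I obtain
\[
\mathrm{cap}_p(\Sigma;B_{2r}(x_0))\le C_1\, D\, c\gamma\, \mathrm{cap}_p(\overline{B_r(x_0)};B_{2r}(x_0)),\qquad C_1:=\left(\frac{2}{\lambda_p(\mathcal{K})^{1/p}}+1\right)^p.
\]
Choosing $c:=\min\{1,1/(C_1 D)\}$ makes $\overline{B_r(x_0)}\setminus\Omega$ $(p,\gamma)$-negligible relative to balls, hence $r\le R_{p,\gamma}(\Omega)$; letting $r$ tend to the supremum gives \eqref{equiv-innercubic-capin}.

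For \eqref{equiv-innercubic-capin2}, I reverse the roles: fix $r<R_{p,\gamma}(\Omega)$ with a witness center $x_0$, set $s=r/R_\mathcal{K}$ so that $\mathcal{K}_s(x_0)\subseteq B_r(x_0)$ and $\mathcal{K}_{2s}(x_0)\subseteq B_{2r}(x_0)$, and take $\Sigma=\overline{\mathcal{K}_s(x_0)}\setminus\Omega$. To apply Lemma \ref{lm:ball-VS-body} at $\varrho=2s$ I first need the scale-invariant thickness estimate
\[
\mathrm{dist}(\overline{\mathcal{K}_s(x_0)},\,\partial\mathcal{K}_{2s}(x_0))\ge s.
\]
After rescaling by $2s$ this reduces to $B_{1/2}(y/2)\subseteq\mathcal{K}$ for every $y\in\overline{\mathcal{K}}$: any $w\in B_{1/2}(y/2)$ writes as $(y+2v)/2$ with $2v\in B_1\subseteq\mathcal{K}$, and the starshapedness of $\mathcal{K}$ with respect to the point $2v\in B_1$ forces this midpoint into $\mathcal{K}$ (first for $y\in\mathcal{K}$, then for $y\in\overline{\mathcal{K}}$ by a routine approximation). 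Lemma \ref{lm:ball-VS-body} then yields
\[
\mathrm{cap}_p(\Sigma;\mathcal{K}_{2s}(x_0))\le C_2\,\mathrm{cap}_p(\Sigma;B_{2r}(x_0)),\qquad C_2:=\left(\frac{2R_\mathcal{K}}{\lambda_p(B_1)^{1/p}}+1\right)^p,
\]
and combining this with monotonicity, the $(p,\gamma)$-negligibility of $\overline{B_r(x_0)}\setminus\Omega$ and the scaling identity rewritten as $\mathrm{cap}_p(\overline{B_r(x_0)};B_{2r}(x_0))=R_\mathcal{K}^{\,N-p}D^{-1}\,\mathrm{cap}_p(\overline{\mathcal{K}_s(x_0)};\mathcal{K}_{2s}(x_0))$ shows that $\Sigma$ is $(p,d\gamma)$-negligible relative to $\mathcal{K}$, with $d:=\max\{1,\,C_2 R_\mathcal{K}^{\,N-p} D^{-1}\}$. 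The restriction $\gamma<1/d$ guarantees $d\gamma<1$, so the conclusion is non-trivial, and taking the supremum in $r$ gives \eqref{equiv-innercubic-capin2}.

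The hard part is the thickness estimate $\mathrm{dist}(\overline{\mathcal{K}_s(x_0)},\partial\mathcal{K}_{2s}(x_0))\ge s$: without a quantitative separation of these two boundaries (scaling like $s$) the constant produced by Lemma \ref{lm:ball-VS-body} would blow up and \eqref{equiv-innercubic-capin2} would fail. This is precisely the step where the requirement that $\mathcal{K}$ be starshaped \emph{with respect to a ball}, and not merely a single point, is indispensable, consistently with the comment already made just before Lemma \ref{lm:cap_palleVSbody}.
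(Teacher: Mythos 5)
Your proof is correct and follows essentially the same route as the paper: both directions rest on Lemmas \ref{lm:cap_palleVSbody} and \ref{lm:ball-VS-body}, the scaling of the relative $p$-capacity and the monotonicity \eqref{monotone}, and your constants $c$ and $d$ agree with the paper's up to the harmless replacement of $\mathrm{dist}(\overline{\mathcal{K}},\partial \mathcal{K}_2)$ by your lower bound $1$ (a correct consequence of starshapedness with respect to $B_1$, which the paper does not need since it simply keeps this distance as an abstract positive constant inside $d$). The only point to polish is that you argue directly from a witness center rather than contrapositively as the paper does, so you should run the argument on a sequence of \emph{admissible} radii tending to the supremum — a radius strictly below the supremum need not itself admit a witness, since the admissible set is not obviously downward closed — which is clearly what your closing ``letting $r$ tend to the supremum'' intends.
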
 
\begin{proof}
	We can suppose that $R_{p, \gamma}(\Omega) < +\infty$, otherwise the leftmost inequality in \eqref{equiv-innercubic-capin} is trivial. We set 
	\begin{equation}
	\label{c}
		c=c(N,p,\mathcal{K}):= \left(\frac{\mathrm{cap}_p(\overline{B_1}; B_2)}{\mathrm{cap}_p(\overline{\mathcal{K}}; \mathcal{K}_{2})}\right)\,\left(\frac{2}{\lambda_{p}(\mathcal{K})^{\frac{1}{p}}} + 1\right)^{-p},
	\end{equation}
and observe that $c\le 1$, by Lemma \ref{lm:cap_palleVSbody} with $x_0=0$, $\Sigma=\overline{B_1}$ and $\varrho=2$.
	Let $r > R_{p, \gamma}(\Omega)$, by definition of capacitary inradius we then have
	 \begin{equation} \label{eqn:def-capin-prop}
		\mathrm{cap}_p(\overline{B_r(x_0)} \setminus \Omega; B_{2r}(x_0)) > \gamma\,\mathrm{cap}_p(\overline{B_r(x_0)}; B_{2r}(x_0)), \qquad \mbox{ for every } x_0 \in \mathbb{R}^N,
	\end{equation}
By using \eqref{francesco} with $\Sigma=\overline{B_r(x_0)}\setminus \Omega$ and $\varrho=2\,r$, we also get	
	\begin{equation*}
		\begin{split}
			\mathrm{cap}_p(\overline{B_r(x_0)}\setminus \Omega; B_{2r}(x_0)) &\leq \left(\frac{2}{\lambda_{p}(\mathcal{K})^{\frac{1}{p}}} + 1\right)^p\,\mathrm{cap}_p(\overline{B_r(x_0)}\setminus \Omega; \mathcal{K}_{2r}(x_0)), \\
			&\leq \left(\frac{2}{\lambda_{p}(\mathcal{K})^{\frac{1}{p}}} + 1\right)^p\,\mathrm{cap}_p(\overline{\mathcal{K}_r(x_0)}\setminus \Omega; \mathcal{K}_{2r}(x_0)).
		\end{split}
	\end{equation*}
	Observe that we also used \eqref{monotone}, thanks to the fact that $\overline{B_r(x_0)}\setminus \Omega\subseteq \overline{\mathcal{K}_r(x_0)}\setminus \Omega$.
This, combined with \eqref{eqn:def-capin-prop} gives that
\[
\gamma\,\left(\frac{2}{\lambda_{p}(\mathcal{K})^{\frac{1}{p}}} + 1\right)^{-p}\,\mathrm{cap}_p(\overline{B_r(x_0)}; B_{2r}(x_0))\le \mathrm{cap}_p(\overline{\mathcal{K}_r(x_0)}\setminus \Omega; \mathcal{K}_{2r}(x_0)).
\]
On the other hand, by the scaling properties of the relative capacity, we have 
\[
	\mathrm{cap}_p(\overline{B_r(x_0)}; B_{2r}(x_0)) = \left(\frac{\mathrm{cap}_p(\overline{B_1}; B_2)}{\mathrm{cap}_p(\overline{\mathcal{K}}; \mathcal{K}_{2})}\right)\,\mathrm{cap}_p(\overline{\mathcal{K}_r(x_0)}; \mathcal{K}_{2r}(x_0)).
\]	
Thus, by recalling the definition \eqref{c} of $c$, we have obtained
	\[
	c\cdot\gamma\,\mathrm{cap}_p(\overline{\mathcal{K}_r}; \mathcal{K}_{2r})< \mathrm{cap}_p(\overline{\mathcal{K}_r}\setminus \Omega; \mathcal{K}_{2r}),
	\]
By the definition of capacitary inradius relative to $\mathcal{K}$, this shows that $r>R_{p,\gamma}(\Omega;\mathcal{K})$. By the arbitrariness of $r$, we can finally obtain \eqref{equiv-innercubic-capin}.
\vskip.2cm\noindent
Let us choose 
\begin{equation*}
		d = d(N, p , \mathcal{K}) =	\max\left\{1, R_{\mathcal{K}}^{N- p}\,\Bigg(\dfrac{\mathrm{cap}_p(\overline{B_{1}}; B_{2})}{\mathrm{cap}_p(\overline{\mathcal{K}}; \mathcal{K}_2)}\Bigg)\,\left(\frac{2}{\mathrm{dist}(\overline{\mathcal{K}},\partial \mathcal{K}_2)}\,\frac{R_\mathcal{K}}{\lambda_{p}(B_1)^{\frac{1}{p}}} + 1\right)^{p}\right\},
	\end{equation*}
and take $0<\gamma<1/d$. We can suppose that $R_{p, d\cdot\gamma}(\Omega; \mathcal{K}) < +\infty$. We take any $r > R_{p, d\cdot\gamma}(\Omega; \mathcal{K})$, so that
	\begin{equation} \label{cavallino}
		\mathrm{cap}_p(\overline{\mathcal{K}_r(x_0)} \setminus \Omega; \mathcal{K}_{2r}(x_0)) > d\cdot\gamma\,\mathrm{cap}_p(\overline{\mathcal{K}_r(x_0)}; \mathcal{K}_{2r}(x_0)), \qquad \mbox{ for every } x_0 \in \mathbb{R}^N.
	\end{equation}
For the leftmost term, by using Lemma \ref{lm:ball-VS-body} with $\Sigma=\overline{\mathcal{K}_r(x_0)}\setminus \Omega$ and $\varrho=2\,r$, we have\footnote{We use that
\[
\mathrm{dist}(\overline{\mathcal{K}_r(x_0)}\setminus \Omega;\partial \mathcal{K}_{2r}(x_0))\ge \mathrm{dist}(\overline{\mathcal{K}_r(x_0)};\partial \mathcal{K}_{2r}(x_0))=r\,\mathrm{dist}(\overline{\mathcal{K}};\partial \mathcal{K}_{2}).
\]}
	\begin{equation} \label{eqn:finalmente}
	\begin{split}
		\mathrm{cap}_p(\overline{\mathcal{K}_r(x_0)}\setminus \Omega; \mathcal{K}_{2r}(x_0)) &\leq  \left(\frac{2}{\mathrm{dist}(\overline{\mathcal{K}},\partial \mathcal{K}_2)}\,\frac{R_\mathcal{K}}{\lambda_{p}(B_1)^{\frac{1}{p}}} + 1\right)^p\,\mathrm{cap}_p(\overline{\mathcal{K}_r(x_0)}\setminus \Omega; B_{2R_\mathcal{K} r}(x_0))\\
&\le \left(\frac{2}{\mathrm{dist}(\overline{\mathcal{K}},\partial \mathcal{K}_2)}\,\frac{R_\mathcal{K}}{\lambda_{p}(B_1)^{\frac{1}{p}}} + 1\right)^p\,\mathrm{cap}_p(\overline{B_{R_\mathcal{K} r}(x_0)} \setminus \Omega; B_{2R_\mathcal{K} r}(x_0)).
	\end{split}
	\end{equation} 
The second inequality follows again from \eqref{monotone}, since we have $\mathcal{K}_r(x_0)\subseteq B_{R_\mathcal{K} r}(x_0)$. Observe that by scaling
\[
\begin{split}
\mathrm{cap}_p(\overline{\mathcal{K}_r(x_0)}; \mathcal{K}_{2r}(x_0))&=r^{N-p}\,\frac{\mathrm{cap}_p(\overline{\mathcal{K}}; \mathcal{K}_2)}{\mathrm{cap}_p(\overline{B_1},B_2)}\,\mathrm{cap}_p(\overline{B_1},B_2)\\
&=\left(\frac{1}{R_\mathcal{K}}\right)^{N-p}\,\frac{\mathrm{cap}_p(\overline{\mathcal{K}}; \mathcal{K}_2)}{\mathrm{cap}_p(\overline{B_1},B_2)}\,\mathrm{cap}_p(\overline{B_{R_\mathcal{K}r}(x_0)},B_{2R_\mathcal{K}r}(x_0)).
\end{split}
\]
Thus, by recalling the definition of $d$, we get in particular
\[
\mathrm{cap}_p(\overline{\mathcal{K}_r(x_0)}; \mathcal{K}_{2r}(x_0))\ge \frac{1}{d}\,\left(\frac{2 R_\mathcal{K}}{\lambda_{p}(B_1)^{\frac{1}{p}}} + 1\right)^{p}\,\mathrm{cap}_p(\overline{B_{R_\mathcal{K}r}},B_{2R_\mathcal{K}r}).
\]
Together with \eqref{cavallino} and \eqref{eqn:finalmente}, this implies that 
	\begin{equation*}
		\mathrm{cap}_p\Big(\overline{B_{R_\mathcal{K} r}(x_0)} \setminus \Omega; B_{2R_\mathcal{K} r}(x_0)\Big) > d\cdot\gamma\,\mathrm{cap}_p\Big(\overline{B_{R_\mathcal{K} r}(x_0)}; B_{2R_\mathcal{K} r}(x_0)\Big), \qquad \mbox{ for every } x_0 \in \mathbb{R}^N.
	\end{equation*}
	Therefore, by the definition of capacitary inradius  we have 
\[
		R_\mathcal{K}\,r \geq R_{p, \gamma}(\Omega).
\]
By arbitrariness of $r>R_{p,\gamma}(\Omega;\mathcal{K})$, we obtain \eqref{equiv-innercubic-capin2}.
\end{proof} 
\begin{open}
Prove that there exist two constants $a=a(N,p,\mathcal{K},\gamma),b=b(N,p,\mathcal{K},\gamma)>0$ such that
\[
a\,R_{p,\gamma}(\Omega;\mathcal{K})\le R_{p,\gamma}(\Omega)\le b\,R_{p,\gamma}(\Omega;\mathcal{K}),\qquad \text{\it for every}\ 0<\gamma<1.
\]
This would permit to obtain a two-sided estimate on $\lambda_p(\Omega)$ and, more generally, on $\lambda_{p,q}(\Omega)$ with $q>p$ subcritical, in terms on $R_{p,\gamma}(\Omega;\mathcal{K})$, no matter the shape $\mathcal{K}$ and the negligibility parameter $\gamma$. For the particular case $\mathcal{K}=(-1,1)^N$, such a result would extend \cite[Theorem 15.4.1]{Maz} (in the case of first order Sobolev spaces), by dropping the restriction $\gamma\le \gamma_0(N,p)$ there taken (see \cite[equation (14.1.2)]{Maz}).
\end{open}

\section{Different capacities}
\label{sec:4}
In this section, rather than changing the basic ``shape'' used to compute the capacity, we will vary the concept of capacity itself, used to define the capacitary inradius. In particular, we want to compare our definition of $R_{p,\gamma}$ with the ones contained in \cite{Ga2} and \cite{MS}. 

\subsection{Maz'ya-Shubin inradius}
We start with the capacitary inradius defined in \cite{MS}.
 To this aim, we need at first to recall the following notion of $p-$capacity.
\begin{definition} \label{def:abs-hom-cap}
Let $1\le p<N$ or $p=N=1$, for every $\Sigma\subseteq\mathbb{R}^N$ compact set we define its {\it absolute homogeneous $p-$capacity} as
\[
\mathrm{cap}_p(\Sigma)=\inf_{\varphi\in C^\infty_0(\mathbb{R}^N)} \left\{\int_{\mathbb{R}^N} |\nabla \varphi|^p\,dx\, :\, \varphi\ge 1\ \text{on}\ \Sigma\right\}.
\]
Observe that this coincides with $\mathrm{cap}_p(\Sigma;\mathbb{R}^N)$ in our previous notation, i.e. this is the $p-$capacity of $\Sigma$ relative to the whole space $\mathbb{R}^N$.
\end{definition}
\begin{remark}
\label{rem:accia}
The restriction $p<N$ is unavoidable in the previous definition, at least for $N\ge 2$. Indeed, in this situation we have
\[
\mathrm{cap}_p(\Sigma)=0,\qquad \text{for every}\ p\ge N\ \text{and every}\ \Sigma\subseteq\mathbb{R}^N \ \text{compact set}.
\]
For $p>N$, this follows by a simple scaling argument. The borderline case $p=N\ge 2$ is slightly more delicate, because of the scale invariance of the $N-$Dirichlet integral (see for example \cite[pages 148--149]{Maz}).
\end{remark}
We quantitatively compare the absolute capacity with the relative one. To this aim, we need to recall the definition of sharp constant in Sobolev's inequality for $p<N$, i.e.
\[
\mathcal{S}_{N,p}=\sup_{\varphi\in C^\infty_0(\mathbb{R}^N)} \left\{\left(\int_{\mathbb{R}^N} |\varphi|^{p^*}\,dx\right)^\frac{p}{p^*}\, :\, \|\nabla \varphi\|_{L^p(\mathbb{R}^N)}=1\right\},\qquad p^*=\frac{N\,p}{N-p}.
\]
\begin{lemma}[Absolute VS. relative]
\label{lm:relative_absolute}
Let $1\le p<N$ and let $\Sigma\subseteq\mathbb{R}^N$ be a compact set. If $\Sigma\Subset E$ with $E\subseteq \mathbb{R}^N$ open bounded set, we have
\[
\mathrm{cap}_p(\Sigma)\le \mathrm{cap}_p(\Sigma;E)\le 2^{p-1}\,\left(1+\frac{|E|^\frac{p}{N}}{(\mathrm{dist}(\Sigma,\partial E))^p}\,\mathcal{S}_{N,p}\right)\,\mathrm{cap}_p(\Sigma).
\]
\end{lemma}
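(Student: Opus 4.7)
\emph{Proof proposal.} The left inequality is immediate from the definitions: any $\varphi\in C^\infty_0(E)$ (or Lipschitz with compact support in $E$) extends by zero to a competitor for $\mathrm{cap}_p(\Sigma)$, so $\mathrm{cap}_p(\Sigma)\le\mathrm{cap}_p(\Sigma;E)$.

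For the right inequality, the plan is a standard cut-off argument: given any $u\in C^\infty_0(\mathbb{R}^N)$ with $u\ge 1$ on $\Sigma$, I would manufacture a Lipschitz function $v=u\,\eta$ with compact support in $E$ satisfying $v\ge 1$ on $\Sigma$, and estimate its Dirichlet energy. Setting $d=\mathrm{dist}(\Sigma,\partial E)$, for $0<\delta<d$ I introduce the Lipschitz cut-off
\[
\eta_\delta(x)=\min\left\{\frac{(d-\delta-\mathrm{dist}(x,\Sigma))_{+}}{d-\delta}+\mathbf{1}_{\{\mathrm{dist}(x,\Sigma)\le 0\}},\;1\right\},
\]
or more simply $\eta_\delta(x)=\min\{(d-\delta)^{-1}(d-\delta-\mathrm{dist}(x,\Sigma))_{+}+1\cdot\mathbf{1}_\Sigma,1\}$; the important properties are that $\eta_\delta=1$ on $\Sigma$, $\eta_\delta$ is compactly supported in $E$, and $|\nabla\eta_\delta|\le (d-\delta)^{-1}$ almost everywhere. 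Then $v_\delta=u\,\eta_\delta$ is Lipschitz, compactly supported in $E$, and satisfies $v_\delta\ge 1$ on $\Sigma$, so it is an admissible competitor for $\mathrm{cap}_p(\Sigma;E)$ by the remark after Definition \ref{defi:capacity}.

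For the energy estimate, Leibniz and the elementary inequality $(a+b)^p\le 2^{p-1}(a^p+b^p)$ for $p\ge 1$ give
\[
\int_E|\nabla v_\delta|^p\,dx\le 2^{p-1}\int_E|\nabla u|^p\,\eta_\delta^p\,dx+2^{p-1}\int_E|u|^p\,|\nabla\eta_\delta|^p\,dx.
\]
The first term is bounded by $2^{p-1}\int_{\mathbb{R}^N}|\nabla u|^p\,dx$ since $0\le\eta_\delta\le 1$. For the second term, I bound $|\nabla\eta_\delta|^p\le (d-\delta)^{-p}$ on its support and apply H\"older's inequality with exponents $p^*/p$ and $N/p$ (noting $1-p/p^*=p/N$):
\[
\int_E|u|^p\,dx\le |E|^{p/N}\left(\int_E|u|^{p^*}\,dx\right)^{p/p^*}\le |E|^{p/N}\,\mathcal{S}_{N,p}\int_{\mathbb{R}^N}|\nabla u|^p\,dx,
\]
where the last step is Sobolev's inequality on $\mathbb{R}^N$. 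Combining the two contributions yields
\[
\int_E|\nabla v_\delta|^p\,dx\le 2^{p-1}\left(1+\frac{|E|^{p/N}}{(d-\delta)^p}\,\mathcal{S}_{N,p}\right)\int_{\mathbb{R}^N}|\nabla u|^p\,dx.
\]
Letting $\delta\to 0^+$ and taking the infimum over admissible $u$ produces the desired upper bound. There is no real obstacle here: the only mild technical point is the initial approximation $\delta>0$ needed to guarantee that $v_\delta$ has compact support strictly inside $E$, which disappears in the limit and does not affect the constant.
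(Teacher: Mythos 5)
Your proposal is correct and follows essentially the same route as the paper: a Lipschitz cut-off equal to $1$ on $\Sigma$ with gradient bounded by $(d-\delta)^{-1}$, the elementary inequality giving the factor $2^{p-1}$, and H\"older plus Sobolev to absorb $\int_E|u|^p$. The only (immaterial) difference is that you build the cut-off from $\mathrm{dist}(x,\Sigma)$ while the paper uses $\mathrm{dist}(x,\partial E)$; the extra indicator term in your formula is superfluous but harmless.
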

\begin{proof}
The leftmost inequality trivially follows from the fact that in the definition of $\mathrm{cap}_p(\Sigma)$ we perform the infimum on a larger class of functions. For the rightmost one, let us take 
$\varphi\in C^\infty_0(\mathbb{R}^N)$ such that $\varphi\ge 1$ on $\Sigma$. For every $0<\delta<\mathrm{dist}(\Sigma,\partial E)$, we take the following Lipschitz cut-off function with compact support in $E$, i.e.
\begin{equation}
\label{telostronco}
\eta_\delta(x)=\min\left\{\frac{(\mathrm{dist}(x,\partial E)-\delta)_+}{\mathrm{dist}(\Sigma,\partial E)-\delta},1\right\}.
\end{equation}
Observe that
\[
0\le \eta_\delta\le 1,\qquad \eta_\delta\equiv 1\ \text{on}\ \Sigma,\qquad |\nabla \eta_\delta|\le \frac{1}{\mathrm{dist}(\Sigma,\partial E)-\delta}.
\]
The function $\varphi\,\eta_\delta$ is feasible for the minimization problem which defines $\mathrm{cap}_p(\Sigma;E)$. Thus, we obtain
\[
\begin{split}
\mathrm{cap}_p(\Sigma;E)&\le 2^{p-1}\,\int_E |\nabla \varphi|^p\,\eta_\delta^p\,dx+2^{p-1}\,\int_E |\nabla \eta_\delta|^p\,|\varphi|^p\,dx\\
&\le 2^{p-1}\,\int_{\mathbb{R}^N} |\nabla \varphi|^p\,dx+\frac{2^{p-1}}{(\mathrm{dist}(\Sigma,\partial E)-\delta)^p}\,\int_{E} |\varphi|^p\,dx.
\end{split}
\] 
To control the last term, we combine H\"older's and Sobolev's inequalities as follows
\[
\int_{E} |\varphi|^p\,dx\le |E|^{1-\frac{p}{p^*}}\,\left(\int_{E} |\varphi|^{p^*}\,dx\right)^\frac{p}{p^*}\le |E|^{1-\frac{p}{p^*}}\,\mathcal{S}_{N,p}\,\int_{\mathbb{R}^N} |\nabla \varphi|^p\,dx.
\]
In conclusion, we obtain
\[
\mathrm{cap}_p(\Sigma;E)\le 2^{p-1}\,\left(1+\frac{|E|^\frac{p}{N}}{(\mathrm{dist}(\Sigma,\partial E)-\delta)^p}\,\mathcal{S}_{N,p}\right)\,\int_{\mathbb{R}^N} |\nabla \varphi|^p\,dx.
\]
By arbitrariness of $\delta$ and $\varphi$, we get the conclusion.
\end{proof}
\begin{remark}
As one can easily see by inspecting the previous proof, the boundedness assumption on $E$ is not really needed. The same proof works under the assumption $|E|<+\infty$.
\end{remark}
For $N\ge 2$, we see that the rightmost inequality of Lemma \ref{lm:relative_absolute} gets spoiled as $p$ goes to $N$. Indeed, we recall that
\[
\mathcal{S}_{N,p}\sim \frac{1}{(N-p)^{p-1}},\qquad \text{as}\ p\nearrow N,
\] 
see for example \cite{TaG}.
This is accordance with Remark \ref{rem:accia}, thus the equivalence of the two capacities {\it must} cease to be true in this limit case.
\par
The case $p=N=1$ is however exceptional: as simple and useless as it may seem, it deserves a separate statement.
\begin{lemma}
\label{lemma:relative_absolute1d}
Let $\Sigma\subseteq\mathbb{R}$ be a compact set. If $\Sigma\Subset E$ with $E\subseteq \mathbb{R}$ open bounded set, we have
\[
\mathrm{cap}_1(\Sigma)\le \mathrm{cap}_1(\Sigma;E)\le \left(1+\frac{|E|}{\mathrm{dist}(\Sigma,\partial E)}\,\frac{1}{2}\right)\,\mathrm{cap}_1(\Sigma).
\]
\end{lemma}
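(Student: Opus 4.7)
The plan is to mirror the proof of Lemma \ref{lm:relative_absolute} almost line for line, with two simplifications specific to the case $p=1$: first, for $p=1$ Minkowski's inequality is just the triangle inequality for integrals, so there is no factor $2^{p-1}=1$ to worry about; second, and more importantly, in dimension one the critical Sobolev embedding of Lemma \ref{lm:relative_absolute} fails (as pointed out in Remark \ref{rem:accia}), but it can be replaced by an elementary $L^\infty$–$L^1$ inequality specific to $N=1$.

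More precisely, I would start from $\varphi\in C^\infty_0(\mathbb{R})$ with $\varphi\ge 1$ on $\Sigma$ and introduce, for $0<\delta<\mathrm{dist}(\Sigma,\partial E)$, exactly the cut-off function $\eta_\delta$ in \eqref{telostronco} with $N=1$, which still satisfies $\eta_\delta\equiv 1$ on $\Sigma$, is compactly supported in $E$, and has $|\eta_\delta'|\le (\mathrm{dist}(\Sigma,\partial E)-\delta)^{-1}$. Using $\varphi\,\eta_\delta$ as a competitor for $\mathrm{cap}_1(\Sigma;E)$ and estimating $|(\varphi\,\eta_\delta)'|\le |\varphi'|\,\eta_\delta+|\eta_\delta'|\,|\varphi|$, I get
\[
\mathrm{cap}_1(\Sigma;E)\le \int_{\mathbb{R}}|\varphi'|\,dx+\frac{1}{\mathrm{dist}(\Sigma,\partial E)-\delta}\,\int_E|\varphi|\,dx.
\]

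The key step is to control $\|\varphi\|_{L^1(E)}$ by $\|\varphi'\|_{L^1(\mathbb{R})}$. Here I use the elementary pointwise bound
\[
2\,|\varphi(x)|=\left|\int_{-\infty}^x \varphi'(t)\,dt-\int_x^{+\infty}\varphi'(t)\,dt\right|\le \int_{\mathbb{R}}|\varphi'|\,dt,
\]
valid for every $\varphi\in C^\infty_0(\mathbb{R})$, which yields $\|\varphi\|_{L^\infty(\mathbb{R})}\le \frac{1}{2}\|\varphi'\|_{L^1(\mathbb{R})}$ and hence $\int_E|\varphi|\,dx\le \frac{|E|}{2}\int_{\mathbb{R}}|\varphi'|\,dx$. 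This is the substitute for the combined H\"older--Sobolev estimate in the higher-dimensional proof, and it is where the $1/2$ in the statement comes from. Plugging this back in gives
\[
\mathrm{cap}_1(\Sigma;E)\le \left(1+\frac{|E|}{2\,(\mathrm{dist}(\Sigma,\partial E)-\delta)}\right)\int_{\mathbb{R}}|\varphi'|\,dx,
\]
and the conclusion then follows by letting $\delta\to 0$ and taking the infimum over admissible $\varphi$, which produces $\mathrm{cap}_1(\Sigma)$ on the right-hand side.

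There is no real obstacle beyond checking that the $L^\infty$ bound above really is the right replacement for $\mathcal{S}_{N,p}$ in the critical regime: this is what allows the inequality to survive in a case where the constant $\mathcal{S}_{N,p}$ in Lemma \ref{lm:relative_absolute} would blow up. The leftmost inequality $\mathrm{cap}_1(\Sigma)\le \mathrm{cap}_1(\Sigma;E)$ is, as before, just the monotonicity coming from enlarging the class of admissible test functions from $C^\infty_0(E)$ to $C^\infty_0(\mathbb{R})$.
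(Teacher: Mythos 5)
Your argument is correct and coincides with the paper's proof: the authors likewise run the proof of Lemma \ref{lm:relative_absolute} with $p=1$ (so the factor $2^{p-1}$ disappears) and replace the H\"older--Sobolev step by the same elementary bound $\int_E|\varphi|\,dx\le |E|\,\|\varphi\|_{L^\infty(\mathbb{R})}\le \tfrac{|E|}{2}\int_{\mathbb{R}}|\varphi'|\,dx$. Nothing to add.
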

\begin{proof}
We can proceed as in the proof of Lemma \ref{lm:relative_absolute}. The only difference is in the estimate of
\[
\int_E |\varphi|\,dx\le |E|\,\|\varphi\|_{L^\infty(\mathbb{R})}\le \frac{|E|}{2}\,\int_{\mathbb{R}} |\varphi'|\,dx.
\]
This is enough to conclude.
\end{proof}
\begin{definition} \label{def:ms-capin}
Let $1\le p<N$ or $p=N=1$ and let $\gamma\in(0,1)$. For an open set $\Omega\subseteq\mathbb{R}^N$ we define its {\it Maz'ya-Shubin capacitary inradius} by 
\[
R^{\rm MS}_{p,\gamma}(\Omega):=\sup\Big\{r>0\, :\, \exists x_0\in\mathbb{R}^N\ \text{such that}\ \mathrm{cap}_p\left(\overline{B_r(x_0)}\setminus\Omega\right)\le \gamma\,\mathrm{cap}_p\left(\overline{B_r(x_0)}\right)\Big\}.
\]
Observe that the only difference with our capacitary inradius $R_{p,\gamma}(\Omega)$ is the use of the absolute homogeneous $p-$capacity in formulating the $(p,\gamma)-$negligibility condition, in place of the relative one.
\end{definition}
\begin{proposition}
	Let $1\le p<N$ or $p=N=1$, then there exist two constants $\alpha = \alpha(N,p)\le 1$ and $\beta=\beta(N,p)\ge 1$ such that for every $\Omega\subseteq\mathbb{R}^N$ open set we have 
	\[
	R^{\rm MS}_{p,\alpha\cdot\gamma}(\Omega)\le R_{p,\gamma}(\Omega),\qquad\text{for every}\ 0<\gamma<1,
	\]
	and
	\[
R_{p,\gamma}(\Omega)	\le R^{\rm MS}_{p,\beta \cdot\gamma}(\Omega),\qquad\text{for every}\ 0<\gamma<\frac{1}{\beta}.
	\]
Moreover, for $N\ge 2$ we have
\[
\lim_{p\nearrow N}\alpha=0\qquad\text{and}\qquad \lim_{p\nearrow N}\beta=+\infty.
\]
\end{proposition}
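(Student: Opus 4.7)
The plan is to mimic the structure of the proof of the previous proposition (on different metrics), but now exploiting the quantitative comparison between the relative and the absolute $p$-capacity provided by Lemma~\ref{lm:relative_absolute} (and by Lemma~\ref{lemma:relative_absolute1d} in the exceptional case $p=N=1$), together with the explicit formulas \eqref{eqn:cap-ball0} and \eqref{eqn:cap-ball} for the capacity of a ball.

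First, I will specialize Lemma~\ref{lm:relative_absolute} to the pair $\Sigma \subseteq \overline{B_r(x_0)}$ and $E=B_{2r}(x_0)$. Since any such $\Sigma$ satisfies $\mathrm{dist}(\Sigma,\partial B_{2r}(x_0))\ge r$ and $|B_{2r}(x_0)|^{p/N} = \omega_N^{p/N}\,(2r)^{p}$, the lemma gives a constant
\[
K = K(N,p) := 2^{p-1}\left(1 + 2^{p}\,\omega_N^{p/N}\,\mathcal{S}_{N,p}\right) \ge 1,
\]
(with the analogue $K(1,1)=3$ from Lemma~\ref{lemma:relative_absolute1d}), such that
\[
\mathrm{cap}_p(\Sigma) \;\le\; \mathrm{cap}_p(\Sigma;B_{2r}(x_0)) \;\le\; K \,\mathrm{cap}_p(\Sigma),\qquad\text{for every compact } \Sigma\subseteq\overline{B_r(x_0)}.
\]
Secondly, by taking the ratio between \eqref{eqn:cap-ball} (or \eqref{eqn:cap-ball0}) and the limiting absolute version, I will compute explicitly
\[
A(N,p) := \frac{\mathrm{cap}_p(\overline{B_r(x_0)};B_{2r}(x_0))}{\mathrm{cap}_p(\overline{B_r(x_0)})} = \frac{1}{\left(1-2^{-(N-p)/(p-1)}\right)^{p-1}},\qquad 1<p<N,
\]
scale invariant and with $A(N,1)=1$, $A(1,1)=1$.

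For the first claimed bound, I fix $r>R_{p,\gamma}(\Omega)$. By definition, for every $x_0\in\mathbb{R}^N$,
\[
\mathrm{cap}_p\!\left(\overline{B_r(x_0)}\setminus\Omega;B_{2r}(x_0)\right) > \gamma\,\mathrm{cap}_p\!\left(\overline{B_r(x_0)};B_{2r}(x_0)\right) \ge \gamma\,\mathrm{cap}_p\!\left(\overline{B_r(x_0)}\right),
\]
using the leftmost inequality of Lemma~\ref{lm:relative_absolute} on the rightmost term. On the other hand, the rightmost inequality of the same lemma applied to $\Sigma=\overline{B_r(x_0)}\setminus\Omega$ yields
\[
\mathrm{cap}_p\!\left(\overline{B_r(x_0)}\setminus\Omega;B_{2r}(x_0)\right) \le K\,\mathrm{cap}_p\!\left(\overline{B_r(x_0)}\setminus\Omega\right).
\]
Combining the two displays and dividing by $K$ gives $\mathrm{cap}_p(\overline{B_r(x_0)}\setminus\Omega)>(\gamma/K)\,\mathrm{cap}_p(\overline{B_r(x_0)})$ for every $x_0$, hence $r \ge R^{\mathrm{MS}}_{p,\gamma/K}(\Omega)$ by Definition~\ref{def:ms-capin}. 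Taking the infimum over such $r$ and setting $\alpha:=1/K\le 1$ yields the first inequality.

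For the second bound, I fix $r>R^{\mathrm{MS}}_{p,\beta\gamma}(\Omega)$, so that for every $x_0$,
\[
\mathrm{cap}_p\!\left(\overline{B_r(x_0)}\setminus\Omega\right) > \beta\,\gamma\,\mathrm{cap}_p\!\left(\overline{B_r(x_0)}\right).
\]
Using $\mathrm{cap}_p(\Sigma)\le\mathrm{cap}_p(\Sigma;B_{2r}(x_0))$ on the left and the identity $\mathrm{cap}_p(\overline{B_r(x_0)})=A(N,p)^{-1}\,\mathrm{cap}_p(\overline{B_r(x_0)};B_{2r}(x_0))$ on the right, I obtain
\[
\mathrm{cap}_p\!\left(\overline{B_r(x_0)}\setminus\Omega;B_{2r}(x_0)\right) > \frac{\beta\,\gamma}{A(N,p)}\,\mathrm{cap}_p\!\left(\overline{B_r(x_0)};B_{2r}(x_0)\right).
\]
Choosing $\beta := \max\{1,A(N,p)\}$ and restricting to $0<\gamma<1/\beta$, this means exactly that no ball $B_r(x_0)$ realizes the capacitary condition for $R_{p,\gamma}(\Omega)$, so $r\ge R_{p,\gamma}(\Omega)$, and the second inequality follows by arbitrariness of $r$.

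The asymptotics are then essentially bookkeeping: as $p\nearrow N$ with $N\ge2$, the sharp Sobolev constant blows up as $\mathcal{S}_{N,p}\sim C_N(N-p)^{-(p-1)}$ (see \cite{TaG}), so $K\to+\infty$ and thus $\alpha=1/K\to 0$; on the other side, $A(N,p)\sim((p-1)/((N-p)\log 2))^{p-1}\to+\infty$, so $\beta\to+\infty$. I expect the only technical point worth double-checking to be the threshold $0<\gamma<1/\beta$ in the second inequality, which is harmless because $\beta\gamma<1$ keeps the target MS negligibility condition non-trivial; everything else is a direct consequence of the capacity comparison lemmas already proved.
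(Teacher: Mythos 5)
Your proposal is correct and follows essentially the same route as the paper: the first inequality via the two-sided comparison of Lemma \ref{lm:relative_absolute} (resp.\ Lemma \ref{lemma:relative_absolute1d} for $p=N=1$) applied with $E=B_{2r}(x_0)$ and $\mathrm{dist}(\Sigma,\partial B_{2r}(x_0))\ge r$, giving $\alpha=1/K$; the second via the exact ratio $\beta=\mathrm{cap}_p(\overline{B_1};B_2)/\mathrm{cap}_p(\overline{B_1})=\bigl(1-2^{-(N-p)/(p-1)}\bigr)^{1-p}$, with the same asymptotics as $p\nearrow N$ driven by $\mathcal{S}_{N,p}\to\infty$ and by the degeneration of this ratio. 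The only (immaterial) difference is your constant $2^{p}\,\omega_N^{p/N}$ in $K$, which is in fact the correct evaluation of $|B_{2r}|^{p/N}/r^p$.
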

\begin{proof}
We can confine ourselves to consider the case $N\ge 2$, the case $N=1$ can be handled in exactly the same manner, by using Lemma \ref{lemma:relative_absolute1d} in place of Lemma \ref{lm:relative_absolute}.
\par
	In order to prove the first inequality, we take $0<\gamma<1$ and we can suppose that $R_{p,\gamma}(\Omega) < +\infty$. Let $r > R_{p,\gamma}(\Omega)$, this implies that 
	\[
		\mathrm{cap}_p(\overline{B_r(x_0)} \setminus \Omega; B_{2r}(x_0)) > \gamma\, \mathrm{cap}_p(\overline{B_r(x_0)}; B_{2r}(x_0)), \qquad \text{for every}\ x_0 \in \mathbb{R}^N. 
	\]	
	By using Lemma \ref{lm:relative_absolute}, this in particular entails that   
	\[
	\gamma\,\mathrm{cap}_p\left(\overline{B_{r}(x_0)}\right) < 2^{p-1}\,\left(1+\left(2\omega_N\right)^\frac{p}{N}\,\mathcal{S}_{N,p}\right)\,\mathrm{cap}_p\left(\overline{B_r(x_0)}\setminus \Omega\right), \qquad \mbox{ for every } x \in \mathbb{R}^N.
	\]
	Thus, by the arbitrariness of $r$ and the Definition \ref{def:ms-capin}, we can infer that
	\[
	R^{\rm MS}_{p,\alpha \cdot \gamma}(\Omega) \le R_{p, \gamma}(\Omega),
	\]
	where $\alpha$ is given by 
	\[
	\alpha:= 2^{1-p}\,\left(1+ (2\,\omega_N)^{\frac{p}{N}}\,\mathcal{S}_{N,p}\right)^{-1}. 
	\]
	Observe that $\alpha\le 1$ and for $N\ge 2$ it converges to $0$, as $p$ goes to $N$.
	\vskip.2cm\noindent
	In order to prove the second inequality, we set 
	\[
	\beta:= \frac{\mathrm{cap}_p(\overline{B_1}; B_2)}{\mathrm{cap}_p(\overline{B_1})}=\left(1-\left(\dfrac{1}{2}\right)^\frac{N-p}{p-1}\right)^{1-p},\qquad \text{if}\ 1<p<N,
	\] 
	and $\beta=1$ in the case $p=1$.
	We take $0<\gamma<1/\beta$ and suppose that $R^{\rm MS}_{p, \beta\cdot\gamma}(\Omega) < +\infty$, otherwise there is nothing to prove. We take any $r > R^{\rm MS}_{p, \beta\cdot\gamma}(\Omega)$, then thanks to the definition of $\beta$ we have
	\begin{equation*} \label{eqn:inner-cubic-def}
		\begin{split}
		\mathrm{cap}_p(\overline{B_r(x_0)} \setminus \Omega; B_{2r}(x_0)) &\ge \mathrm{cap}_p\left(\overline{B_r(x_0)} \setminus \Omega\right) \\
		&> \beta\,\gamma\,\mathrm{cap}_p\left(\overline{B_r(x_0)}\right) = \gamma\, \mathrm{cap}_p(\overline{B_r(x_0)}; B_{2r}(x_0)), 
	\end{split}
	\end{equation*}
	for every $x_0 \in \mathbb{R}^N$, where the second inequality comes from the Definition \ref{def:ms-capin}. In particular, we have that $r > R_{p, \gamma}(\Omega)$. By the arbirtrariness of $r$, we eventually conclude that $R^{\rm MS}_{p, \beta\cdot\gamma}(\Omega) \ge R_{p, \gamma}(\Omega)$.  
\end{proof}
\begin{open}
For $1\le p<N$, prove that there exist two constants $c_1=c_1(N,p,\gamma),c_2=c_2(N,p,\gamma)>0$ such that
\[
c_1\,R_{p,\gamma}^{\rm MS}(\Omega)\le R_{p,\gamma}(\Omega)\le c_2\,R_{p,\gamma}^{\rm MS}(\Omega),\qquad \text{\it for every}\ 0<\gamma<1.
\]
For $p=2$, this can be simply obtained by combining \eqref{capin} with the two-sided estimate by Maz'ya and Shubin, i.e. \cite[Theorem 1.1]{MS}. For a general $p$, one could proceed similarly: one then needs a generalization of their result, i.e. a two-sided estimate on $\lambda_p(\Omega)$ in terms of $R^{\rm MS}_{p,\gamma}(\Omega)$.
\end{open}

\subsection{Gallagher inradius}
We need at first the following alternative notion of absolute capacity.
\begin{definition}
Let $1\le p<\infty$, for every $\Sigma\subseteq\mathbb{R}^N$ compact set we define its {\it absolute inhomogeneous $p-$capacity} as 
\[
\mathscr{C}_p(\Sigma)=\inf_{\varphi\in C^\infty_0(\mathbb{R}^N)} \left\{\int_{\mathbb{R}^N} |\nabla \varphi|^p\,dx+\int_{\mathbb{R}^N} |\varphi|^p\,dx\, :\, \varphi\ge 1\ \text{on}\ \Sigma\right\}.
\]
\end{definition}
Observe that the two norms appearing in the minimization problem scale differently. Accordingly, this quantity does not enjoy a scaling relation\footnote{This explains the name {\it inhomogeneous}.}. Nevertheless, we have the following simple relation for the $p-$capacity of rescaled sets.
\begin{lemma}
\label{lm:nonno}
Let $1\le p<\infty$, for every $\Sigma\subseteq\mathbb{R}^N$ compact set and every $t>0$, we have
\[
\min\{t^{N-p},t^{N}\}\,\mathscr{C}_p(\Sigma)\le \mathscr{C}_p(t\,\Sigma)\le \max\{t^{N-p},\,t^{N}\}\,\mathscr{C}_p(\Sigma).
\]
\end{lemma}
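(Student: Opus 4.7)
The plan is to reduce both inequalities to a single scaling identity for a test function, and then to recognize that the obstruction to a clean scaling law is precisely the mismatched homogeneity of the two terms $\int|\nabla\varphi|^p$ and $\int|\varphi|^p$. Since the infimum is taken over the same class of admissible competitors (translations-and-dilations-invariant, up to reparametrization), I expect no analytical difficulty: only a careful bookkeeping of the two scaling exponents $N-p$ and $N$.

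For the upper bound, I would start from an arbitrary $\varphi\in C^\infty_0(\mathbb{R}^N)$ with $\varphi\ge 1$ on $\Sigma$, and set $\psi(x):=\varphi(x/t)$. Then $\psi\in C^\infty_0(\mathbb{R}^N)$ and $\psi\ge 1$ on $t\,\Sigma$, so it is admissible in the problem defining $\mathscr{C}_p(t\Sigma)$. A direct change of variables yields
\[
\int_{\mathbb{R}^N}|\nabla\psi|^p\,dx=t^{N-p}\int_{\mathbb{R}^N}|\nabla\varphi|^p\,dy,\qquad \int_{\mathbb{R}^N}|\psi|^p\,dx=t^{N}\int_{\mathbb{R}^N}|\varphi|^p\,dy.
\]
Bounding each factor by $\max\{t^{N-p},t^N\}$ and summing gives $\mathscr{C}_p(t\Sigma)\le \max\{t^{N-p},t^N\}\,\big(\|\nabla\varphi\|_{L^p}^p+\|\varphi\|_{L^p}^p\big)$; taking the infimum over $\varphi$ yields the rightmost inequality.

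For the lower bound, I would repeat the argument in reverse: start from an admissible $\psi$ for $t\,\Sigma$ and consider $\varphi(y):=\psi(t\,y)$, which is admissible for $\Sigma$. The same change of variables produces the inverse scalings $t^{p-N}$ and $t^{-N}$ in front of the two terms, and bounding both by $\max\{t^{p-N},t^{-N}\}=1/\min\{t^{N-p},t^N\}$ gives $\mathscr{C}_p(\Sigma)\le \big(\min\{t^{N-p},t^N\}\big)^{-1}\,\mathscr{C}_p(t\Sigma)$, which is equivalent to the leftmost inequality.

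There is no real obstacle here; the only point to watch is the correct exponent: for $t\ge 1$ the Dirichlet term dominates the scaling (factor $t^{N-p}$ if $p\le N$, $t^N$ if $p>N$), while for $t\le 1$ the roles are reversed, and this is exactly what the $\max/\min$ formulation records in a unified way, regardless of the relative position of $p$ and $N$. The two one-line computations above, together with taking infima, constitute the whole proof.
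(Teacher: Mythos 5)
Your proof is correct and follows essentially the same route as the paper: rescale an admissible test function in each direction, use the change of variables to pick up the factors $t^{N-p}$ and $t^{N}$ (respectively their reciprocals), bound both by the appropriate $\max$, and take infima. The observation that $\max\{t^{p-N},t^{-N}\}=\big(\min\{t^{N-p},t^{N}\}\big)^{-1}$ is exactly how the paper converts the reverse estimate into the stated lower bound.
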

\begin{proof}
Let $\varphi\in C^\infty_0(\mathbb{R}^N)$ be such that $\varphi\ge 1$ on $\Sigma$. Then the rescaled function $\varphi_t(x)=\varphi(x/t)$ belongs to $C^\infty_0(\mathbb{R}^N)$ and is such that $\varphi_t\ge 1$ on $t\,\Sigma$. Thus, with a change of variable we get
\[
\begin{split}
\mathscr{C}_p(t\,\Sigma)&\le t^{N-p}\,\int_{\mathbb{R}^N} |\nabla\varphi|^p\,dx+t^{N}\int_{\mathbb{R}^N} |\varphi|^p\,dx\\
&\le \max\{t^{N-p},t^N\}\,\left[\int_{\mathbb{R}^N} |\nabla\varphi|^p\,dx+\int_{\mathbb{R}^N} |\varphi|^p\,dx\right].		
\end{split}
\]
By taking the infimum over $\varphi$, we get the upper bound. 
\par
For the lower bound, we proceed in a similar way. Let $\varphi\in C^\infty_0(\mathbb{R}^N)$ be such that $\varphi\ge 1$ on $t\,\Sigma$. Then the rescaled function $\varphi_t(x)=\varphi(t\,x)$ belongs to $C^\infty_0(\mathbb{R}^N)$ and is such that $\varphi_t\ge 1$ on $\Sigma$. Similarly as before, we get
\[
\begin{split}
\mathscr{C}_p(\Sigma)&\le t^{p-N}\,\int_{\mathbb{R}^N} |\nabla\varphi|^p\,dx+t^{-N}\int_{\mathbb{R}^N} |\varphi|^p\,dx\\
&\le \max\{t^{p-N},t^{-N}\}\,\left[\int_{\mathbb{R}^N} |\nabla\varphi|^p\,dx+\int_{\mathbb{R}^N} |\varphi|^p\,dx\right].		
\end{split}
\]
By arbitrariness of $\varphi$, we conclude.
\end{proof}
The comparison between this $p-$capacity and the relative one is contained for example in \cite[Proposition 13.1.1/2]{Maz}. We repeat its simple proof, so to make precise the constants appearing in the estimate.
\begin{lemma}
\label{lm:ammazya!}
Let $1\le p<\infty$ and let $\Sigma\subseteq\mathbb{R}^N$ be a compact set. 
If $\Sigma\Subset E$ with $E\subseteq \mathbb{R}^N$ open bounded set, we have 
\[
\frac{\lambda_p(E)}{1+\lambda_p(E)}\,\mathscr{C}_p(\Sigma)\le \mathrm{cap}_p(\Sigma;E)\le 2^{p-1}\,\max\left\{1,\frac{1}{(\mathrm{dist}(\Sigma,\partial E))^p}\right\}\,\mathscr{C}_p(\Sigma).
\]
\end{lemma}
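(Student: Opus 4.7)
The plan is to follow the same template as in the proof of Lemma \ref{lm:relative_absolute}, treating the two directions separately. The key new ingredient is that, since $\mathscr{C}_p$ already includes the full $L^p$ norm of the test function, the rightmost inequality will be obtained without invoking Sobolev's inequality (which is precisely what caused the restriction $p<N$ in Lemma \ref{lm:relative_absolute}), while the leftmost inequality will follow essentially from Poincar\'e's inequality on $E$ alone.

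For the upper bound, I would start by taking any $\varphi\in C^\infty_0(\mathbb{R}^N)$ with $\varphi\ge 1$ on $\Sigma$ and, for every $0<\delta<\mathrm{dist}(\Sigma,\partial E)$, the same Lipschitz cut-off $\eta_\delta$ as in \eqref{telostronco}, i.e.\ compactly supported in $E$, identically $1$ on $\Sigma$, and satisfying $|\nabla\eta_\delta|\le 1/(\mathrm{dist}(\Sigma,\partial E)-\delta)$. The product $\varphi\,\eta_\delta$ is an admissible (Lipschitz, compactly supported in $E$) competitor for $\mathrm{cap}_p(\Sigma;E)$, so applying $(a+b)^p\le 2^{p-1}(a^p+b^p)$ to $|\nabla(\varphi\eta_\delta)|^p$ gives
\[
\mathrm{cap}_p(\Sigma;E)\le 2^{p-1}\int_{\mathbb{R}^N}|\nabla\varphi|^p\,dx+\frac{2^{p-1}}{(\mathrm{dist}(\Sigma,\partial E)-\delta)^p}\int_{\mathbb{R}^N}|\varphi|^p\,dx.
\]
Factoring out $2^{p-1}\max\{1,(\mathrm{dist}(\Sigma,\partial E)-\delta)^{-p}\}$ and sending $\delta\to 0$, then taking the infimum over $\varphi$, yields the claimed rightmost inequality.

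For the lower bound, I would start from any $\varphi\in C^\infty_0(E)$ with $\varphi\ge 1$ on $\Sigma$. Extending $\varphi$ by zero outside $E$ produces a function in $C^\infty_0(\mathbb{R}^N)$ (it is smooth since it vanishes in a neighbourhood of $\partial E$), hence admissible for $\mathscr{C}_p(\Sigma)$. Therefore
\[
\mathscr{C}_p(\Sigma)\le \int_E|\nabla\varphi|^p\,dx+\int_E|\varphi|^p\,dx,
\]
and the Poincar\'e inequality on $E$ (here we use $\lambda_p(E)>0$, which holds since $E$ is bounded) bounds the second term by $\lambda_p(E)^{-1}\int_E|\nabla\varphi|^p\,dx$. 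Hence
\[
\mathscr{C}_p(\Sigma)\le \frac{1+\lambda_p(E)}{\lambda_p(E)}\int_E|\nabla\varphi|^p\,dx,
\]
and taking the infimum over $\varphi$ gives the leftmost inequality.

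No step appears delicate; the only point to be careful with is the admissibility of the extended test function in the lower bound (which is granted by the compact support inside $E$) and the correct bookkeeping of the $\max\{1,\,\mathrm{dist}(\Sigma,\partial E)^{-p}\}$ factor in the upper bound, which accounts for situations in which $\mathrm{dist}(\Sigma,\partial E)$ is large (so that the $L^p$ contribution in $\mathscr{C}_p$ dominates) versus small (so that the cut-off piece does).
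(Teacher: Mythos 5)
Your proposal is correct and follows essentially the same route as the paper: the lower bound is obtained by viewing an admissible $\varphi\in C^\infty_0(E)$ as a competitor for $\mathscr{C}_p(\Sigma)$ and absorbing the $L^p$ term via the Poincar\'e inequality on $E$, and the upper bound uses the same cut-off $\eta_\delta$ from \eqref{telostronco} together with the elementary inequality $(a+b)^p\le 2^{p-1}(a^p+b^p)$. The only cosmetic difference is that you spell out the limit $\delta\to 0$ and the bookkeeping of the $\max$ factor, which the paper leaves to the reader.
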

\begin{proof}
We take $\varphi\in C^\infty_0(E)$ such that $\varphi\ge 1$ on $\Sigma$. Thus, by definition of $\mathscr{C}_p(\Sigma)$ and Poincar\'e inequality, we get
\[
\begin{split}
\mathscr{C}_p(\Sigma)&\le \int_{\mathbb{R}^N} |\nabla \varphi|^p\,dx+\int_{\mathbb{R}^N} |\varphi|^p\,dx\le \left(1+\frac{1}{\lambda_p(E)}\right)\, \int_{\mathbb{R}^N} |\nabla \varphi|^p\,dx.
\end{split}
\]
By arbitrariness of $\varphi$, we get the first inequality. 
\par
For the second inequality, we take $\varphi\in C^\infty_0(\mathbb{R}^N)$ such that $\varphi\ge 1$ on $\Sigma$ and the cut-off function $\eta_\delta$ defined in \eqref{telostronco}. By using $\varphi\,\eta_\delta$, we get
\[
\mathrm{cap}_p(\Sigma;E)\le 2^{p-1}\,\int_{\mathbb{R}^N} |\nabla\varphi|^p\,\eta_\delta^p\,dx+2^{p-1}\,\int_{\mathbb{R}^N} |\nabla\eta_\delta|^p\,|\varphi|^p\,dx.
\]
By using the properties of $\eta_\delta$ and recalling the definition of $\mathscr{C}_p(\Sigma)$, we easily get the conclusion.
\end{proof}
\begin{remark}
An inspection of the previous proof reveals again that the boundedness assumption on $E$ is not really needed. It would be sufficient to suppose for example that $E$ supports the $p-$Poincar\'e inequality. This condition is somehow necessary, as well: take for example $E=\mathbb{R}^N$ and $\Sigma=\overline{B_r}$. Then, we have 
\[
\mathrm{cap}_p(\overline{B_r};\mathbb{R}^N)=\mathrm{cap}_p(\overline{B_r}).
\]
For $p\ge N\ge 2$ we have already said that this is zero, for every $r>0$. For $1\le p<N$, we have 
\[
\mathrm{cap}_p(\overline{B_r})=N\,\omega_N\, \left(\frac{N-p}{p-1}\right)^{p-1}\,r^{N-p},
\]
it is sufficient to take the limit as $R$ goes to $\infty$ in \eqref{eqn:cap-ball0}, \eqref{eqn:cap-ball}.
On the other hand, by proceeding as in the proof of Lemma \ref{lemma:2}, we easily see that
\[
\mathscr{C}_p(\overline{B_r})\ge |\overline{B_r}|=\omega_N\,r^N.
\]
Thus, in any case we get that
\[
\lim_{r\to+\infty} \frac{\mathrm{cap}_p(\overline{B_r})}{\mathscr{C}_p(\overline{B_r})}=0.
\]
\end{remark}
Based on the previous $p-$capacity, in \cite{Ga1, Ga2} the following capacitary inradius is introduced. The definition is inspired by a related quantity defined by Souplet in \cite[Section 2]{Sou}, using Lebesgue measure in place of $p-$capacity.
\begin{definition}
Let $1\le p<\infty$. For an open set $\Omega\subseteq\mathbb{R}^N$ we defined its {\it Gallagher capacitary inradius} by 
\[
R^{\rm G}_{p}(\Omega):=\sup\Big\{r>0\, :\, \forall \varepsilon>0,\ \exists x_0\in\mathbb{R}^N\ \text{such that}\ \mathscr{C}_p\left(\overline{B_r(x_0)}\setminus\Omega\right)< \varepsilon\Big\}.
\]
Observe that we have also included the case $p=1$, which was not considered in \cite{Ga2}.
\end{definition}
Since this definition is based on an inhomogeneous concept of $p-$capacity, it is not clear whether $R^{\rm G}_p$ enjoys some scaling rule or not. 
This is actually the case, as shown in the following
\begin{lemma}
Let $1\le p<\infty$ and let $\Omega \subseteq \mathbb{R}^N$ be an open set. Then for every $t>0$ we have
\[
R^{\rm G}_p(t\,\Omega)= t\,R^{\rm G}_p(\Omega)
\]
\end{lemma}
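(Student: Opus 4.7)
My plan is to show the two inequalities $t\,R^{\rm G}_p(\Omega)\le R^{\rm G}_p(t\,\Omega)$ and $R^{\rm G}_p(t\,\Omega)\le t\,R^{\rm G}_p(\Omega)$ separately, each by unwinding the definition of $R^{\rm G}_p$ and exploiting Lemma \ref{lm:nonno}. The key geometric observation is the elementary identity
\[
\overline{B_{tr}(t\,x_0)}\setminus t\,\Omega = t\,\Big(\overline{B_r(x_0)}\setminus \Omega\Big),\qquad t>0,\ x_0\in\mathbb{R}^N,
\]
which turns a rescaled configuration into a rescaled set, onto which the scaling estimates of Lemma \ref{lm:nonno} can be applied.

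For the forward inequality, I would fix $0<r<R^{\rm G}_p(\Omega)$ and any $\varepsilon>0$. The factor $M(t):=\max\{t^{N-p},t^N\}$ is a finite, strictly positive constant for $t>0$, so the definition of $R^{\rm G}_p(\Omega)$ provides a point $x_0\in\mathbb{R}^N$ with
\[
\mathscr{C}_p\Big(\overline{B_r(x_0)}\setminus\Omega\Big)<\frac{\varepsilon}{M(t)}.
\]
Applying Lemma \ref{lm:nonno} to the compact set $\overline{B_r(x_0)}\setminus\Omega$ and using the identity above, I obtain
\[
\mathscr{C}_p\Big(\overline{B_{tr}(t\,x_0)}\setminus t\,\Omega\Big)\le M(t)\,\mathscr{C}_p\Big(\overline{B_r(x_0)}\setminus\Omega\Big)<\varepsilon.
\]
Since $\varepsilon$ was arbitrary, this shows that $t\,r$ is an admissible radius for $t\,\Omega$, hence $t\,r\le R^{\rm G}_p(t\,\Omega)$. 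Taking the supremum over $r<R^{\rm G}_p(\Omega)$ gives the inequality $t\,R^{\rm G}_p(\Omega)\le R^{\rm G}_p(t\,\Omega)$.

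For the reverse inequality, I would simply apply what has just been proved, with $\Omega$ replaced by $t\,\Omega$ and $t$ replaced by $1/t$: this yields $(1/t)\,R^{\rm G}_p(t\,\Omega)\le R^{\rm G}_p(\Omega)$, which is precisely the desired bound. I do not anticipate any real obstacle in this proof: the only thing that might look suspicious at first glance is that $\mathscr{C}_p$ is inhomogeneous, but the universal quantifier ``$\forall\varepsilon>0$'' in the definition of the Gallagher inradius makes the multiplicative constant $M(t)$ harmless, since we can absorb it by shrinking $\varepsilon$. Thus the inhomogeneity of $\mathscr{C}_p$ is invisible at the level of $R^{\rm G}_p$, and the quantity scales exactly like a length.
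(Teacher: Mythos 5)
Your proof is correct and follows essentially the same route as the paper: the identity $\overline{B_{tr}(t\,x_0)}\setminus t\,\Omega = t\,(\overline{B_r(x_0)}\setminus\Omega)$ combined with Lemma \ref{lm:nonno}, with the constant $\max\{t^{N-p},t^N\}$ absorbed thanks to the universal quantifier on $\varepsilon$. The only differences are cosmetic: you deduce the reverse inequality by substituting $(t\,\Omega,1/t)$ into the forward one instead of re-running the symmetric argument, and your assertion that every $r<R^{\rm G}_p(\Omega)$ is itself an admissible radius implicitly uses the (immediate) monotonicity of $\mathscr{C}_p$ under set inclusion, whereas the paper sidesteps this by working with admissible radii $r_\delta$ approaching the supremum.
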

\begin{proof}
For every $\delta>0$, there exists $R^{\rm G}_p(t\,\Omega)-\delta<r_\delta<R^{\rm G}_p(t\,\Omega)$. Thus, for every $\varepsilon>0$ there exists a point $x_0\in\mathbb{R}^N$ such that
\[
\mathscr{C}_p(\overline{B_{r_\delta}(x_0)}\setminus(t\,\Omega))<\varepsilon.
\]
Observe that
\[
\overline{B_{r_\delta}(x_0)}\setminus(t\,\Omega)=\overline{t\,B_{{r_\delta}/t}(x_0/t)}\setminus(t\,\Omega)=t\,\left(\overline{B_{{r_\delta}/t}(x_0/t)}\setminus\Omega\right),
\]
thus we have in particular
\[
\mathscr{C}_p\left(\overline{B_{{r_\delta}/t}(x_0/t)}\setminus\Omega\right)<\frac{1}{\min\{t^N,\,t^{N-p}\}}\,\varepsilon,
\]
thanks to Lemma \ref{lm:nonno}.
By arbitrariness of $\varepsilon>0$, this entails that
\[
\frac{R^{\rm G}_p(t\,\Omega)-\delta}{t}<\frac{r_\delta}{t}\le R^{\rm G}_p(\Omega).
\]
Finally, by taking the limit as $\delta$ goes to $0$, we obtain
\[
\frac{R^{\rm G}_p(t\,\Omega)}{t}\le R^{\rm G}_p(\Omega).
\]
To prove the reverse inequality, we can proceed similarly. For every $\delta>0$, there exists $R^{\rm G}_p(\Omega)-\delta<r_\delta<R^{\rm G}_p(\Omega)$. Thus, for every $\varepsilon>0$ there exists a point $x_0\in\mathbb{R}^N$ such that
\[
\mathscr{C}_p(\overline{B_{r_\delta}(x_0)}\setminus \Omega)<\varepsilon.
\]
Observe that
\[
\overline{B_{r_\delta}(x_0)}\setminus\Omega=\frac{1}{t}\,t\,\left(\overline{B_{{r_\delta}}(x_0)}\setminus \Omega\right)=\frac{1}{t}\,\left(\overline{B_{t\,{r_\delta}}(t\,x_0)}\setminus(t\,\Omega)\right),
\]
thus this time we have
\[
\min\left\{\left(\frac{1}{t}\right)^{N-p},\left(\frac{1}{t}\right)^{N}\right\}\,\mathscr{C}_p\left(\overline{B_{t\,{r_\delta}}(t\,x_0)}\setminus(t\,\Omega)\right)<\,\varepsilon.,
\]
again by Lemma \ref{lm:nonno}.
By arbitrariness of $\varepsilon>0$, this entails that
\[
t\,(R^{\rm G}_p(\Omega)-\delta)<t\,r_\delta\le R^{\rm G}_p(t\,\Omega).
\]
By taking the limit as $\delta$ goes to $0$, we conclude.
\end{proof}
The capacitary inradius $R^{\rm G}_p$ admits the following equivalent definition, in terms of the relative $p-$capacity. The following is a particular case of \cite[Lemma 2.11]{Ga2}.
\begin{lemma}
\label{lm:equivalent}
Let $1\le p<\infty$. For every open set $\Omega\subseteq\mathbb{R}^N$ we have
\[
R^{\rm G}_{p}(\Omega):=\sup\Big\{r>0\, :\, \forall \varepsilon>0,\ \exists x_0\in\mathbb{R}^N\ \text{such that}\ \mathrm{cap}_p\left(\overline{B_r(x_0)}\setminus\Omega;B_{2r}(x_0)\right)< \varepsilon\Big\}.
\]
\end{lemma}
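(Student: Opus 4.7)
The plan is to show that the two families of admissible radii defining $R^{\rm G}_p(\Omega)$ and the right-hand side coincide as \emph{sets}, which then trivially implies equality of their suprema. The whole content is a direct application of the two-sided comparison between $\mathscr{C}_p$ and the relative capacity provided by Lemma \ref{lm:ammazya!}, with $\Sigma=\overline{B_r(x_0)}\setminus\Omega$ and $E=B_{2r}(x_0)$. Observe that $\Sigma$ is compact (closed inside $\overline{B_r(x_0)}$) and $\Sigma\Subset B_{2r}(x_0)$, with the explicit distance bound $\mathrm{dist}(\Sigma,\partial B_{2r}(x_0))\ge r$. Moreover, by translation invariance $\lambda_p(B_{2r}(x_0))=\lambda_p(B_1)/(2r)^p$ depends only on $r$, so both sides of Lemma \ref{lm:ammazya!} become constants that are uniform in $x_0$.

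Denote by $A$ the set of admissible radii in the original definition of $R^{\rm G}_p(\Omega)$, and by $B$ the analogous set appearing on the right-hand side. I would first show $A\subseteq B$. Fix $r\in A$ and $\varepsilon>0$. Set $\eta=\varepsilon\,\bigl(2^{p-1}\,\max\{1,r^{-p}\}\bigr)^{-1}$. By definition of $A$ there is $x_0\in\mathbb{R}^N$ with $\mathscr{C}_p(\overline{B_r(x_0)}\setminus\Omega)<\eta$, and the rightmost inequality of Lemma \ref{lm:ammazya!}, together with $\mathrm{dist}(\Sigma,\partial B_{2r}(x_0))\ge r$, yields
\[
\mathrm{cap}_p(\overline{B_r(x_0)}\setminus\Omega;B_{2r}(x_0))\le 2^{p-1}\,\max\{1,r^{-p}\}\,\eta<\varepsilon.
\]
Hence $r\in B$.

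For the reverse inclusion $B\subseteq A$, fix $r\in B$ and $\varepsilon>0$. Introduce the constant
\[
c_r:=\frac{1+\lambda_p(B_{2r})}{\lambda_p(B_{2r})}=1+\frac{(2r)^p}{\lambda_p(B_1)},
\]
which is finite and independent of $x_0$. By definition of $B$, there exists $x_0\in\mathbb{R}^N$ with $\mathrm{cap}_p(\overline{B_r(x_0)}\setminus\Omega;B_{2r}(x_0))<\varepsilon/c_r$, and then the leftmost inequality of Lemma \ref{lm:ammazya!} gives
\[
\mathscr{C}_p(\overline{B_r(x_0)}\setminus\Omega)\le c_r\,\mathrm{cap}_p(\overline{B_r(x_0)}\setminus\Omega;B_{2r}(x_0))<\varepsilon.
\]
Thus $r\in A$, which proves $A=B$ and the claimed identity follows by taking suprema.

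I do not expect any serious obstacle: the only slightly delicate point is noticing that both constants produced by Lemma \ref{lm:ammazya!} are independent of the translation $x_0$ (so that the $\forall\varepsilon\,\exists x_0$ quantifier structure survives), and that the distance from $\overline{B_r(x_0)}\setminus\Omega$ to $\partial B_{2r}(x_0)$ can be bounded below in a uniform, purely geometric way by $r$. Once these two remarks are made, the rest is just a transcription of the comparison lemma.
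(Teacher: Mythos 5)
Your proof is correct and rests on exactly the same key tool as the paper's: the two-sided comparison of Lemma \ref{lm:ammazya!} applied with $\Sigma=\overline{B_r(x_0)}\setminus\Omega$ and $E=B_{2r}(x_0)$, together with the observations that $\mathrm{dist}(\Sigma,\partial B_{2r}(x_0))\ge r$ and that $\lambda_p(B_{2r}(x_0))$ is independent of $x_0$, so both constants are uniform in the center. The only difference is organizational: you prove that the two sets of admissible radii coincide exactly and then take suprema, which is a slightly cleaner bookkeeping than the paper's argument via radii $r_\delta$ approximating the supremum from below, but the underlying estimates are identical.
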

\begin{proof}
We suppose that $R^{\rm G}_{p}(\Omega)<+\infty$, in the case $R^{\rm G}_{p}(\Omega)=+\infty$ the argument below can be easily adapted.
For every $\delta>0$, by definition there exists $R^{\rm G}_p(\Omega)-\delta<r_\delta<R^{\rm G}_p(\Omega)$ such that for every $\varepsilon>0$ we have 
\[
\mathscr{C}_p(\overline{B_{r_\delta}(x_0)}\setminus\Omega)<\varepsilon,
\]
for some $x_0\in\mathbb{R}^N$. In particular, 
from Lemma \ref{lm:ammazya!} with $\Sigma=\overline{B_{r_\delta}(x_0)}\setminus\Omega$ and $E=B_{2r_\delta}(x_0)$, we get
\[
\mathrm{cap}_p\left(\overline{B_{r_\delta}(x_0)}\setminus\Omega;B_{2r_\delta}(x_0)\right)< \varepsilon\,2^{p-1}\,\max\left\{1,\frac{1}{r_\delta}\right\}.
\]
By arbitrariness of $\varepsilon>0$, this implies that
\[
\sup\Big\{r>0\, :\, \forall \varepsilon>0,\ \exists x_0\in\mathbb{R}^N\ \text{such that}\ \mathrm{cap}_p\left(\overline{B_r(x_0)}\setminus\Omega;B_{2r}(x_0)\right)< \varepsilon\Big\}\ge r_\delta.
\]
By recalling that $R^{\rm G}_p(\Omega)-\delta<r_\delta<R^{\rm G}_p(\Omega)$, the arbitrariness of $\delta$ implies that
\[
\sup\Big\{r>0\, :\, \forall \varepsilon>0,\ \exists x_0\in\mathbb{R}^N\ \text{such that}\ \mathrm{cap}_p\left(\overline{B_r(x_0)}\setminus\Omega;B_{2r}(x_0)\right)< \varepsilon\Big\}\ge R^{\rm G}_{p}(\Omega).
\]
The reverse inequality can be proved in a similar way, by using again Lemma \ref{lm:ammazya!}.
\end{proof}

About the comparison between $R_{p,\gamma}$ and $R^{\rm G}_p$, we have the following result, which is essentially contained in \cite{Ga2}, up to some technical adjustments in the proof.
\begin{proposition}
\label{prop:GvsUs}
Let $1\le p\le N$, then for every $\Omega\subseteq\mathbb{R}^N$ open set we have
\[
R^{\rm G}_p(\Omega)= \inf_{0<\gamma<1} R_{p,\gamma}(\Omega)=\lim_{\gamma\searrow 0} R_{p,\gamma}(\Omega).
\]
\end{proposition}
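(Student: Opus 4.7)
The plan is to treat the three equalities in the statement separately. First, I would check that $\gamma\mapsto R_{p,\gamma}(\Omega)$ is non-decreasing: if $\overline{B_r(x_0)}\setminus\Omega$ is $(p,\gamma_1)$-negligible, then the condition holds a fortiori for any $\gamma_2\ge\gamma_1$, so the admissible sets of radii are nested and the monotonicity follows. This immediately yields $\inf_{0<\gamma<1}R_{p,\gamma}(\Omega)=\lim_{\gamma\searrow 0}R_{p,\gamma}(\Omega)$. The inequality $R^{\rm G}_p(\Omega)\le \inf_\gamma R_{p,\gamma}(\Omega)$ follows from Lemma \ref{lm:equivalent}: if $r<R^{\rm G}_p(\Omega)$ and $\gamma\in(0,1)$, the choice $\varepsilon=\gamma\,\mathrm{cap}_p(\overline{B_r(x_0)};B_{2r}(x_0))$, a positive quantity depending only on $r$, $N$ and $p$ through \eqref{eqn:cap-ball0}--\eqref{eqn:cap-ballN}, supplies, via Lemma \ref{lm:equivalent}, a point $x_0$ witnessing the $(p,\gamma)$-negligibility of $\overline{B_r(x_0)}\setminus\Omega$, whence $r\le R_{p,\gamma}(\Omega)$.

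The core of the proof is the reverse inequality $\inf_\gamma R_{p,\gamma}(\Omega)\le R^{\rm G}_p(\Omega)$. I would fix $r<\inf_\gamma R_{p,\gamma}(\Omega)$ and aim to show that for every $\varepsilon>0$ there exists $x_0\in\mathbb{R}^N$ with $\mathscr{C}_p(\overline{B_r(x_0)}\setminus\Omega)<\varepsilon$. For each $\gamma\in(0,1)$, since $r<R_{p,\gamma}(\Omega)$, the sup-definition of $R_{p,\gamma}(\Omega)$ furnishes $r_\gamma\in(r,R_{p,\gamma}(\Omega)]$ and $x_\gamma\in\mathbb{R}^N$ such that $\overline{B_{r_\gamma}(x_\gamma)}\setminus\Omega$ is $(p,\gamma)$-negligible. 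The inclusion $\overline{B_r(x_\gamma)}\setminus\Omega\subseteq\overline{B_{r_\gamma}(x_\gamma)}\setminus\Omega$, the monotonicity of $\mathscr{C}_p$ in its set argument (immediate from the definition), and the left inequality of Lemma \ref{lm:ammazya!} applied with $E=B_{2r_\gamma}(x_\gamma)$ combine to give
\[
\mathscr{C}_p\bigl(\overline{B_r(x_\gamma)}\setminus\Omega\bigr)\le \Bigl(1+\frac{1}{\lambda_p(B_{2r_\gamma}(x_\gamma))}\Bigr)\,\gamma\,\mathrm{cap}_p\bigl(\overline{B_{r_\gamma}};B_{2r_\gamma}\bigr).
\]
Invoking the scaling $\lambda_p(B_{2r_\gamma}(x_\gamma))=\lambda_p(B_1)(2r_\gamma)^{-p}$ and the explicit formulas \eqref{eqn:cap-ball0}--\eqref{eqn:cap-ballN} for the capacity of concentric balls, the right-hand side becomes a polynomial expression in $r_\gamma$ multiplied by $\gamma$; it tends to $0$ as $\gamma\searrow 0$ whenever $r_\gamma$ stays bounded, and then choosing $x_0=x_\gamma$ for $\gamma$ small enough completes the argument.

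The main obstacle is precisely the boundedness of $r_\gamma$ as $\gamma\searrow 0$: the estimate above degrades if $r_\gamma\to+\infty$. This is settled as soon as there exists $\gamma_0\in(0,1)$ with $R_{p,\gamma_0}(\Omega)<+\infty$: by monotonicity, $r_\gamma\le R_{p,\gamma_0}(\Omega)=:M$ for every $\gamma\le\gamma_0$, and the constant in the above estimate depends only on $N$, $p$, $r$ and $M$. The complementary case $R_{p,\gamma}(\Omega)=+\infty$ for every $\gamma\in(0,1)$ corresponds to $\inf_\gamma R_{p,\gamma}(\Omega)=+\infty$, and the target identity reduces to verifying $R^{\rm G}_p(\Omega)=+\infty$; this can be handled along the same lines, exploiting the scaling properties of $\mathscr{C}_p$ recalled in Lemma \ref{lm:nonno} to confine the selection of $r_\gamma$ to a uniformly bounded range, thereby reducing it to the previous case.
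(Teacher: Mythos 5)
Your first two equalities and the inequality $R^{\rm G}_p(\Omega)\le\inf_\gamma R_{p,\gamma}(\Omega)$ match the paper's Part~1. For the reverse inequality in the case where the $R_{p,\gamma}(\Omega)$ are finite, your argument is correct but genuinely more direct than the paper's: you estimate $\mathscr{C}_p(\overline{B_r(x_\gamma)}\setminus\Omega)$ at the near-optimal witnesses $(r_\gamma,x_\gamma)$ and let $\gamma\searrow 0$, using that $\mathscr{C}_p$ is monotone in the compact set alone (so no ambient ball needs to be adjusted) together with the left inequality of Lemma \ref{lm:ammazya!} and the uniform bound $r_\gamma\le R_{p,\gamma_0}(\Omega)$. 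The paper instead argues by contradiction at the level of $R_{p,\gamma}$: it extracts from Lemma \ref{lm:equivalent} a uniform lower bound $\varepsilon_\delta$ on the relative capacity at radius $R^{\rm G}_p(\Omega)+\delta$, propagates it to all radii up to $R_{p,1/2}(\Omega)$, and rules out one branch of a dichotomy via the monotonicity in $\gamma$. Your route buys a shorter and more transparent proof of this half, at the price of having to control the witnesses $r_\gamma$.

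That control is exactly where your proposal has a genuine gap: the case $R_{p,\gamma}(\Omega)=+\infty$ for every $\gamma$. There the witnesses $r_\gamma$ furnished by the sup-definition may be forced to be arbitrarily large for every $\gamma$ (the admissible set is unbounded, but nothing in the definition guarantees it meets a fixed bounded interval $(r,M]$), and then your bound
\[
\mathscr{C}_p\bigl(\overline{B_r(x_\gamma)}\setminus\Omega\bigr)\le \Bigl(1+\tfrac{(2r_\gamma)^p}{\lambda_p(B_1)}\Bigr)\,\gamma\,r_\gamma^{N-p}\,\mathrm{cap}_p(\overline{B_1};B_2)
\]
need not tend to zero. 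Your proposed fix via Lemma \ref{lm:nonno} does not work: rescaling $\Omega$ by a factor $t$ rescales $r$, the admissible radii and $R^{\rm G}_p$ all by the same factor, so it cannot ``confine the selection of $r_\gamma$ to a uniformly bounded range''. What is actually needed is the fact that when $R_{p,\gamma_\varepsilon}(\Omega)=+\infty$ each \emph{fixed} radius $r$ itself admits a center $x_0$ with $\mathrm{cap}_p(\overline{B_r(x_0)}\setminus\Omega;B_{2r}(x_0))\le\gamma_\varepsilon\,\mathrm{cap}_p(\overline{B_r};B_{2r})$ (this is how the paper treats case (i), choosing $\gamma_\varepsilon$ in terms of $r$ and $\varepsilon$); with that ingredient your finite-case computation applies verbatim with $r_\gamma=r$. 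As written, this case is not proved.
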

\begin{proof}
The fact that 
\[
\inf_{0<\gamma<1} R_{p,\gamma}(\Omega)=\lim_{\gamma\searrow 0} R_{p,\gamma}(\Omega),
\]
simply follows from the monotonicity of $\gamma\mapsto R_{p,\gamma}(\Omega)$. Thus, it is sufficient to prove the first identity. We then divide the proof in two parts.
\vskip.2cm\noindent
{\it Part 1}. Here we show that
\begin{equation}
\label{gallagherLB}
R_{p,\gamma}(\Omega)\ge R^{\rm G}_{p}(\Omega),\qquad \text{for every}\ \gamma\in(0,1),
\end{equation}
by proceeding as in the proof of \cite[Lemma 2.14]{Ga2}.
Let us fix $\gamma\in(0,1)$. We can assume that $R^{\rm G}_{p}(\Omega)<+\infty$, in the case $R^{\rm G}_{p}(\Omega)=+\infty$ the reader will immediately see how to adapt the argument below. By definition of supremum for every $\delta>0$ there exists $R^{\rm G}_{p}(\Omega)-\delta<r_\delta<R^{\rm G}_{p}(\Omega)$ with the following property: for every $\varepsilon>0$ there exists a point $x_0\in\mathbb{R}^N$ such that 
\[
\mathrm{cap}_p(\overline{B_{r_\delta}(x_0)}\setminus\Omega;B_{2{r_\delta}}(x_0))<\varepsilon.
\]
We also used the equivalent formulation of $R^{\rm G}_p(\Omega)$ contained in Lemma \ref{lm:equivalent}.
Thus, if we use the property above with the choice 
\[
\varepsilon=\gamma\,\mathrm{cap}_p(\overline{B_{r_\delta}};B_{2{r_\delta}}),
\] 
we get existence of a center $x_0$ such that
\[
\gamma\,\mathrm{cap}_p(\overline{B_{r_\delta}(x_0)};B_{2{r_\delta}}(x_0))>\mathrm{cap}_p(\overline{B_{r_\delta}(x_0)}\setminus\Omega;B_{2{r_\delta}}(x_0)).
\]
This shows that the radius ${r_\delta}$ is admissible for the definition of $R_{p,\gamma}(\Omega)$. Thus, we obtain
\[
R_{p,\gamma}(\Omega)\ge r_\delta>R^{\rm G}_{p}(\Omega)-\delta. 
\]
By arbitrariness of $\delta>0$, this in turn implies \eqref{gallagherLB}.
\vskip.2cm\noindent
{\it Part 2}. 
In order to prove that
\begin{equation}
\label{reverso}
\lim_{\gamma\nearrow 0} R_{p,\gamma}(\Omega)\le R^{\rm G}_p(\Omega),
\end{equation}
we need to distinguish two possibilities\footnote{We recall that the condition $R_{p,\gamma}(\Omega)<+\infty$ does not depend on the particular $\gamma\in(0,1)$, thanks to \eqref{capin}.} :
\begin{itemize}
\item[(i)] either $R_{p, \gamma}(\Omega) = +\infty$ for all $0 < \gamma < 1$;
\vskip.2cm
\item[(ii)] or $R_{p, \gamma}(\Omega) <+\infty$ for all $0 < \gamma < 1$.
\end{itemize} 
If alternative (i) occurs, we claim that we must have $R^{\rm G}_p(\Omega) = +\infty$, as well. This would establish \eqref{reverso}.
Indeed, let us fix a radius $r>0$, for every $\varepsilon>0$ we set 
\[
\gamma_\varepsilon=\min\left\{\frac{1}{2},\frac{\varepsilon}{2\,\mathrm{cap}_p(\overline{B_r}; B_{2r})}\right\}=\min\left\{\frac{1}{2},\frac{\varepsilon}{2\,r^{N-p}\,\mathrm{cap}_p(\overline{B_1}; B_{2})}\right\}.
\]
Since $R_{p,\gamma_\varepsilon}(\Omega)=+\infty$, there exists a point $x_0\in\mathbb{R}^N$ such that
\[
\mathrm{cap}_p(\overline{B_r(x_0)} \setminus \Omega; B_{2r}(x_0))\le \gamma_\varepsilon\,\mathrm{cap}_p(\overline{B_r(x_0)}; B_{2r}(x_0))<\varepsilon.
\]
According to Lemma \ref{lm:equivalent}, this shows that $r\le R^{\rm G}_p(\Omega)$. By arbitrariness of $r>0$, we thus obtain $R^{\rm G}_p(\Omega)=+\infty$, as claimed.
\par 
 We now assume alternative (ii): observe that we also have $R_{p}^{\rm G}(\Omega)<+\infty$ by {\it Part 1}. We need to show that for every $0<\delta\ll 1$, there exists $0<\gamma_\delta<1$ such that
\begin{equation}
\label{santita}
R_{p,\gamma_\delta}(\Omega)\le R^{\rm G}_p(\Omega)+\delta.
\end{equation}
This would be sufficient to get \eqref{reverso}. We preliminary observe that we can suppose that
\[
R^{\rm G}_p(\Omega)< R_{p,1/2}(\Omega).
\]
Otherwise, in light of {\it Part 1}, we would have $R^{\rm G}_p(\Omega)= R_{p,1/2}(\Omega)$ and thus \eqref{santita} would follow with $\gamma_\delta\equiv1/2$.
\par
We then choose any $0<\delta<\delta_0:=R_{p,1/2}(\Omega)-R^{\rm G}_p(\Omega)$ so that we still have 
\[
R^{\rm G}_p(\Omega)+\delta<R_{p,1/2}(\Omega).
\]
Observe that since $R^{\rm G}_p(\Omega)+\delta>R_p^{\rm G}(\Omega)$, there exists an $\varepsilon_\delta>0$ such that
\[
\mathrm{cap}_p\left(\overline{B_{R^{\rm G}_p(\Omega)+\delta}(x_0)}\setminus\Omega;B_{2(R^{\rm G}_p(\Omega)+\delta)}(x_0)\right)
\ge \varepsilon_\delta,\qquad \text{for every}\ x_0\in\mathbb{R}^N,
\]
thanks to the equivalent formulation of Lemma \ref{lm:equivalent}.
Thus, from Lemma \ref{lemma:3} we get for every $r>R^{\rm G}_p(\Omega)+\delta$
\[
\mathrm{cap}_p(\overline{B_{r}(x_0)}\setminus\Omega;B_{2r}(x_0))\ge \varepsilon_\delta,\qquad \text{for every}\ x_0\in\mathbb{R}^N,
\]
In particular, for every radius $r$ such that 
\[
R^{\rm G}_p(\Omega)+\delta<r\le R_{p,1/2}(\Omega),
\] 
we also have
\[
\varepsilon_\delta\,\frac{\mathrm{cap}_p(\overline{B_r(x_0)};B_{2r}(x_0))}{\Big(R_{p,1/2}(\Omega)\Big)^{N-p}\,\mathrm{cap}_p(\overline{B_1};B_2)}\le \mathrm{cap}_p(\overline{B_r(x_0)}\setminus \Omega;B_{2r}(x_0)),
\]
thanks to the scaling properties of the relative $p-$capacity and the restriction $r\le R_{p,1/2}(\Omega)$.
In particular, if we define
\[
\gamma_\delta=\min\left\{\frac{1}{4},\ \frac{\varepsilon_\delta}{2}\, \frac{1}{\Big(R_{p,1/2}(\Omega)\Big)^{N-p}\,\mathrm{cap}_p(\overline{B_1};B_2)}\right\},
\]
from the previous estimate we get
\[
\gamma_\delta\,\mathrm{cap}_p(\overline{B_r(x_0)};B_{2r}(x_0))<\mathrm{cap}_p(\overline{B_r(x_0)}\setminus \Omega;B_{2r}(x_0)).
\]
This holds for every $R^{\rm G}_p(\Omega)+\delta<r\le R_{p,1/2}(\Omega)$ and every $x_0\in\mathbb{R}^N$. Thus, we have two possibilities: 
\[
\text{either}\ R_{p,\gamma_\delta}(\Omega)\le R^{\rm G}_p(\Omega)+\delta\qquad \text{or}\qquad R_{p,\gamma_\delta}(\Omega)> R_{p,1/2}(\Omega).
\]
Since $\gamma_\delta\le 1/4$ by construction and $\gamma\mapsto R_{p,\gamma}(\Omega)$ is non-decreasing, the second alternative can not occur. Thus, we finally get \eqref{santita}.
\end{proof}
We conclude this section by noticing that while we have (see \cite[Theorem 1.3]{Ga2})
\[
\lambda_p(\Omega)\le \lambda_p(B_1)\,\left(\frac{1}{R^{\rm G}_p(\Omega)}\right)^p,
\]
it is {\it not} possible to have a lower bound of the type
 	\begin{equation}
	\label{Gnot}
	C_{N,p}\, \left(\frac{1}{R^{\rm G}_p(\Omega)}\right)^p \le \lambda_p(\Omega), \quad \text{for every open set}\ \Omega \subseteq \mathbb{R}^N.
	\end{equation}
	This is the content of the next example.
\begin{example}
\label{exa:0fesso}
Let $0<\delta<1/4$ be fixed, we introduce the periodically perforated set
	\[
	\Omega_\delta= \mathbb{R}^N \setminus \bigcup_{\mathbf{i}\in\mathbb{Z}^N} \overline{B_\delta(\mathbf{i})},
	\] 
	as in \cite[Example A.1]{BozBra2}.
	Take $r > \sqrt{N}/2+ 1/4$. Let $x_0 \in \mathbb{R}^N$ and let ${\bf i}_0\in \mathbb{Z}^N$ be such that 
	\[
	|x_0 - {\bf i}_0| = {\rm dist}\left(x_0, \mathbb{R}^N \setminus \mathbb{Z}^N\right). 
	\]
	For every $y \in B_{\delta}({\bf i}_0)$, by using the triangle inequality, we have 
	\[
	|y - x_0| \le |y - {\bf i}_0| + |{\bf i}_0 - x_0| < \delta + \frac{\sqrt{N}}{2} < \frac{1}{4} + \frac{\sqrt{N}}{2} < r,
	\]
	that is 
	\[
	B_{\delta}({\bf i}_0) \subseteq B_r(x_0).
	\]
	This entails that 
	\begin{equation} \label{inclusion}
		|\overline{B_r(x_0)} \setminus \Omega_\delta| \ge |B_\delta(x_0)| = \omega_N\,\delta^N, \qquad \mbox{ for every } x_0 \in \mathbb{R}^N. 
	\end{equation}
	Then, by using Lemma \ref{lm:ammazya!} in combination with Lemma \ref{lemma:2}, we get
	\[
		\begin{split}
			\mathscr{C}_p(\overline{B_{r}(x_0)} \setminus \Omega_\delta) &\ge 2^{1-p}\,\frac{r^p}{r^p+1}\, \mathrm{cap}_p\left(\overline{B_r(x_0)} \setminus \Omega_\delta; B_{2r}(x_0)\right) \\
			&\ge2^{1-p}\,\frac{r^p}{r^p+1}\,\lambda_p(B_{2r}(x_0))\,|\overline{B_r(x_0)} \setminus \Omega_\delta|.\\
		\end{split}
\]
By using that $\lambda_p(B_{2r}(x_0))=(2\,r)^{-p}\,\lambda_p(B_1)$ and \eqref{inclusion}, we thus get
\[
\mathscr{C}_p(\overline{B_{r}(x_0)} \setminus \Omega_\delta) \ge \varepsilon_0,\qquad \text{where}\ \varepsilon_0=\frac{2}{4^p}\, \frac{\omega_N}{r^p+1}\,\lambda_p(B_1)\,\delta^N.
\] 
By arbitrariness of $x_0\in\mathbb{R}^N$, this shows that we must have $r > R^{\rm G}_p(\Omega_\delta)$. In turn, by the arbitrariness of $r$ we also have
	\[
	R^{\rm G}_p(\Omega_\delta) \le \frac{\sqrt{N}}{2} + \frac{1}{4}, \qquad \text{for every}\ 0 < \delta < \frac{1}{4}. 
	\]
	On the other hand, by using the same computations of \cite[Example A.1]{BozBra2} we have that 
	\[
	\lim_{\delta \searrow 0} \lambda_p(\Omega_{\delta}) = 0. 
	\]
	Thus, an estimate like \eqref{Gnot} can not be true.
\end{example}

\section{Capacitary inradius and measure density condition}	
\label{sec:5}

\subsection{A two-sided estimate}
In this section, we want to compare our capacitary inradius with the classical inradius, under a simple measure density condition on the open sets. Namely, we will consider open sets $\Omega\subseteq\mathbb{R}^N$ such that the following {\it measure density index}
\[
\theta^*_{\Omega,r_0}(t) := \inf\left\{\left(\frac{r_0}{r}\right)^t\,\dfrac{|B_r(x)\setminus \Omega|}{|B_r(x)|}\, : x\in \partial \Omega,\ 0 < r \le r_0\right\},
\]
is positive, for some $r_0>0$ and $t\ge 0$. We notice that by definition we have
\[
\theta^*_{\Omega,r_0}(t)\le \dfrac{|B_{r_0}(x)\setminus \Omega|}{|B_{r_0}(x)|}\le 1.
\]
\par
The next simple result gives a quantitative equivalence of this index with a slightly different one, which will be more practical.
\begin{proposition}
\label{prop:index}
	Let $\Omega \subsetneq \mathbb{R}^N$ be an open set and let $t\ge 0$, $r_0 > 0$. We set
	\begin{equation}
	\label{altind}
	\theta_{\Omega,r_0}(t) = \inf\left\{\left(\frac{r_0}{r}\right)^t\,\dfrac{|B_r(x)\setminus \Omega|}{|B_r(x)|}\, : x\in \mathbb{R}^N\setminus\Omega,\ 0 < r \le r_0\right\}.
	\end{equation}
Then we have 
	\begin{equation*}
		\theta_{\Omega, r_0}(t) \le \theta^*_{\Omega,r_0}(t) \le 2^{N+t}\,\theta_{\Omega, r_0}(t).
	\end{equation*}
\end{proposition}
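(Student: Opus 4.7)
The first inequality $\theta_{\Omega, r_0}(t) \leq \theta^*_{\Omega, r_0}(t)$ is immediate: since $\Omega$ is open, $\partial \Omega \subseteq \mathbb{R}^N \setminus \Omega$, so $\theta^*$ is the infimum of the same quantity over a subset of the admissible pairs for $\theta$, and the inequality follows.

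For the nontrivial bound $\theta^*_{\Omega, r_0}(t) \leq 2^{N+t}\, \theta_{\Omega, r_0}(t)$, the plan is to fix an arbitrary admissible pair $(x, r) \in (\mathbb{R}^N \setminus \Omega) \times (0, r_0]$ and produce a pair $(y, s) \in \partial \Omega \times (0, r_0]$ satisfying
\[
\left(\frac{r_0}{s}\right)^t \frac{|B_s(y) \setminus \Omega|}{|B_s(y)|} \leq 2^{N+t} \left(\frac{r_0}{r}\right)^t \frac{|B_r(x) \setminus \Omega|}{|B_r(x)|};
\]
taking the infimum in $(x, r)$ then yields the claim. Since $\Omega \subsetneq \mathbb{R}^N$, $\partial \Omega$ is nonempty and closed, so a nearest point of $\partial \Omega$ to $x$ exists; I set $d := \mathrm{dist}(x, \partial \Omega)$. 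The natural dichotomy is whether $d \geq r/2$ or $d < r/2$.

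If $d \geq r/2$, then $B_{r/2}(x) \cap \partial \Omega = \emptyset$, hence by connectedness $B_{r/2}(x) \subseteq \mathbb{R}^N \setminus \Omega$, so that $|B_r(x) \setminus \Omega|/|B_r(x)| \geq 2^{-N}$. Testing the definition of $\theta^*$ with any $y_0 \in \partial \Omega$ and $s = r_0$ gives $\theta^*_{\Omega, r_0}(t) \leq 1$, and since $(r_0/r)^t \geq 1$ for $t \geq 0$, the desired bound follows with room to spare. If instead $d < r/2$, I choose $y \in \partial \Omega$ with $|y - x| = d$ and set $s := r/2 \leq r_0$; the triangle inequality then yields the key inclusion $B_s(y) \subseteq B_r(x)$, whence $|B_s(y) \setminus \Omega| \leq |B_r(x) \setminus \Omega|$. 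Combining this with $|B_r(x)|/|B_s(y)| = 2^N$ and $(r_0/s)^t = 2^t (r_0/r)^t$, a direct rearrangement produces exactly the factor $2^{N+t}$.

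The only delicate point is the choice of threshold $r/2$: it is precisely what is needed to ensure the inclusion $B_{r/2}(y) \subseteq B_r(x)$ in the small-$d$ case (one needs $|y - x| < r/2$), while in the large-$d$ case it leaves a density fraction $2^{-N}$ in $B_r(x)\setminus\Omega$, just enough for the trivial bound $\theta^* \leq 1$ to absorb the loss once multiplied by $2^{N+t}(r_0/r)^t$.
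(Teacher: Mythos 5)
Your proof is correct and follows essentially the same route as the paper: a dichotomy on $\mathrm{dist}(x,\partial\Omega)$ versus a threshold of order $r$, with the inclusion of a boundary-centered ball $B_{r/2}(y)\subseteq B_r(x)$ in the near case and the full-density bound combined with $\theta^*_{\Omega,r_0}(t)\le 1$ in the far case. The paper merely keeps the threshold as $(1-\varepsilon)\,r$ with a general $\varepsilon\in(0,1)$ (and uses the radius $\delta=r-|x-y|$ rather than exactly $r/2$) before optimizing to $\varepsilon=1/2$ at the end, which yields the same constant $2^{N+t}$.
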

\begin{proof}
This is essentially a particular case of \cite[Theorem 3.1]{GL}. We reproduce the proof to keep track of the relevant constant.
\par
	The leftmost inequality follows from the fact that $\partial \Omega \subseteq \mathbb{R}^N \setminus \Omega$. In order to prove the rightmost inequality, we can assume that 
	\begin{equation} \label{hp:meas-density-boundary}
		\theta^*_{\Omega,r_0} > 0,
	\end{equation}
	otherwise there is nothing prove.
 We fix a point $x \in \mathbb{R}^N \setminus \Omega$ and a radius $0 < r \le r_0$ be given. We set 
 \[
 d_\Omega(x)=\min_{y\in\partial\Omega} |x-y|,
 \]
 and distinguish two cases, for $0<\varepsilon<1$.
	\vskip.2cm\noindent
	{\it Case $d_{\Omega}(x) \ge(1-\varepsilon)\,r$.} By observing that $B_{(1-\varepsilon)\,r}(x) \subseteq \mathbb{R}^N \setminus \Omega$,
 	we have 
	\begin{equation} \label{ineq:case1-theta}
	\begin{split}
		|B_{r}(x) \setminus \Omega| \ge |B_{(1-\varepsilon)\,r}(x) \setminus \Omega| = |B_{(1-\varepsilon)\,r}(x)| &= \omega_N\,(1-\varepsilon)^N\,r^N\\
		  \ge \theta^*_{\Omega,r_0}\,\left(\frac{r}{r_0}\right)^t\, \omega_N\,(1-\varepsilon)^N\,r^N,
		\end{split}
	\end{equation}
	where in the last inequality we used the trivial fact that $\theta^*_{\Omega,r_0} \le 1$ and $r\le r_0$.
	\vskip.2cm\noindent 
	{\it Case $d_{\Omega}(x)< (1-\varepsilon)\,r$.} In this case, it must result $B_{(1-\varepsilon)\,r}(x) \cap \partial \Omega \neq \emptyset$.
Given $y \in B_{(1-\varepsilon)\,r}(x) \cap \partial \Omega$, we set $\delta := r - |x-y|.$ In particular, by construction we have 
	\[
	\varepsilon\, r < \delta \le r  \qquad \mbox{ and } \qquad B_\delta(y) \subseteq B_{r}(x).
	\]
	By using \eqref{hp:meas-density-boundary}, we then infer that 
	\begin{equation} \label{ineq:case2-theta}
		\begin{split}
			|B_{r}(x) \setminus \Omega| \ge |B_\delta(y) \setminus \Omega| \ge \theta^*_{\Omega,r_0}\,\left(\frac{\delta}{r_0}\right)^t \omega_N\,\delta^N > \theta^*_{\Omega,r_0}\,\left(\frac{r}{r_0}\right)^t \omega_N\,\varepsilon^{N+t}\,r^N . 
		\end{split}
	\end{equation}
	If we now combine \eqref{ineq:case1-theta} and \eqref{ineq:case2-theta}, we eventually obtain 
	\[
	\left(\frac{r_0}{r}\right)^t\,\dfrac{|B_{r}(x) \setminus \Omega|}{|B_{r}(x)|} \ge \min\left\{(1-\varepsilon)^N, \varepsilon^{N+t}\right\}\,\theta^*_{\Omega,r_0}. 
	\]
	By arbitrariness of both $x \in \mathbb{R}^N \setminus \Omega$ and $0<r\le r_0$, 
	we get 
\[
\theta_{\Omega, r_0}\ge \min\left\{(1-\varepsilon)^N, \varepsilon^{N+t}\right\}\,\theta^*_{\Omega,r_0}.
\]
The choice $\varepsilon=1/2$ leads to the desired conclusion.
\end{proof}
We now come to the main result of this paper.
\begin{theorem} 
\label{prop:meas-density-ball}
	Let $1 \le p \le N$ and let $\Omega \subsetneq \mathbb{R}^N$ be an open set such that $\theta^*_{\Omega, r_0}(t)>0$,
	for some $r_0 > 0$ and $t\ge 0$. Then there exists an explicit parameter $0 < \gamma_0 = \gamma_0(N,p,\theta^*_{\Omega, r_0}(t),t,r_0/r_\Omega) < 1$ such that if we set
\[
\mathcal{C}=\left\{\begin{array}{rl}
6\,\sqrt{N},& \text{if}\ 0<\gamma<\gamma_0,\\ 
&\\
6\,\sqrt{N}\,\left(\dfrac{2\,C_{N,p,\gamma}}{\gamma_0\,\sigma_{N,p}}\right)^\frac{1}{p},& \text{if}\ \gamma_0\le \gamma<1,
\end{array}
\right.
\]
with $C_{N,p,\gamma}$ and $\sigma_{N,p}$ as in \eqref{capin},	
then we have
	\begin{equation}
	\label{twosided}
		r_\Omega\le R_{p,\gamma}(\Omega) \le \mathcal{C}\,r_{\Omega}, \qquad \text{for every}\ 0<\gamma <1.
	\end{equation}
\end{theorem}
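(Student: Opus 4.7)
The lower bound $r_\Omega \le R_{p,\gamma}(\Omega)$ is immediate: for every $r < r_\Omega$ one picks $r'$ with $r < r' < r_\Omega$ and $B_{r'}(x_0) \subseteq \Omega$, so that $\overline{B_r(x_0)} \subseteq B_{r'}(x_0) \subseteq \Omega$; hence $\overline{B_r(x_0)} \setminus \Omega = \emptyset$ is $(p,\gamma)-$negligible for every $\gamma \in (0,1)$.

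For the upper bound, I would first treat a single small parameter $\gamma_0$ and then extend via the two-sided estimate \eqref{capin}. The first step is to show that there exists $\gamma_0 = \gamma_0(N,p,\theta^*_{\Omega,r_0}(t), t, r_0/r_\Omega) \in (0,1)$ such that $R_{p,\gamma_0}(\Omega) \le 6\sqrt{N}\,r_\Omega$. This reduces, via the definition \eqref{cap_inradius}, to proving that
\[
\mathrm{cap}_p\bigl(\overline{B_r(x_0)} \setminus \Omega;\,B_{2r}(x_0)\bigr) > \gamma_0\,\mathrm{cap}_p\bigl(\overline{B_r(x_0)};\,B_{2r}(x_0)\bigr)
\]
for every $r \ge 6\sqrt{N}\,r_\Omega$ and every $x_0 \in \mathbb{R}^N$; and this in turn reduces, via Lemma \ref{lemma:2}, the scaling $\lambda_p(B_{2r}(x_0)) = \lambda_p(B_1)/(2r)^p$, and the explicit formulas \eqref{eqn:cap-ball0}--\eqref{eqn:cap-ballN}, to a volume estimate of the form $|\overline{B_r(x_0)} \setminus \Omega| \ge c_0\,r^N$, for an explicit constant $c_0$ sharing the dependencies of $\gamma_0$.

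To produce this volume bound I tile a slightly shrunk copy $Q_{s'}(x_0)$ of the inscribed cube $Q_{r/\sqrt{N}}(x_0) \subseteq B_r(x_0)$ by congruent sub-cubes of side $L = 3\,r_\Omega$. Since each such sub-cube contains an inscribed ball of radius $3r_\Omega/2 > r_\Omega$, the definition of the inradius produces in every sub-cube a point $z_i \in (\mathbb{R}^N \setminus \Omega) \cap B_{3r_\Omega/2}(c_i)$. Retaining a checkerboard subfamily (one sub-cube out of every two along each coordinate), the selected points satisfy $|z_i - z_j| \ge 3\,r_\Omega$, so that the balls $B_\rho(z_i)$ with $\rho := \min\{(3/2)\,r_\Omega,\, r_0\}$ are pairwise disjoint and, provided the shrinking factor $s'$ is tuned correctly, all contained in $B_r(x_0)$. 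Applying the measure density condition at each $z_i$ in the equivalent form $\theta_{\Omega,r_0}(t)$ of Proposition \ref{prop:index}, and summing the $\gtrsim (r/r_\Omega)^N$ resulting contributions, yields the claimed volume bound. The constant $c_0$ degrades by a factor $(r_\Omega/r_0)^t$ in the regime $r_0 \ge (3/2)\,r_\Omega$ and by $(r_0/r_\Omega)^N$ in the opposite regime, which is exactly how the ratio $r_0/r_\Omega$ enters $\gamma_0$. Monotonicity of $\gamma \mapsto R_{p,\gamma}(\Omega)$ then propagates the bound to every $\gamma \in (0,\gamma_0]$.

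For the remaining range $\gamma_0 \le \gamma < 1$, chaining the lower bound in \eqref{capin} at level $\gamma_0$ with its upper bound at level $\gamma$ gives
\[
R_{p,\gamma}(\Omega) \le \left(\frac{C_{N,p,\gamma}}{\sigma_{N,p}\,\gamma_0}\right)^{\!1/p}\!R_{p,\gamma_0}(\Omega) \le 6\sqrt{N}\left(\frac{C_{N,p,\gamma}}{\sigma_{N,p}\,\gamma_0}\right)^{\!1/p}\!r_\Omega,
\]
which matches the stated $\mathcal{C}$; the extra factor $2^{1/p}$ is a harmless slack obtained by shrinking $\gamma_0$ slightly, which is affordable because the inequalities in the first step are strict. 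The main obstacle is precisely the coordinated choice of the sub-cube side $L$, the subgrid spacing, and the radius $\rho$: they must simultaneously make the balls $B_\rho(z_i)$ disjoint, keep them inside $B_r(x_0)$, and satisfy $\rho \le r_0$ so that the measure density applies. Handling the two regimes $r_0 \ge r_\Omega$ and $r_0 < r_\Omega$ uniformly is what forces the dependence of $\gamma_0$ on $r_0/r_\Omega$.
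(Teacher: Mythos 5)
Your proposal is correct and follows essentially the same route as the paper: tile an inscribed cube of $B_r(x_0)$ by sub-cubes of side comparable to $r_\Omega$, extract from each a point of $\mathbb{R}^N\setminus\Omega$ via the definition of the inradius, apply the measure density condition in the amended form $\theta_{\Omega,r_0}(t)$ of Proposition \ref{prop:index} to disjoint balls of radius $\sim\min\{r_\Omega,r_0\}$, convert the resulting volume bound $|\overline{B_r(x_0)}\setminus\Omega|\gtrsim r^N$ into a capacity lower bound via Lemma \ref{lemma:2}, and then cover the range $\gamma\ge\gamma_0$ by chaining the two sides of \eqref{capin}. The only cosmetic difference is that you enforce disjointness of the small balls by a checkerboard selection of sub-cubes, whereas the paper calibrates the side length $\ell=r_\Omega(2+\min\{1,r_0/r_\Omega\})$ so that each ball $B_{\ell-2r_\Omega}(x_i)$ sits inside its own sub-cube; both yield the same constants up to dimensional factors.
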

\begin{proof}
The leftmost inequality in \eqref{twosided} follows from the definition of capacitary inradius, we focus on the other estimate.
Without loss of generality we can assume that $r_{\Omega} < +\infty$.  We divide the proof in two parts: we first construct $\gamma_0$ and show that the estimate is true in the range $0<\gamma<\gamma_0$; then we show how this is sufficient to get the claimed estimate for $\gamma\ge \gamma_0$, as well.
\vskip.2cm\noindent
{\it Part 1: case $\gamma<\gamma_0$}.
We set 
	\begin{equation} \label{defi:parameters}
		\alpha := \frac{r_0}{r_\Omega} \qquad \mbox{ and } \qquad A := 2\,\sqrt{N}\,\left(2 + \min\{1, \alpha\}\right),
	\end{equation}
	in particular $A \le 6\,\sqrt{N}$.
	We take an open ball $B_r(x_0)$ having radius $r > A\, r_{\Omega}$. 
	We set\footnote{For every $\delta\in\mathbb{R}$, we denote by $\big\lfloor\delta\big\rfloor$ its {\it integer part}, defined by 
\[
\big\lfloor\delta\big\rfloor=\max\Big\{n\in\mathbb{Z}\, :\, \delta\ge n\Big\}.
\]
} 
	\begin{equation} \label{defi:erre-segnato}
	\ell=r_\Omega\,(2+\min\{1,\alpha\})\qquad \text{and}\qquad m=\left\lfloor\frac{r/\sqrt{N}}{\ell}\right\rfloor,
	\end{equation}
then 
\[
Q_{m\,\ell}(x_0)\subseteq Q_{r/\sqrt{N}}(x_0) \subseteq B_r(x_0).
\]
Moreover, the cube $Q_{m\,\ell}(x_0)$ can be tiled (up to a Lebesgue negligible set) with $m^N$ disjoint cubes having side length $2\,\ell$, let us call $\{Q_\ell(\mathbf{p}_i)\}_{i=1}^{m^N}$ these cubes.	By construction, the cube $Q_\ell(\mathbf{p}_i)$ contains the ball $B_{2\,r_\Omega}(\mathbf{p}_i)$. Thanks to the definition of inradius, there exists in particular a point $x_i\in B_{2\,r_\Omega}(\mathbf{p}_i)\setminus\Omega$. By recalling the definition of $\ell$, we have that 
\[
B_{\ell-2\,r_\Omega}(x_i)\subseteq Q_\ell(\mathbf{p}_i).
\]
\begin{figure}
\includegraphics[scale=.3]{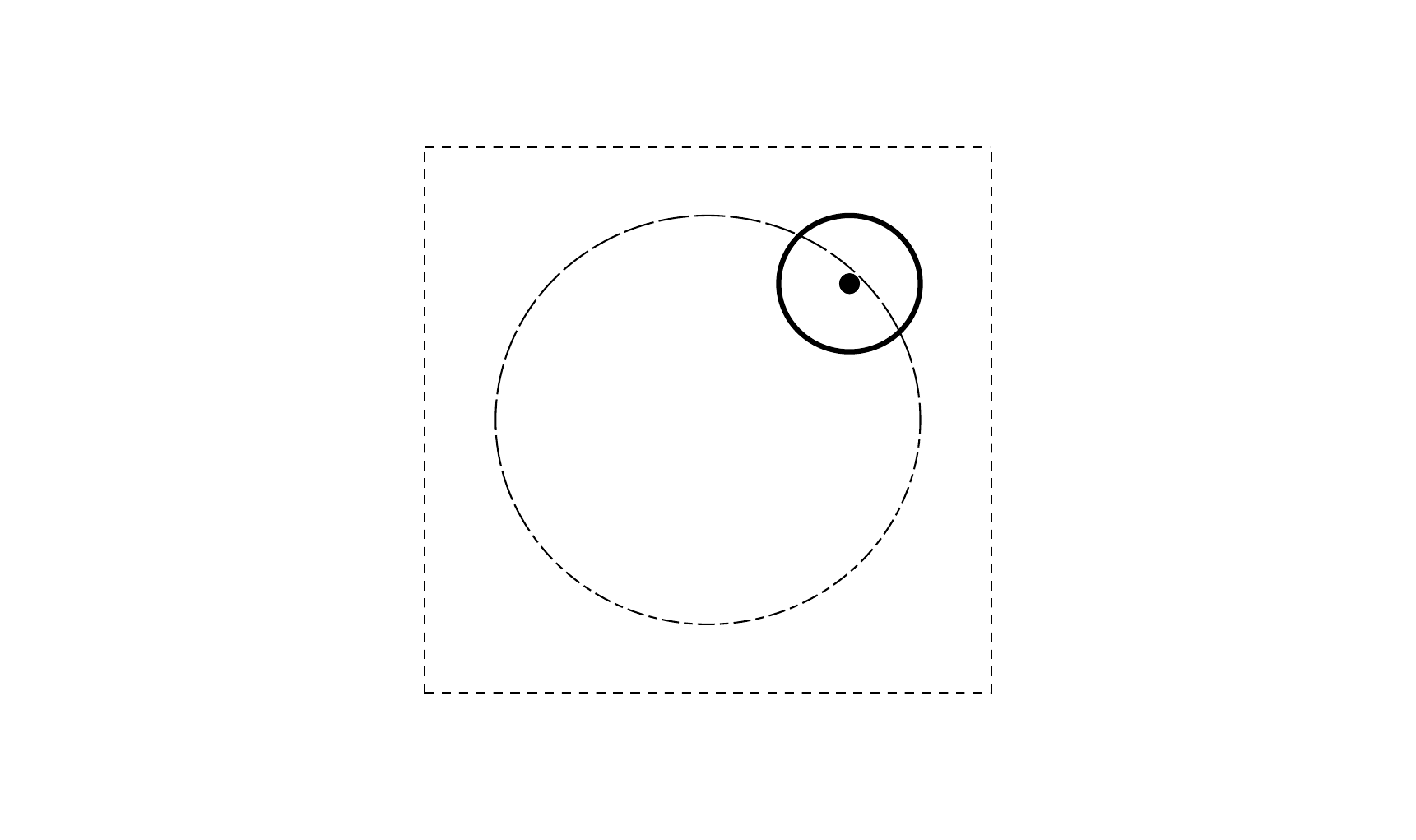}
\caption{A cube $Q_\ell(\mathbf{p}_i)$, containing a concentric ball with radius $2\,r_\Omega$. The black dot indicates a point outside $\Omega$. By construction, at this point we can center a ball which is feasible for the measure density index $\theta_{\Omega,r_0}(t)$.}
\label{fig:1}
\end{figure}
Observe that $\ell-2\,r_\Omega=r_\Omega\,\min\{1,\alpha\}\le r_0$. Thus, in particular we have that each ball $B_{\ell-2\,r_\Omega}(x_i)$ is feasible for the definition of the amended measure density index $\theta_{\Omega,r_0}(t)$ defined by \eqref{altind} (see Figure \ref{fig:1}). Accordingly, we get
\[
\begin{split}
|B_{\ell-2\,r_\Omega}(x_i)\setminus\Omega|&\ge \theta_{\Omega,r_0}(t)\,\omega_N\,(\ell-2\,r_\Omega)^N\,\left(\frac{\ell-2\,r_\Omega}{r_0}\right)^t\\
&=\theta_{\Omega,r_0}(t)\,\omega_N\,\min\{1,\alpha^{N+t}\}\,r_\Omega^{N}\,\left(\frac{r_\Omega}{r_0}\right)^t\\
&=\theta_{\Omega,r_0}(t)\,\omega_N\,\min\{\alpha^{-t},\alpha^{N}\}\,r_\Omega^{N},
\end{split}
\]
By keeping into account all the contributions of these balls, we can thus estimate
\begin{equation}
\label{palline}
\begin{split}
|B_r(x_0)\setminus \Omega|\ge \sum_{i=1}^{m^N} |Q_\ell(\mathbf{p}_i)\setminus\Omega|&\ge \sum_{i=1}^{m^N}|B_{\ell-2\,r_\Omega}(x_i)\setminus\Omega|\\
&\ge m^N\,\theta_{\Omega,r_0}(t)\,\omega_N\,\min\{\alpha^{-t},\alpha^{N}\}\,r_\Omega^{N}.
\end{split}
\end{equation}
We claim that the latter can be further estimated from below by a term of the type $c\,r^N$. Indeed,  by recalling that we are assuming $r>A\,r_\Omega$ and using both \eqref{defi:parameters} and \eqref{defi:erre-segnato}, the number of cubes $m$ is such that
\[
	m=	\left\lfloor \frac{r/\sqrt{N}}{\ell} \right\rfloor \ge \frac{r/\sqrt{N}}{\ell} - 1 > \frac{r/\sqrt{N}}{\ell} - \frac{\sqrt{N}}{A}\frac{r/\sqrt{N}}{r_{\Omega}} = \frac{r}{r_\Omega}\,\frac{1}{2\,\sqrt{N}}\frac{1}{2+\min\{1, \alpha\}} .
\]
We spend this information in \eqref{palline}, so to obtain
\[
|B_r(x_0)\setminus \Omega|\ge \left(\frac{1}{2\,\sqrt{N}}\,\frac{\min\{\alpha^{-t/N},\alpha\}}{2+\min\{1, \alpha\}}\right)^N\,\theta_{\Omega,r_0}(t)\,\omega_N\,r^N.
\]
By further using Proposition \ref{prop:index}, we can replace $\theta_{\Omega,r_0}(t)$ with $\theta_{\Omega,r_0}^*(t)$, up to modify the constant. Namely, we can infer	
\begin{equation} \label{eqn:fat-vol}
		|\overline{B_r(x_0)} \setminus \Omega| 
			\ge \left(\frac{1}{4\cdot 2^{t/N}\,\sqrt{N}}\,\frac{\min\{\alpha^{-t/N},\alpha\}}{2+\min\{1, \alpha\}}\right)^N\,\theta^*_{\Omega, r_0}(t)\,\omega_N\,r^N.
	\end{equation}
On the other hand, by using Lemma \ref{lemma:2} with $K = \overline{B_r(x_0)} \setminus\Omega$ and $E=B_{2r}(x_0)$, the measure of $\overline{B_r(x_0)} \setminus \Omega$ bounds from below its relative $p-$capacity. More precisely, we have for $p<N$
	\[
	\mathrm{cap}_p\left(\overline{B_r(x_0)} \setminus \Omega; B_{2r}(x_0)\right)\ge \frac{\lambda_p(B_2)}{r^p}\,|\overline{B_r(x_0)} \setminus \Omega|.
	\]
	On account of the lower bound \eqref{eqn:fat-vol}, this yields
\[
\begin{split}
\mathrm{cap}_p&\left(\overline{B_r(x_0)} \setminus \Omega; B_{2r}(x_0)\right)\\
&\ge \lambda_p(B_2)\,|B_1|\,\left(\frac{1}{4\cdot 2^{t/N}\,\sqrt{N}}\,\frac{\min\{\alpha^{-t/N}, \alpha\}}{2+\min\{1, \alpha\}}\right)^{N}\,\theta^*_{\Omega, r_0}(t)\,r^{N-p}.
\end{split}
\]		
On the other hand, if $B_r(x_0)$ would be $(p,\gamma)-$negliglible, we would have
\[
			\gamma\,r^{N-p}\,\mathrm{cap}_p\left(\overline{B}_1; B_2\right)\ge \mathrm{cap}_p\left(\overline{B_r(x_0)} \setminus \Omega; B_{2r}(x_0)\right).
\]
	By joining the last two estimates and canceling out the term $r^{N-p}$ on both sides, we would eventually obtain that it must result\footnote{Observe that $\gamma_0<1$, since by Lemma \ref{lemma:2} with $E=B_2$ and $\Sigma=\overline{B_1}$ we have
	\[
	\frac{\lambda_p(B_2)\,|B_1|}{\mathrm{cap}_p\left(\overline{B}_1; B_2\right)}\le 1.
	\]}
		\begin{equation}
		\label{gamma0}
	\gamma\ge 	\frac{\lambda_p(B_2)\,|B_1|}{\mathrm{cap}_p\left(\overline{B}_1; B_2\right)}\,\left(\frac{1}{4\cdot 2^{t/N}\,\sqrt{N}}\,\frac{\min\{\alpha^{-t/N}, \alpha\}}{2+\min\{1, \alpha\}}\right)^N\,\theta^*_{\Omega, r_0}(t)=:\gamma_0.
	\end{equation}
		This shows that every ball with radius $r > A\,r_{\Omega}$ can not be $(p,\gamma)-$negligible, for $\gamma < \gamma_0$. On account of the definition \eqref{cap_inradius} of capacitary inradius, we obtain 
	\[
	R_{p,\gamma}(\Omega) \le A\,r_{\Omega},\qquad \text{for every}\ 0<\gamma < \gamma_0.
	\]
As already observed, we have in particular $A \le 6\,\sqrt{N}$ and thus our claim follows.
\vskip.2cm\noindent
{\it Part 2: case $\gamma\ge \gamma_0$}. We can exploit the two-sided estimate \eqref{capin}. Indeed, by using it with $\gamma\ge \gamma_0$ on the right-hand side and with $\gamma=\gamma_0/2$ on the left-hand side, we get in particular
\[
\sigma_{N,p}\,\frac{\gamma_0}{2}\,\left(\frac{1}{R_{p,\gamma_0/2}(\Omega)}\right)^p\le \lambda_p(\Omega)\le C_{N,p,\gamma}\,\left(\frac{1}{R_{p,\gamma}(\Omega)}\right)^p.
\]
Thus, we also obtain
\[
R_{p,\gamma}(\Omega)\le \left(\frac{2\,C_{N,p,\gamma}}{\sigma_{N,p}\,\gamma_0}\right)^\frac{1}{p}\, R_{p,\gamma_0/2}(\Omega).
\]
By using the result of the first part, we now conclude.
\end{proof} 
\begin{remark}[The case $p>N$]
This case is simpler and one can prove that $R_{p,\gamma}(\Omega)$ and $r_\Omega$ are equivalent {\it for every open set}. This is clearly due to the fact that for $p>N$ the only set with zero $p-$capacity is the empty set. We refer to \cite[Section 7]{BozBra2} for the discussion of this case.
\end{remark}

\begin{remark}
From the explicit expression \eqref{gamma0} of the parameter $\gamma_0$, we see that we have
\[
\gamma_0\to 0\qquad \text{if}\quad \theta^*_{\Omega,r_0}(t)\to 0,
\]
and also
\[
\gamma_0\to 0\qquad \text{if either}\quad \alpha=\frac{r_0}{r_\Omega}\to +\infty\quad \text{or}\quad \alpha=\frac{r_0}{r_\Omega}\to 0.
\]
The case $t=0$ is special: in this case it is easily seen from \eqref{gamma0} that $\alpha^{-t/N}=1$ and thus $\gamma_0$ stays bounded away from $0$, even if the ratio $r_0/r_\Omega$ diverges. We point out that for $t=0$ the measure density condition becomes
\[
\theta^*_{\Omega,r_0}(0) := \inf\left\{\dfrac{|B_r(x)\setminus \Omega|}{|B_r(x)|}\, : x\in \partial \Omega,\ 0 < r \le r_0\right\}>0,
\]
which sounds like a uniform cone condition (but it is actually weaker than this, see \cite[Theorem 4.1]{GL}).
\end{remark}
On the other hand, from the previous theorem we have that 
\[
r_\Omega \le \lim_{\gamma\searrow 0} R_{p,\gamma}(\Omega)\le 6\,\sqrt{N}\,r_\Omega,
\]
for every open set with a positive measure density index $\theta^*_{\Omega,r_0}(t)$. In particular, for the Gallagher capacitary inradius, from Proposition \ref{prop:GvsUs}  we get the following
\begin{corollary}
	Let $1 \le p \le N$ and let $\Omega \subsetneq \mathbb{R}^N$ be an open set such that $\theta^*_{\Omega,r_0}(t)>0$,
	for some $r_0 > 0$ and $t\ge 0$. Then
	\[
	R^{\rm G}_p(\Omega)\le 6\,\sqrt{N}\,r_\Omega.
	\]
\end{corollary}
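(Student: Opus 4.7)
The proof should be a short, essentially bookkeeping argument that combines the two main ingredients already established in the paper. The plan is to invoke Proposition \ref{prop:GvsUs}, which identifies $R^{\rm G}_p(\Omega)$ with the monotone limit
\[
R^{\rm G}_p(\Omega) = \lim_{\gamma\searrow 0} R_{p,\gamma}(\Omega) = \inf_{0<\gamma<1} R_{p,\gamma}(\Omega),
\]
and then to apply the first part of Theorem \ref{prop:meas-density-ball}, which under the hypothesis $\theta^*_{\Omega,r_0}(t)>0$ produces an explicit threshold $\gamma_0=\gamma_0(N,p,\theta^*_{\Omega,r_0}(t),t,r_0/r_\Omega)\in(0,1)$ such that
\[
R_{p,\gamma}(\Omega)\le 6\,\sqrt{N}\,r_\Omega,\qquad \text{for every}\ 0<\gamma<\gamma_0.
\]

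With these two facts in hand, I would simply restrict to the range $0<\gamma<\gamma_0$ and pass to the infimum: since the upper bound $6\sqrt{N}\,r_\Omega$ is independent of $\gamma$, it survives the passage to the limit, and we get
\[
R^{\rm G}_p(\Omega)=\inf_{0<\gamma<1} R_{p,\gamma}(\Omega)\le \inf_{0<\gamma<\gamma_0} R_{p,\gamma}(\Omega)\le 6\,\sqrt{N}\,r_\Omega.
\]
This is exactly the claim. Note that we may assume $r_\Omega<+\infty$ (otherwise the inequality is trivial), which is precisely the setting in which $\gamma_0$ is well-defined in Theorem \ref{prop:meas-density-ball}.

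There is no real obstacle here: the only subtlety worth flagging is that the constant $\gamma_0$ depends on $\Omega$ (through $\theta^*_{\Omega,r_0}(t)$ and $r_0/r_\Omega$), but this is harmless because the estimate $R_{p,\gamma}(\Omega)\le 6\sqrt{N}\,r_\Omega$ holds uniformly for all $\gamma$ below this threshold, which is all that is needed to take the $\gamma\searrow 0$ limit. It is worth remarking that one obtains the constant $6\sqrt{N}$ (and not the worse constant appearing in the $\gamma\ge\gamma_0$ regime of Theorem \ref{prop:meas-density-ball}) precisely because the Gallagher inradius samples only the small-$\gamma$ behaviour of $R_{p,\gamma}$.
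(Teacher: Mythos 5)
Your proposal is correct and coincides with the paper's own argument: the authors likewise observe that Theorem \ref{prop:meas-density-ball} gives $R_{p,\gamma}(\Omega)\le 6\sqrt{N}\,r_\Omega$ for all $0<\gamma<\gamma_0$, pass to the limit $\gamma\searrow 0$, and identify that limit with $R^{\rm G}_p(\Omega)$ via Proposition \ref{prop:GvsUs}. Your remark that the $\gamma$-independence of the bound below the threshold $\gamma_0$ is what makes the limit passage work (and yields the clean constant $6\sqrt{N}$) is exactly the right observation.
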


\subsection{Some consequences}
We now highlight some interesting consequences which can be drawn from Theorem \ref{prop:meas-density-ball}. We start with the following two-sided estimate on the sharp Poincar\'e--Sobolev constants. 
\begin{corollary}
\label{coro:kroffo}
Let $1\le p\le N$ and let $q\ge p$ be such that
\[
	\begin{cases}
		q < p^{*}, \quad &\text{if}\ 1 \leq p < N,\\
		q < \infty, \quad &\text{if}\ p=N.\\
		\end{cases}
\]
Let $\Omega \subseteq \mathbb{R}^N$ be an open set such that $\theta^*_{\Omega,r_0}(t)>0$ for some $r_0>0$ and $t\ge 0$. Then we have
\[
\lambda_{p,q}(\Omega)>0\qquad\Longleftrightarrow \qquad r_\Omega<+\infty.
\]
Moreover, if we have
\[
t\le t_0,\qquad\theta^*_{\Omega,r_0}(t)\ge \theta>0\qquad \text{and}\qquad 0<\kappa_1\le \frac{r_0}{r_\Omega}\le \kappa_2,
\] 
then  there holds
	\begin{equation}
	\label{twosided_inradius}
\frac{c}{r_\Omega^{p-N+N\,\frac{p}{q}}}\le\lambda_{p,q}(\Omega)\le \frac{\lambda_{p,q}(B_1)}{r_\Omega^{p-N+N\,\frac{p}{q}}},
	\end{equation}
for some $c=c(N,p,q,t_0,\theta,\kappa_1,\kappa_2)>0$. Finally, if $t_0=0$ the previous constant can be taken independent of $\kappa_2$.
\end{corollary}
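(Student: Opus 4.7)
The plan is to glue Theorem \ref{prop:meas-density-ball} to the Poincar\'e-Sobolev version of \eqref{capin} (established in \cite[Theorem 6.1]{BozBra2}) and complement it with a standard trial-function bound coming from an inscribed ball. The main point is bookkeeping of explicit constants, rather than any new analytical idea.

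For the upper bound in \eqref{twosided_inradius} and the forward implication $\lambda_{p,q}(\Omega)>0 \Rightarrow r_\Omega<+\infty$ (in contrapositive form, $r_\Omega=+\infty \Rightarrow \lambda_{p,q}(\Omega)=0$), I would proceed as follows. For every $\varepsilon>0$ there is an inscribed ball $B_{r_\Omega-\varepsilon}(x_0)\subseteq\Omega$; the monotonicity of $\lambda_{p,q}$ under set inclusion together with the scaling identity $\lambda_{p,q}(B_r)=\lambda_{p,q}(B_1)/r^{p-N+Np/q}$ gives
\[
\lambda_{p,q}(\Omega)\le \frac{\lambda_{p,q}(B_1)}{(r_\Omega-\varepsilon)^{p-N+Np/q}}.
\]
Sending $\varepsilon\searrow 0$ yields the right-hand side of \eqref{twosided_inradius}; applying the same inequality with $r_\Omega$ replaced by an arbitrarily large $R$ covers the $r_\Omega=+\infty$ case.

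For the reverse implication and the lower bound, I would invoke the Poincar\'e-Sobolev extension of the rightmost inequality in \eqref{capin}, of the schematic form
\[
\lambda_{p,q}(\Omega)\ge \frac{\sigma_{N,p,q}\,\gamma}{R_{p,\gamma}(\Omega)^{p-N+Np/q}},\qquad 0<\gamma<1,
\]
from \cite[Theorem 6.1]{BozBra2}. Under the assumptions $r_\Omega<+\infty$ and $\theta^*_{\Omega,r_0}(t)>0$, Theorem \ref{prop:meas-density-ball} gives $R_{p,\gamma}(\Omega)\le \mathcal{C}\,r_\Omega<+\infty$, so $\lambda_{p,q}(\Omega)>0$, completing the equivalence. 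For the quantitative lower bound under the additional constraints $t\le t_0$, $\theta^*_{\Omega,r_0}(t)\ge\theta$ and $\kappa_1\le r_0/r_\Omega\le \kappa_2$, the key observation is that the expression \eqref{gamma0} for $\gamma_0$ can be bounded from below by an explicit quantity $\gamma_0^\sharp=\gamma_0^\sharp(N,p,t_0,\theta,\kappa_1,\kappa_2)>0$. Fixing any $\gamma\in(0,\gamma_0^\sharp)$ then pins down $\mathcal{C}$ in \eqref{twosided} to $6\sqrt{N}$ and makes the prefactor $\sigma_{N,p,q}\,\gamma$ depend only on the listed parameters and $q$, delivering the constant $c$.

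For the final assertion concerning $t_0=0$, the factor $\min\{\alpha^{-t/N},\alpha\}$ in \eqref{gamma0} collapses to $\min\{1,\alpha\}$, which is controlled from below by $\min\{1,\kappa_1\}$ alone. Consequently $\gamma_0^\sharp$, and therefore $c$, no longer needs the upper bound $\kappa_2$ on $r_0/r_\Omega$. The only real obstacle in this proof is keeping careful track of the constants produced by \eqref{gamma0} and by \cite[Theorem 6.1]{BozBra2}; there is no deeper difficulty to overcome.
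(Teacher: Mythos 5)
Your proposal is correct and follows essentially the same route as the paper: the upper bound comes from monotonicity and scaling of $\lambda_{p,q}$ on an inscribed ball, and the lower bound combines the Poincar\'e--Sobolev analogue of \eqref{capin} from \cite[Theorem 6.1]{BozBra2} with the bound $R_{p,\gamma}(\Omega)\le 6\sqrt{N}\,r_\Omega$ from Theorem \ref{prop:meas-density-ball} for a $\gamma$ taken below $\gamma_0$ (the paper uses $\gamma=\gamma_0/2$, you use a uniform lower bound $\gamma_0^\sharp$; this is the same bookkeeping). Your treatment of the $t_0=0$ case and of the equivalence $\lambda_{p,q}(\Omega)>0\Leftrightarrow r_\Omega<+\infty$ also matches the intended argument.
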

\begin{proof}
We directly prove the two-sided estimate \eqref{twosided_inradius}. The rightmost inequality trivially follows from the monotonicity of $\lambda_{p,q}$ with respect to the set inclusion and its scaling properties.
\par
For the leftmost one, we use that
\[
\gamma\,\sigma_{N, p, q} \left(\frac{1}{R_{p,\gamma}(\Omega)}\right)^{p-N+N\frac{p}{q}} \leq \lambda_{p,q}(\Omega),\qquad \text{for every}\ 0<\gamma<1,
\]
thanks to \cite[Main Theorem \& Theorem 6.1]{BozBra}.
We choose $\gamma=\gamma_0/2$, with $\gamma_0$ as in Theorem \ref{prop:meas-density-ball}, and use \eqref{twosided} to estimate $R_{p,\gamma_0/2}(\Omega)$ from above with $r_\Omega$. 
This gives 
\[
\frac{\gamma_0\,\sigma_{N, p, q} }{2}\,\left(\frac{1}{6\,\sqrt{N}}\right)^{p-N+N\frac{p}{q}}\,\frac{1}{{r_\Omega^{p-N+N\,\frac{p}{q}}}} \leq \lambda_{p,q}(\Omega).
\]
By recalling the explicit expression \eqref{gamma0} of $\gamma_0$, we get the desired estimate with a constant $c$ only depending on the claimed quantities.
\end{proof}
\begin{remark}
We remark that the previous result extends \cite[Proposition 2.1]{Sou}. Apart for admitting a general subcritical exponent $q\ge p$, we provide a quantitative lower bound on the relevant sharp Poincar\'e-Sobolev constant. Moreover, we notice that our measure density condition is much weaker than the cone condition required in \cite{Sou} (see Lemma \ref{lm:funnel} below).
\end{remark}
\begin{remark}
In the case $p>N$, the previous result holds for {\it every} open set and it has been obtained by Maz'ya, see \cite[Theorem 11.4.1]{Maz85}. For different proofs, see also
\cite[Theorem 1.3]{BozBra}, \cite[Corollary 5.9]{BraPriZag2}, \cite[Theorem 1.4.1]{Po1} and \cite[Theorem 1.1]{Vit}.
\end{remark}
In turn, from Corollary \ref{coro:kroffo} we can obtain the equivalence between different sharp Poincar\'e constants. For example, if for $p=2$ we use the following distinguished notation 
\[
\lambda(\Omega)=\inf_{\varphi\in C^\infty_0(\Omega)} \left\{\int_\Omega |\nabla \varphi|^2\,dx\, :\, \|\varphi\|_{L^2(\Omega)}=1\right\},
\]
and we recall 
the definition of Cheeger's constant
\[
h(\Omega)=\inf\left\{\frac{\mathcal{H}^{N-1}(\partial E)}{|E|}\, :\, E\Subset \Omega\ \text{open set with smooth boundary}\right\},
\]
we can get at first the Buser--type inequality announced in the Introduction.
\begin{corollary}[Buser--type inequality]
\label{coro:buser}
Let $\Omega \subseteq \mathbb{R}^N$ be an open set with $r_\Omega<+\infty$. Let us suppose that for some $r_0>0$ we have
\[
t\le t_0,\qquad \theta^*_{\Omega,r_0}(t)\ge \theta>0\qquad \text{and}\qquad 0<\kappa_1\le \frac{r_0}{r_\Omega}\le \kappa_2.	
\] 
Then we have
	\[
	\lambda(\Omega) \le C\,\Big(h(\Omega)\Big)^2,
	\]
for some $C=C(N,t_0,\theta,\kappa_1,\kappa_2)>0$. Finally, if $t_0=0$ the previous constant can be taken independent of $\kappa_2$.
\end{corollary}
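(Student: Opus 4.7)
The plan is to exploit the two-sided estimate \eqref{twosided_inradius} of Corollary \ref{coro:kroffo} twice, with two different choices of the exponents $(p,q)$, and to use the classical identification of the Cheeger constant with the sharp $L^1-$Poincar\'e constant to bridge the two inequalities.

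First, I would apply Corollary \ref{coro:kroffo} with $p=q=2$. Since the exponent in \eqref{twosided_inradius} reduces to $2-N+N=2$, the rightmost inequality gives directly
\[
\lambda(\Omega)\le \frac{\lambda(B_1)}{r_\Omega^2}.
\]
Next, I would invoke the same corollary with $p=q=1$; note that $1$ is always subcritical for $N\ge 2$. This time the exponent on $r_\Omega$ equals $1-N+N=1$, so the leftmost estimate in \eqref{twosided_inradius} reads
\[
\frac{c}{r_\Omega}\le \lambda_{1,1}(\Omega),
\]
for a constant $c=c(N,t_0,\theta,\kappa_1,\kappa_2)>0$ (independent of $\kappa_2$ if $t_0=0$, by the last sentence of Corollary \ref{coro:kroffo}).

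The conceptual bridge is the well-known identity $\lambda_{1,1}(\Omega)=h(\Omega)$, which follows from the coarea formula (Federer--Fleming). Indeed, smoothing the characteristic function of an admissible $E\Subset\Omega$ gives $\lambda_{1,1}(\Omega)\le \mathcal{H}^{N-1}(\partial E)/|E|$, while for $\varphi\in C^\infty_0(\Omega)$ with $\|\varphi\|_{L^1}=1$ the layer-cake and coarea representations
\[
\int_\Omega |\nabla\varphi|\,dx=\int_0^\infty \mathcal{H}^{N-1}(\{|\varphi|=t\})\,dt,\qquad 1=\int_0^\infty |\{|\varphi|>t\}|\,dt,
\]
combined with the definition of $h(\Omega)$ applied at a.e.\ level, yield the reverse estimate. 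Plugging $c/r_\Omega\le h(\Omega)$ into the upper bound for $\lambda(\Omega)$ then produces
\[
\lambda(\Omega)\le \frac{\lambda(B_1)}{r_\Omega^2}\le \frac{\lambda(B_1)}{c^2}\,\Big(h(\Omega)\Big)^2,
\]
which is the claim with $C=\lambda(B_1)/c^2$.

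I do not expect a serious obstacle here: the whole work has already been done in Corollary \ref{coro:kroffo}, which in turn rests on Theorem \ref{prop:meas-density-ball}. The only mildly delicate point is bookkeeping of the constants, namely checking that the two applications of Corollary \ref{coro:kroffo} (with $p=q=1$ and with $p=q=2$) produce constants depending only on $N$, $t_0$, $\theta$, $\kappa_1$, $\kappa_2$, and that the $\kappa_2$-independence in the case $t_0=0$ is preserved. Since both applications use the very same measure density data $(r_0,t,\theta^*_{\Omega,r_0}(t))$, this tracking is automatic from the explicit expression \eqref{gamma0} of $\gamma_0$.
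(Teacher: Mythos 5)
Your proposal is correct and coincides with the paper's own argument: both reduce the claim to Corollary \ref{coro:kroffo} with $p=q=1$, use the identity $\lambda_{1,1}(\Omega)=h(\Omega)$ to convert the resulting lower bound into $h(\Omega)\ge c/r_\Omega$, and combine it with the classical upper bound $\lambda(\Omega)\le \lambda(B_1)/r_\Omega^2$ to get $C=\lambda(B_1)/c^2$. The only cosmetic differences are that the paper quotes the upper bound on $\lambda(\Omega)$ directly rather than re-deriving it from Corollary \ref{coro:kroffo} with $p=q=2$, and cites Maz'ya for $\lambda_{1,1}(\Omega)=h(\Omega)$ instead of sketching the coarea argument.
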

\begin{proof}
We can assume that $r_{\Omega} < +\infty$, otherwise the claimed inequality would be trivial. We recall that $\lambda_{1,1}(\Omega)=\lambda_1(\Omega)=h(\Omega)$ (see \cite[Theorem 2.1.3]{Maz} for this fact). Thus, from Corollary \ref{coro:kroffo} with $p=q=1$, we get
\[
h(\Omega)\ge \frac{c}{r_\Omega},
\]
with the constant $c$ depending only on $N,t_0,\theta,\kappa_1$ and $\kappa_2$. By combining this lower bound with  the classical estimate	
\[
\lambda(\Omega)\le \frac{\lambda(B_1)}{r_\Omega^2},
\] 
recalled in the Introduction, we obtain the claimed inequality with
\[
C=\frac{\lambda(B_1)}{c^2}.
\]
This concludes the proof.
\end{proof}
More generally, we recall that we have the following Cheeger--type inequality
\[
\left(\frac{p}{q}\right)^q\,\Big(\lambda_p(\Omega)\Big)^\frac{q}{p}\le \lambda_q(\Omega),\qquad \text{for}\ q>p,
\]
which generalizes \eqref{ciga}, see for example \cite[Proposition 2.2]{BBP}.
With a similar argument as above, we may ``reverse'' this estimate and infer the following equivalence between sharp Poincar\'e constants. 
\begin{corollary}[Generalized Buser--type inequality]
	Let $\Omega \subseteq \mathbb{R}^N$ be an open set with $r_\Omega<+\infty$, satisfying the measure density condition $\theta^*_{\Omega,r_0}(t)>0$, for some $r_0>0$ and $t\ge 0$. Then for every $1\le p<q\le N$ we have
	\[
	\lambda_p(\Omega)>0\qquad \Longleftrightarrow \qquad \lambda_q(\Omega)>0.
	\]
Moreover, if we have
\[
t\le t_0,\qquad\theta^*_{\Omega,r_0}(t)\ge \theta>0\qquad \text{and}\qquad 0<\kappa_1\le \frac{r_0}{r_\Omega}\le \kappa_2,
\] 
then
\[
\lambda_q(\Omega)\le C\,\Big(\lambda_p(\Omega)\Big)^\frac{q}{p},
\]
for some $C=C(N,p,q,t_0,\theta,\kappa_1, \kappa_2)>0$. Finally, if $t_0=0$ the previous constant can be taken independent of $\kappa_2$.
\end{corollary}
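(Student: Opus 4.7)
The plan is to reduce everything to Corollary \ref{coro:kroffo}, applied with the Poincar\'e--Sobolev exponent equal to the gradient exponent, which recovers precisely $\lambda_p(\Omega)$ (respectively $\lambda_q(\Omega)$). Indeed, when we set the subcritical exponent ``$q$'' appearing in that corollary equal to ``$p$'', the power $p-N+N\,p/q$ collapses to $p$, so the two-sided estimate \eqref{twosided_inradius} becomes
\[
\frac{c_p}{r_\Omega^p}\le \lambda_p(\Omega)\le \frac{\lambda_p(B_1)}{r_\Omega^p},
\]
with $c_p=c_p(N,p,t_0,\theta,\kappa_1,\kappa_2)>0$, and independent of $\kappa_2$ if $t_0=0$. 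The analogous estimate holds for $\lambda_q(\Omega)$ (here we use $q\le N$, so that Corollary \ref{coro:kroffo} can be applied with gradient exponent $q$ and Lebesgue exponent $q$). From either of these two-sided bounds, we read off the equivalence
\[
\lambda_p(\Omega)>0 \ \Longleftrightarrow\ r_\Omega<+\infty\ \Longleftrightarrow\ \lambda_q(\Omega)>0,
\]
which settles the first part of the statement.

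For the quantitative inequality, the idea is to chain the lower bound on $\lambda_p$ with the upper bound on $\lambda_q$. From $\lambda_p(\Omega)\ge c_p/r_\Omega^p$ we extract
\[
\frac{1}{r_\Omega}\le \left(\frac{\lambda_p(\Omega)}{c_p}\right)^{1/p}.
\]
For the upper bound on $\lambda_q(\Omega)$, either invoke the upper half of \eqref{twosided_inradius} with $p$ replaced by $q$, or more directly the elementary estimate $\lambda_q(\Omega)\le \lambda_q(B_1)/r_\Omega^q$ recalled in the Introduction (which requires no hypothesis beyond $r_\Omega<+\infty$). Raising the previous display to the power $q$ and inserting it yields
\[
\lambda_q(\Omega)\le \frac{\lambda_q(B_1)}{r_\Omega^q}\le \lambda_q(B_1)\,\left(\frac{\lambda_p(\Omega)}{c_p}\right)^{q/p},
\]
so that the conclusion holds with $C=\lambda_q(B_1)\,c_p^{-q/p}$.

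It remains only to verify the claimed dependence structure of $C$. Since $c_p$ inherits its dependencies from Corollary \ref{coro:kroffo} applied with exponent pair $(p,p)$, it depends only on $N,p,t_0,\theta,\kappa_1,\kappa_2$ (and is independent of $\kappa_2$ when $t_0=0$); multiplication by $\lambda_q(B_1)$ and the exponent $q/p$ bring in an extra dependence on $q$. Hence $C=C(N,p,q,t_0,\theta,\kappa_1,\kappa_2)$, with the same improvement for $t_0=0$. No real obstacle arises: the entire argument is a bookkeeping exercise built on top of Corollary \ref{coro:kroffo}, whose proof in turn is where all the genuine work (measure density $\Rightarrow$ comparison with $r_\Omega$ via Theorem \ref{prop:meas-density-ball}) has already been carried out.
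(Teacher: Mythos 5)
Your proposal is correct and follows exactly the route the paper intends: the paper gives no separate proof of this corollary, saying only ``with a similar argument as above,'' which refers to the proof of the Buser--type inequality (Corollary \ref{coro:buser}) — namely, apply Corollary \ref{coro:kroffo} with exponent pair $(p,p)$ to get $\lambda_p(\Omega)\ge c/r_\Omega^p$ and chain this with the elementary upper bound $\lambda_q(\Omega)\le \lambda_q(B_1)/r_\Omega^q$. Your verification that the exponent $p-N+N\,p/q$ collapses to $p$ when the Lebesgue exponent equals the gradient exponent, and your bookkeeping of the constant's dependencies, are both accurate.
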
	

\subsection{Example: uniform exterior funnel condition}
In this final section, we give a class of open sets which satisfy the measure density condition previously exposed.
\begin{definition}
We fix an exponent $0<\beta\le 1$, two positive real numbers $\delta,h$ and a direction $\omega\in\mathbb{S}^{N-1}$. We call {\it $\beta-$funnel of axis $\omega$ with height $h$ and opening $\delta$} the following set
\[
\mathscr{F}_\beta(\omega;\delta,h)=\{x\in\mathbb{R}^N\,:\,\delta\,|x-\langle x,\omega\rangle\,\omega|^\beta\le \langle x,\omega\rangle\le h \}.
\]
We then say that an open set $\Omega\subsetneq\mathbb{R}^N$ satisfies a {\it uniform exterior $\beta-$funnel condition with height $h_0$ and opening $\delta$} if for every $x\in\partial\Omega$ there exists a direction $\omega_x\in\mathbb{S}^{N-1}$ with the property that
\[
x+\mathscr{F}_\beta(\omega_x;\delta,h_0)\subseteq \mathbb{R}^N\setminus\Omega.
\]
\end{definition}
We have the following technical result, which illustrates concrete cases of applicability of the results of this section.
\begin{lemma}
\label{lm:funnel}
Let $\Omega\subsetneq\mathbb{R}^N$ be an open set satisfying a uniform exterior $\beta-$funnel condition with height $h_0$ and opening $\delta$. Then for 
\begin{equation}
\label{raggiobeta}
r_0=\sqrt{h_0^2+\left(\frac{h_0}{\delta}\right)^\frac{2}{\beta}}\qquad \text{and}\qquad t=(N-1)\,\left(\frac{1}{\beta}-1\right),
\end{equation}
we have
\begin{equation}
\label{lbindex}
\theta^*_{\Omega,r_0}(t)\ge c_{N,\beta}\,\min\left\{\left(\frac{h_0^{1-\beta}}{\delta}\right)^{\frac{1}{\beta}\,\frac{N-1}{\beta}},\,\left(\frac{\delta}{h_0^{1-\beta}}\right)^\frac{1}{\beta}\right\},
\end{equation}
for an explicit constant $c_{N,\beta}$.
\end{lemma}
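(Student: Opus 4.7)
The plan is to fix an arbitrary boundary point $x \in \partial \Omega$ and a radius $r \in (0, r_0]$, and produce a lower bound on $(r_0/r)^t\,|B_r(x)\setminus\Omega|/|B_r(x)|$ that is uniform in both choices. After translating so that $x$ becomes the origin, the funnel hypothesis yields a direction $\omega \in \mathbb{S}^{N-1}$ such that $\mathscr{F}_\beta(\omega;\delta,h_0)\subseteq \mathbb{R}^N\setminus\Omega$. Therefore, for any truncation height $z_0\in[0,h_0]$ with $\mathscr{F}_\beta(\omega;\delta,z_0)\subseteq B_r$, we get
\[
|B_r\setminus\Omega|\ge |\mathscr{F}_\beta(\omega;\delta,z_0)|,
\]
and I am free to optimize over such $z_0$.

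The key ingredient is the explicit volume of a truncated funnel. Integrating $(N-1)$-dimensional cross-sections (each an $(N-1)$-ball of radius $(z/\delta)^{1/\beta}$) in cylindrical coordinates adapted to $\omega$ gives
\[
|\mathscr{F}_\beta(\omega;\delta,z_0)|=\frac{\omega_{N-1}}{(N-1)/\beta+1}\,\delta^{-(N-1)/\beta}\,z_0^{(N-1)/\beta+1}.
\]
The scaling choice I would make is $z_0=(r/r_0)\,h_0$. The containment $\mathscr{F}_\beta(\omega;\delta,z_0)\subseteq B_r$ follows from the inhomogeneous scaling inequality $(r/r_0)^{2/\beta}\le (r/r_0)^2$, which is valid precisely because $\beta\le 1$ and $r\le r_0$: the maximum distance to the origin inside the truncated funnel is
\[
\sqrt{z_0^{2}+(z_0/\delta)^{2/\beta}}\le \frac{r}{r_0}\,\sqrt{h_0^{2}+(h_0/\delta)^{2/\beta}}=r,
\]
by the very definition of $r_0$ in \eqref{raggiobeta}.

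Substituting into the ratio and exploiting the algebraic identity $(N-1)/\beta+1-N=t$, all powers of $r$ cancel and I obtain a lower bound independent of $r$:
\[
\left(\frac{r_0}{r}\right)^{t}\frac{|B_r\setminus\Omega|}{|B_r|}\ge \frac{C_{N,\beta}\,h_0^{(N-1)/\beta+1}}{\omega_N\,\delta^{(N-1)/\beta}\,r_0^{N}}.
\]
To finish, I estimate $r_0^{N}\le 2^{N/2}\,\max\{h_0^{N},(h_0/\delta)^{N/\beta}\}$ directly from \eqref{raggiobeta}, and split into the two cases according to which term dominates (equivalently, whether $\delta\ge h_0^{1-\beta}$ or $\delta\le h_0^{1-\beta}$). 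In each case, the resulting expression cleans up to one of the two members of the minimum appearing in \eqref{lbindex}.

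The only subtle point I expect is the exponent bookkeeping in the final simplification, namely verifying that the two regimes assemble into the symmetric $\min$ expression of the statement and that no hidden dependence on $h_0$ or $\delta$ has been absorbed into the constant. Conceptually the argument is a single volume comparison, but the presence of two competing length scales ($h_0$ in the axial direction and $(h_0/\delta)^{1/\beta}$ in the transverse direction) forces one to carry out the case analysis carefully. Since \eqref{lbindex} is bounded by $1$ whenever $h_0^{1-\beta}/\delta\le 1$, there is in fact some latitude in the exponent $(N-1)/\beta^{2}$ on the first term, and the above scheme even delivers a slightly sharper version if desired.
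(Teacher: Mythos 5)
Your proof is correct and follows essentially the same route as the paper: both arguments inscribe a truncated funnel of height comparable to $r$ into $B_r(x)$, use the same Fubini computation $|\mathscr{F}_\beta(\omega;\delta,z_0)|=\frac{\omega_{N-1}\,\beta}{N-1+\beta}\,\delta^{-\frac{N-1}{\beta}}\,z_0^{\frac{N-1+\beta}{\beta}}$, exploit the identity $\frac{N-1+\beta}{\beta}-N=t$ to cancel the powers of $r$, and conclude with the same two-regime case analysis in $h_0^{1-\beta}/\delta$. The only (cosmetic) difference is that you choose the truncation height $z_0=(r/r_0)h_0$ and verify the containment directly via $s^{2/\beta}\le s^2$ for $s\le 1$, whereas the paper truncates at the exact height $\psi^{-1}(r)$ with $\psi(h)=\sqrt{h^2+(h/\delta)^{2/\beta}}$ and then bounds $\psi^{-1}(r)$ linearly from below; your choice slightly streamlines the bookkeeping and, as you note, yields a marginally sharper exponent on the first term of the minimum.
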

\begin{proof}
For every $h\in(0,h_0]$, we define the radius
\[
r=\psi(h)=\sqrt{h^2+\left(\frac{h}{\delta}\right)^\frac{2}{\beta}}.
\] 
\begin{figure}
\includegraphics[scale=.3]{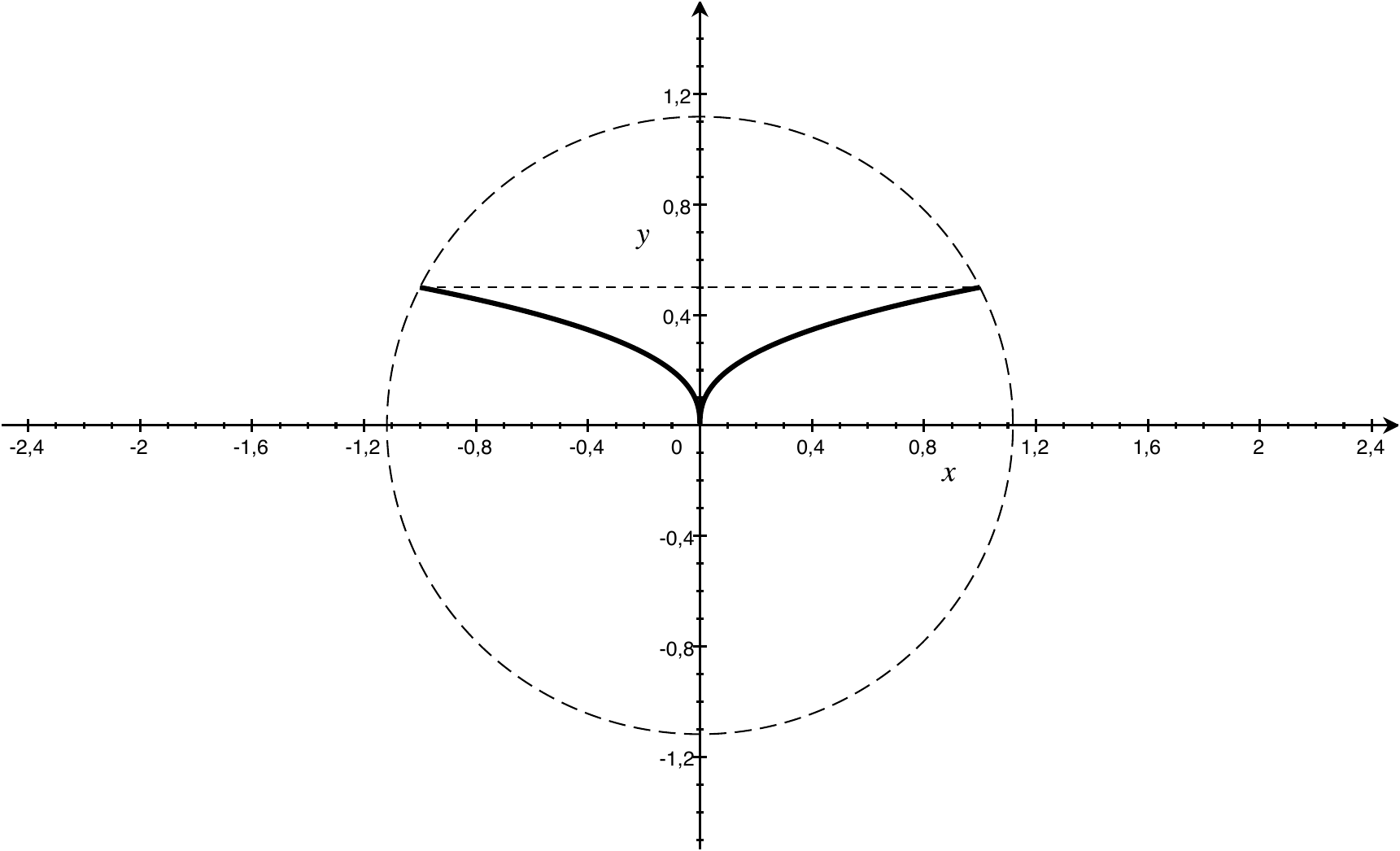}
\caption{In bold line, the profile of a $\beta-$funnel, with $\beta=2/5$, opening $\delta=1/2$ and height $h_0=1/2$. The circle in dashed line has radius $r_0$ given by \eqref{raggiobeta}.}
\end{figure}
Observe that this establishes a bijection between the intervals $(0,h_0]$ and $(0,r_0]$, where we set for simplicity $r_0:=\psi(h_0)$.
Thus, for every $0<r\le r_0$ we have
\[
\mathscr{F}_\beta(\omega;\delta,\psi^{-1}(r))\subseteq B_r(0),\qquad \text{for every}\ \omega\in\mathbb{S}^{N-1}.
\]
In particular, for every $x\in\partial\Omega$ and every $0<r\le r_0$ by definition we have 
\begin{equation}
\label{chuck}
|B_r(x)\setminus\Omega|\ge |(x+\mathscr{F}_\beta(\omega_x;\delta,\psi^{-1}(r)))\setminus\Omega|=|\mathscr{F}_\beta(\mathbf{e}_N;\delta,\psi^{-1}(r))|.
\end{equation}
The last volume can be computed by using Fubini's Theorem 
\begin{equation}
\label{chuck2}|\mathscr{F}_\beta(\mathbf{e}_N;\delta,\psi^{-1}(r))|=\frac{\omega_{N-1}}{\delta^\frac{N-1}{\beta}}\,\int_0^{\psi^{-1}(r)} \tau^\frac{N-1}{\beta}\,d\tau=\frac{\omega_{N-1}}{\delta^\frac{N-1}{\beta}}\,\frac{\beta}{N-1+\beta}\,\Big(\psi^{-1}(r)\Big)^\frac{N-1+\beta}{\beta}.
\end{equation}
In order to give a lower bound for the last term, we observe that 
\[
\psi(h)=\sqrt{h^2+\left(\frac{h}{\delta}\right)^\frac{2}{\beta}}\le \max\left\{1,\left(\frac{h_0^{1-\beta}}{\delta}\right)^{\frac{1}{\beta}}\right\}\,\sqrt{2}\,h,\qquad\text{for}\ 0<h\le h_0,
\]
so that
\[
h\le\psi^{-1}\left(\max\left\{1,\left(\frac{h_0^{1-\beta}}{\delta}\right)^{\frac{1}{\beta}}\right\}\,\sqrt{2}\,h\right),\qquad\text{for every}\ 0<h\le h_0.
\]
By introducing
\[
s=\max\left\{1,\left(\frac{h_0^{1-\beta}}{\delta}\right)^{\frac{1}{\beta}}\right\}\,\sqrt{2}\,h\in \left(0,\max\left\{1,\left(\frac{h_0^{1-\beta}}{\delta}\right)^{\frac{1}{\beta}}\right\}\,\sqrt{2}\,h_0\right],
\]
in particular, we get
\[
\psi^{-1}(s)\ge \frac{s}{\sqrt{2}}\,\frac{1}{\max\left\{1,\left(\dfrac{h_0^{1-\beta}}{\delta}\right)^{\frac{1}{\beta}}\right\}},\qquad \text{for every}\ s \in  \left(0,\max\left\{1,\left(\frac{h_0^{1-\beta}}{\delta}\right)^{\frac{1}{\beta}}\right\}\,\sqrt{2}\,h_0\right].
\]
By observing that $r_0$ belongs to the last interval\footnote{Indeed, from the above discussion we have
\[
r_0=\psi(h_0)\le \max\left\{1,\left(\frac{h_0^{1-\beta}}{\delta}\right)^{\frac{1}{\beta}}\right\}\,\sqrt{2}\,h_0.
\]}, for every $0<r\le r_0$ we thus get from \eqref{chuck} and \eqref{chuck2}
\[
\begin{split}
\left(\frac{r_0}{r}\right)^{(N-1)\,\left(\frac{1}{\beta}-1\right)}\,\dfrac{|B_r(x)\setminus \Omega|}{|B_r(x)|}&\ge c_{N,\beta}\,\left(\sqrt{h_0^2+\left(\frac{h_0}{\delta}\right)^\frac{2}{\beta}}\right)^{(N-1)\,\left(\frac{1}{\beta}-1\right)}\\
&\times\frac{1}{\delta^\frac{N-1}{\beta}}\,\left(\min\left\{1,\left(\dfrac{\delta}{h_0^{1-\beta}}\right)^{\frac{1}{\beta}}\right\}\right)^\frac{N-1+\beta}{\beta},
\end{split}
\]
where 
\[
c_{N,\beta}=\frac{\omega_{N-1}}{\omega_N}\,\frac{\beta}{N-1+\beta}\,\left(\frac{1}{\sqrt{2}}\right)^\frac{N-1+\beta}{\beta}.
\]
We now observe that 
\[
\sqrt{h_0^2+\left(\frac{h_0}{\delta}\right)^\frac{2}{\beta}}=h_0\,\sqrt{1+\left(\frac{h_0^{1-\beta}}{\delta}\right)^\frac{2}{\beta}},
\]
thus from the previous estimate we get
\[
\begin{split}
\theta^*_{\Omega,r_0}(t)&\ge c_{N,\beta}\,\left(\sqrt{1+\left(\frac{h_0^{1-\beta}}{\delta}\right)^\frac{2}{\beta}}\right)^{(N-1)\,\left(\frac{1}{\beta}-1\right)}\,\left(\frac{h_0^{1-\beta}}{\delta}\right)^\frac{N-1}{\beta}\,\left(\min\left\{1,\left(\dfrac{\delta}{h_0^{1-\beta}}\right)^{\frac{1}{\beta}}\right\}\right)^\frac{N-1+\beta}{\beta}.
\end{split}
\]
We now get the claimed lower bound by simply using that $\sqrt{1+\tau^2}\ge |\tau|$.
This concludes the proof.
\end{proof}

\begin{remark}
We observe that for $\beta=1$, the uniform exterior $\beta-$funnel condition boils down to a uniform exterior {\it cone} condition. In this case, in the previous lemma we have $t=0$ and the condition $\theta^*_{\Omega,r_0}(0)>0$ amounts to the classical measure density condition
\[
\inf\left\{\dfrac{|B_r(x)\setminus \Omega|}{|B_r(x)|}\, : x\in \partial \Omega,\ 0 < r \le r_0\right\}>0.
\]
The lower bound in \eqref{lbindex} depends in this case only on the opening of the cone $\delta$, as it is natural.
\end{remark}

\medskip

\end{document}